\newtheorem{theorem}{Theorem}
\newtheorem{lemma}[theorem]{Lemma} 
\newtheorem{remark}[theorem]{Remark}
\newtheorem{corollary}[theorem]{Corollary}
\newcommand{ \Pcal }[1]{\mathbb{P}^{{#1}}}	
\newcommand{ \Mcal }[1]{\mathbb{M}^{{#1}}}	
\newcommand{ \Scal }{\mathbb{S}}			
\newcommand{ \elem }[2]{{e_{ ({#1},{#2})}}} 	
\newcommand{ \Qcal }{\mathcal{Q}}			
\newcommand{ \Acal }{\mathcal{A}}			
\newcommand{ \Ical }{\mathcal{I}}			
\newcommand{\Acalvp}{\mathcal{A}_{\mbox{\scriptsize$\mathfrak{f}$}}}
\newcommand{\Icalvp}{\mathcal{I}_{\mbox{\scriptsize$\mathfrak{f}$}}}
\newcommand{ \Lcal }{{L}}
\newcommand{ \Dcal }{\mathcal{D}}			
\newcommand{ \Bcal }{\mathcal{B}}
\newcommand{ \C }{\mathbb{C}}				
\newcommand{ \F }{\mathbb{F}}				
\newcommand{ \Cnn}{\C^{n\times n}}
\newcommand{ \eR }{\overline{\mathbb{R}}}		
\newcommand{ \R }{\mathbb{R}}			 	
\newcommand{ \B }{\mathbb{B}}			 	
\newcommand{ \N }{\mathbb{N}}				
\newcommand{ \E }{\mathbb{L}}				
\newcommand{\LBB}{\mathbb{L}}
\newcommand{ \dom }{\mathrm{dom}\,}			
\newcommand{ \epi }[1]{\mathrm{epi}({#1})}		
\newcommand{ \cone }{\mathrm{cone}\,}			
\newcommand{ \interior }{ \mathrm{int}\,}		
\newcommand{ \conv }{\mathrm{conv}\,}		
\newcommand{ \rspan }{\mathrm{rspan}\,}		
\newcommand{ \tr }{\mathrm{tr}\,}  			
\newcommand{ \Diag}{\mathrm{Diag}\,}		
\newcommand{ \ip }[2]{\left\langle{{#1},{#2}}\right\rangle} 	
\newcommand{\cip}[2]{\ip{#1}{#2}^{\!\scriptstyle{\mathfrak{c} }}}	
\newcommand{ \Realpart }{\mathrm{Re}}					
\newcommand{ \abs}[1]{\left\vert{#1}\right\vert}				
\newcommand{ \norm }[1]{\left\Vert{#1}\right\Vert}			
\newcommand{ \sd }{\partial}			
\newcommand{ \rsd }{\hat\partial}		
\newcommand{ \dee }{ \mathrm{d} }  		
\newcommand{\sff}{{\mathbf{f}}}	
\newcommand{\sfr}{{\mathbf{r}}}	
\newcommand{\sfi}{\mathsf{i}}			
\newcommand{ \til }[1]{ \tilde{#1}}
\newcommand{\vn}{\vskip0.1truein \noindent}
\newcommand{\eps}{\varepsilon}
\newcommand{\lam}{\lambda}
\newcommand{\Lam}{\Lambda}
\newcommand{\lamo}{{\lam_0}}
\newcommand{\om}{\omega}		
\newcommand{\ka}{\kappa}			
\newcommand{\gam}{\gamma}		
\newcommand{\vp}{ {\mbox{\large$\mathfrak{f}$} }} 		
\newcommand{\sig}{\sigma}		
\newcommand{\Tau}{\mathcal{T}}
\newcommand{\ds}{\displaystyle}
\newcommand{\ssub}[1]{{\scriptscriptstyle{#1}}}
\newcommand{\set}[2]{\left\{#1\, \left\vert\, #2\right.\right\}}
\DeclareMathOperator*{\toarrow}{\longrightarrow}
\newcommand{\map}[3]{#1\,:\, #2\rightarrow #3}
\newcommand{\wt}[1]{{\widetilde{#1}}}
\newcommand{\wh}[1]{{\widehat{#1}}}
\newcommand{\tlam}{\tilde{\lam}}
\newcommand{\adj}{\star}
\newcommand{\simplex}{\bigtriangleup}
\newcommand{\no}{n_0} 	 
\newcommand{\tiln}{\tilde n}
\newcommand{\Gee}{G}
\newcommand{\tilp}{{\tilde p}}
\newcommand{\tXi}{\wt\Xi}
\newcommand{\tX}{{\wt{X}}}
\newcommand{\wc}[1]{\check{#1}}
\title{Variational Analysis of Convexly Generated Spectral Max Functions}
\date{August 15, 2015}
\author{James V.~Burke}\thanks{The work of James V.~Burke is partially supported by NSF grant DMS-1514459.}
\address{University of Washington, Department of Mathematics, Box 354350
 Seattle, Washington 98195-4350  (jvburke@uw.edu)}
\author{Julia Eaton}
\address{University of Washington Tacoma, School of Interdisciplinary Arts and Sciences, Box 358436, Tacoma, WA 98402 (jreaton@uw.edu)}
\begin{document}
\maketitle
\begin{abstract}\noindent
The spectral abscissa is the largest real part of an eigenvalue of a matrix and the spectral radius is the largest modulus. 
Both are examples of spectral max functions---the maximum of a real-valued function over the spectrum of a matrix.
These mappings arise in the control and stabilization of dynamical systems.
In 2001, Burke and Overton characterized the regular subdifferential of the 
spectral abscissa and showed that the spectral abscissa is subdifferentially regular in the sense of Clarke when all active eigenvalues are nonderogatory.
In this paper we develop new techniques to obtain these results for the more general class of convexly generated spectral max functions.
In particular, we extend the Burke-Overton subdifferential regularity result to this class.
These techniques allow us to obtain new variational results for the spectral radius.  
\medskip

\noindent
Dedicated to R. Tyrrell Rockafellar on the occasion of his 80th birthday. 
We gratefully acknowledge Terry's leadership, guidance and friendship.

\medskip
\noindent
{\bf Key words.} spectral function -- spectral max functions -- spectral radius -- spectral abscissa -- subdifferential regularity -- variational analysis -- chain rule
\end{abstract}

\section{Introduction}
\label{intro}
\noindent
A spectral function maps the space of complex $n$-by-$n$ matrices, $\Cnn$, to the extended reals $\eR:=\R\cup\{+\infty\}$
and depends only on the eigenvalues of its argument independent of permutations \cite{burke-overton:01b,lewis:99a}.
Given $f:\C \to \eR$,
the  {\it spectral max function $\vp: \Cnn \to \eR$ generated by $f$} is 
	\begin{eqnarray}
	\label{eqn: vp defn - original}
		\vp(X) 
		:= \max\{ f(\lam) \, | \,  \lam \in \C  \mbox{ and  } \det(\lam I - X)=0 \}.
	\end{eqnarray}
Spectral max functions are continuous if the underlying function $f$ is continuous; however, they are generally not Lipschitz continuous. 
Two important spectral max functions are the spectral abscissa and spectral radius, obtained
by setting 
	\( f(\cdot) = \Realpart(\cdot) \) and  \( f(\cdot) = \abs{\cdot} , \)
 respectively.
These functions are  connected to classical notions of stability for continuous 
and discrete dynamical systems. In recent years, much research has been 
dedicated to 
understanding the variational properties of the spectral abscissa 
\cite{burke-henrion-lewis-overton:06a,burke-lewis-overton:00a,burke-lewis-overton:01a,burke-lewis-overton:02a,burke-lewis-overton:03a,burke-overton:94a,burke-overton:01b,lewis:99a,lewis:03a}.
Burke and Overton \cite{burke-overton:01b} develop a formula for the regular subdifferential of the spectral abscissa and establish its subdifferential regularity 
(see \S\ref{sec: variational analysis background})
on the set of nonderogatory matrices.
Subdifferential regularity allows one to exploit a powerful subdifferential calculus
for such functions in order to describe their underlying variational properties. 
A primary goal of this paper is to extend the Burke-Overton subdifferential regularity result to 
the class of convexly generated spectral max functions defined below (see Theorem~\ref{main result}).

The variational analysis of spectral max functions builds on that
for \textit{polynomial root max functions} developed in
\cite{burke-lewis-overton:04a,burke-lewis-overton:05a,burke-overton:01a}.
Let $\Pcal{n}$ denote the set of complex polynomials of
degree $n$ or less in a single variable over $\C$,
and let $\Pcal{[n]}$ be those polynomials of degree precisely $n$. 
The polynomial root max function generated by $f$ is the mapping
	$\sff: \Pcal{n} \to \eR$ 
defined by 	
	\begin{equation}
	\label{defn:prmf}
	  \sff(p) := \max\{ f(\lam) \, | \, \lam \in \C \mbox{ and }  p(\lam) = 0 \} .  
	 \end{equation}
We say that $\sff$ is \textit{convexly generated} if
the generating function $f$ is proper, convex, and lsc, which we assume throughout.

The results in \cite{burke-lewis-overton:04a,burke-lewis-overton:05a,burke-overton:01a}
are extended in \cite{burke-eaton:12},
where, in particular, the subdifferential regularity of convexly generated polynomial root max functions
is established.
Subdifferential formulas are given when
$f$ satisfies one of the following two conditions
(introduced in \cite{burke-lewis-overton:05a}) at all ``active roots" (see \S\ref{sec: active roots}):
		\begin{gather}
		\label{f-twice-diffable-and-psd}
			\begin{array}{c}
			 \mbox{``$f$ is quadratic, or }
			\mbox{$f$ is $\mathcal{C}^2$ and positive definite at $\lam$"}			  
			\end{array}
			\\
			\label{rspan is C}
			``\rspan(\sd f(\lam)) = \C"
		\end{gather}
where $f'' (\lam;\cdot,\cdot)$ is defined in \eqref{real sense derivatives},
$\sd f(\lam)$ is the usual subdifferential from convex analysis,
and, for $S \subset \C$, 
	\(  \rspan(S) := \set{\tau \zeta}{ \tau \in \R, \ \zeta \in S}  \) 
is the 
\emph{real linear span} of the set $S$.
Observe that condition \eqref{rspan is C} can only hold when $f$ is not  differentiable.
Consequently, \eqref{f-twice-diffable-and-psd} and \eqref{rspan is C} are mutually exclusive.

Following \cite{lewis:99a}, the authors of \cite{burke-overton:01b} directly derive
necessary conditions on the matrix entries of a subgradient for the 
spectral max function (see Theorems~\ref{theorems A B} and ~\ref{thm: section 6}) 
using the permutation invariance of spectral max functions and the Arnold form (see \S\ref{sec: arnold}).
In the case of the spectral abscissa, \cite{burke-overton:01b} establishes a formula for the regular subdifferential using a chain rule applied to the representation
	\( \vp = \sff \circ \Phi_n\),
where \( f(\cdot) = \Realpart(\cdot) \) and
	\( \map{\Phi_n}{\Cnn}{\Pcal{[n]}} \) 
is the  characteristic polynomial map
	\begin{equation}
	\label{eqn: Phi defn}
	 \Phi_n(X) (\lam):= \det(\lam I - X).    
	 \end{equation}
However, since the mapping $\Phi_n$ contains no information on Jordan structure,
this approach does not provide a direct path
to showing that the spectral abscissa inherits the subdifferential regularity of the 
polynomial abscissa in the nonderogatory case.
Instead, subdifferential regularity is established using a non-constructive argument and specialized matrix techniques.
This approach applies to affinely generated spectral max functions, but
all efforts to extend them to general convexly generated spectral max functions have been persistently rebuffed.  
In this paper, we show that an approach based on a more refined use of Arnold form developed in \cite[Theorems 2.2, 2.7]{burke-lewis-overton:01a} can succeed when all active eigenvalues are nonderogatory.

Two approaches for extending the variational theory to convexly generated spectral max functions are presented. 
The first uses the Arnold form (see Theorem~\ref{thm: Arnold Form})
to develop a framework for the application of a standard chain rule \cite[Theorem 10.6]{rw:98a} to simultaneously establish both a formula for the 
subdifferential and subdifferential regularity when all active eigenvalues are nonderogatory (see Theorem~\ref{thm:vp is subdifferentially regular}). 
The second extends the methods in \cite{burke-overton:01b} to develop a formula for the set of regular subdifferentials without a nonderogatory assumption (see Theorem~\ref{thm:vp is subdifferentially regular}). 
However, we again emphasize that the second approach does not provide a path toward establishing subdifferential regularity when $f$ is not affine.

The method of proof for the first approach builds on tools developed in \cite{burke-lewis-overton:01a,burke-overton:01b}.
For $\tiln\le n$, we exploit the fact that a polynomial $\tilp\in\Pcal{\tiln}$ has a local factorization in $\Pcal{\tiln}$ based at its roots (see \eqref{eqn: F sub p defn}). 
This gives rise to what we call the factorization space $\Scal_{\tilp}$
and an associated diffeomorphism $F_\tilp:\Scal_{\tilp} \to \Pcal{\tiln}$ (see \S\ref{factorization space section}).
The key new ingredient is 
our mapping $\map{G}{\C \times \Cnn}{\Scal_{\tilp}}$ (see \eqref{eqn: big G defn}), which takes a matrix $\tX\in\Cnn$ to the ``active factor" (of degree $\tiln \leq n$) 
associated with its characteristic polynomial $\Phi_n(\tX)$ (see \eqref{active roots} and \eqref{active factor defn}).
The resulting framework is described by the following diagram.
	\begin{equation}
	\label{diagram}
	\xymatrix{
	  \C^{n\times n}  \ar[d]_{\Gee(0,\cdot)} \ar[r]^{\mathfrak{f}} & \eR  \\ 
	  \Scal_{\tilp} \ar[r]_{F_{\tilp}}     & \Pcal{[\tiln]}  \ar[u]^{\sff} }
	\end{equation}
The majority of the paper is devoted to developing the variational properties of the mappings described in this diagram.

Numerous issues need to be addressed in order to place previous work into a comprehensive framework within which modern techniques of nonsmooth analysis apply. 
The new framework allows us to extend our knowledge of the variational properties of the spectral abscissa to convexly generated spectral max functions.
Of the several technical refinements/advances needed for the development of this framework, we describe
three of particular significance.
First, specialized notions of differentiability are required since the fields on the domain and range of $\vp$ differ (see \S\ref{Derivatives}).
Second, inner products are developed that exploit the local geometry of the spaces  
	$\Cnn$, $\Pcal{\tiln}$, and $\Scal_{\tilp}$.
These inner products give convenient representations for 
the derivatives of the mappings $G$ and $F_\tilp$ and their adjoints (see \S\ref{inner products}).
Finally, and most importantly, since $\vp$ locally depends only on those eigenvalues that are ``active" (see \S\ref{sec: nonderogatory case}), the mapping $G$ is introduced to
focus the analysis on the eigenspace determined by the active eigenvalues alone, 
thereby preserving the variational structure 
required for the application of a standard nonsmooth chain rule. 

The paper is organized as follows. 
In Section~\ref{background}, 
we build notation and review the necessary background.
In Section~\ref{sec: background polynomial structures} 
we review the polynomial results from 
\cite{burke-eaton:12} and  \cite{burke-overton:01a}.
In Section~\ref{sec: derivative of characteristic factor} we recall  
the Arnold form and use it to develop a representation for the derivative of the 
factors of the characteristic polynomial corresponding to nonderogatory eigenvalues.
In Section~\ref{sec: nonderogatory case}, we derive formulas for the subdifferential and horizon subdifferential of 
	$\vp$ 
at 
	$\wt X$ 
when 
	$\wt X$ 
has nonderogatory active eigenvalues and show the subdifferential regularity of 
	$\vp$ at $\wt X$. 
In Section~\ref{sec: general case}, we derive the regular subdifferential of $\vp$ 
at an arbitrary $\wt X \in \Cnn$ 
and show that $\vp$ is subdifferentially
regular at $\wt X$ if and only if the active eigenvalues of $\wt X$ are nonderogatory.
In Section~\ref{sec: radius} we apply these results to the spectral radius and illustrate them with two examples.


\begin{center}
\begin{tabular}{ll  }
{\bf Section}		& {\bf Notation}    		 			  		 \\ 
\ref{intro}			& $\eR$, \  $f$, \ $\sff$, \ $\vp$, \ $\mathrm{rspan}$, \ $\Phi_n$, $\Pcal{n}$, \ $\Pcal{[n]}$  \\
\ref{inner products}   & $(\LBB, \F, \ip{\cdot}{\cdot}),\ \cip{\cdot}{\cdot}$, \ 
$L^\adj$, \ $\cip{\cdot}{\cdot}_{\C},\ \cip{\cdot}{\cdot}_{\Cnn}$ \\
\ref{Derivatives}	& $\F$-differentiable, \ $h'(x)$, \ $\nabla h(x)$, \ $f'(\zeta;\delta)$, \  $f''(\zeta;\delta, \delta)$	 \\
\ref{sec: variational analysis background}	
				&	$\R_{++}$, \ $\R_+$, \ $S^\infty$, \ $\dee h(x)$, \ $\rsd h(x)$,  \ $\sd h(x)$, \ $\sd^\infty h(x)$, \ $h'(x;v)$ \\
\ref{factorization space section} 
				& $\Mcal{\tiln}$, \ $\elem{\ell}{\lamo}$, \ $\Scal_{\tilp}$, \ $F_{\tilp}$, \ $\tau_{(k,\lamo)}$, \ $\Tau_{\tilp}$, \ $\ip{\cdot}{\cdot}_{\Scal_\tilp}$, \ $\ip{\cdot}{\cdot}_{(\Pcal{\tiln},\tilp)}$ \\
\ref{sec: active roots} 
				& $\Acal_{\sff}(p)$, \ $\Ical_{\sff}(p)$ \\
\ref{sec: arnold} 	& $\tXi$, \ $\wt P$, \ $\wt B$, \ $J_j$, \ $J_j^{(k)}$, \ $m_{jk}$,  \ $I_{m_{jk}}$, \ $N_{jk}$, \ $q_j$,
					 \ $m_j$, \ $\tiln$,\ $\no$, \ $\lam_{js}(X)$  \\ 
\ref{subsec:char poly} 
				& $g_j(X)$,  \ $\widehat{J}_j(X)$,\  $\Gee(\zeta,X)$, \ $R(v)$
\end{tabular} 
\end{center}

\section{Background}
\label{background}
We review 
	inner product spaces,
	differentiability with respect to a field,
	and notation and definitions from variational analysis.

\subsection{Inner products}
\label{inner products}
A linear space $\LBB$ over the field $\F$ with real or complex inner product $\ip{\cdot}{\cdot}$ is called
an inner product space, denoted by the triple $(\LBB, \F, \ip{\cdot}{\cdot})$.
Only the complex $\C$ and real $\R$ fields are considered.
When it is important to emphasize that the inner product is complex, we write  $\cip{\cdot}{\cdot}$.
A complex inner product space $(\LBB, \C, \cip{\cdot}{\cdot})$
can also be viewed as a real inner product space $(\LBB, \R, \ip{\cdot}{\cdot})$   
with real inner product
	$\ip{\cdot}{\cdot} := \Realpart(\cip{\cdot}{\cdot})$.
This distinction is important when considering linear transformations and their adjoints, 
since both depend upon the choice of the underlying field. 
Recall that if $\dim(\LBB, \C, \cip{\cdot}{\cdot})=n$, then $\dim(\LBB, \R, \ip{\cdot}{\cdot})=2n$.

Let $\F_0, \F_1, \F_2$ be the fields $\C$ or $\R$ with $\F_0$ a subfield of both $\F_1$ and $\F_2$.
For $i=1,2$, let
	$(\LBB_i, \F_i, \ip{\cdot}{\cdot}_i )$,  
be finite-dimensional inner product spaces.
Note that both $\LBB_1$ and $\LBB_2$ are inner product spaces over $\F_0$ with the appropriate 
$\F_0$ inner product.
A transformation  
	$\Lcal: \LBB_1\to \LBB_2$
is said to be $\F_0$-linear  if for all 
	$\alpha, \beta \in \F_0$, 
	$x,y\in\LBB_1$,
we have 
	\( \Lcal(\alpha x + \beta y) = \alpha \Lcal(x) + \beta \Lcal(y).  \)
The adjoint mapping $\Lcal^\adj: \LBB_2\to \LBB_1$ 
\cite{Lax} is the unique $\F_0$-linear transformation satisfying
	\begin{equation}
	\label{adjoint defn}
		\mathrm{Re}(\ip{x}{\Lcal y}_2 )= \mathrm{Re}(\ip{\Lcal^\adj x}{y}_1) \quad \forall \ (y,x) \in \LBB_1 \times \LBB_2.
	\end{equation}
As given, this definition appears insufficient since the $\F_0$ inner product is not specified in \eqref{adjoint defn};
however, this is resolved by noting that the imaginary part of a complex inner product 
can be obtained from knowledge of only its real part 
since 
	\( \mathrm{Re}(\cip{\sfi x}{y}) = \mathrm{Im}(\cip{x}{y}) \), 
where $\sfi := \sqrt{-1}$.
That is, the adjoint can be obtained using only the real part of the $\F_0$ inner product.

It is essential to be mindful of the distinction between the 
adjoint of a linear operator and the Hermitian adjoint of a matrix.
The adjoint of a linear operator $\Lcal$ is denoted by $\Lcal^\adj$, whereas the
Hermitian adjoint of a matrix $M\in \C^{n\times m}$ is denoted by $M^*$.
For example, the following elementary lemma is key.

\begin{lemma}[Adjoint of a linear functional]
\label{lem: adjoint operator versus matrix}
Let 
	$(\LBB, \F, \ip{\cdot}{\cdot})$ 
be a finite dimensional inner product space where $\F$ is either $\C$ or $\R$.
Given $y \in \LBB$, define $\Lcal: \LBB \to \F$ as the linear functional
	\( \Lcal x := \ip{y}{x} \)
for all $x \in \LBB$.
Then $\Lcal^\adj \zeta  = \zeta y$, where the left-hand side shows the 
action of the adjoint, and the right-hand side is multiplication of $y$ by the scalar $\zeta \in \F$.
\end{lemma}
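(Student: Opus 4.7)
The plan is to write down an explicit candidate for $\Lcal^\adj$, verify that it satisfies the defining identity \eqref{adjoint defn}, and appeal to uniqueness. Define $T:\F\to\LBB$ by $T(\zeta):=\zeta y$. Since scalar multiplication in $\LBB$ is $\F$-linear, $T$ is $\F$-linear, and in particular $\F_0$-linear for any subfield $\F_0\subset\F$. It therefore suffices to show that $T$ satisfies the adjoint relation; then uniqueness of the adjoint (guaranteed by the text following \eqref{adjoint defn}) forces $\Lcal^\adj = T$, which is exactly the statement of the lemma.

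For the verification, fix $x\in\LBB$ and $\zeta\in\F$. Using the standard convention $\ip{a}{b}_\F = \bar a b$ on $\F$ (which reduces to $\zeta\eta$ when $\F=\R$), compute
\[
  \ip{\zeta}{\Lcal x}_\F \;=\; \bar\zeta\,\Lcal x \;=\; \bar\zeta\,\ip{y}{x}.
\]
On the other hand, by conjugate linearity of $\ip{\cdot}{\cdot}$ in its first slot,
\[
  \ip{T(\zeta)}{x} \;=\; \ip{\zeta y}{x} \;=\; \bar\zeta\,\ip{y}{x}.
\]
Thus $\ip{\zeta}{\Lcal x}_\F = \ip{T(\zeta)}{x}$ as full (complex or real) inner products; taking real parts gives precisely \eqref{adjoint defn}.

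There is no real obstacle here: the identity holds at the level of the full inner products, so taking real parts is automatic. The only bookkeeping point worth flagging is that one must apply the convention consistently on both sides—in particular, the conjugation on $\zeta$ coming from the $\F$-inner product must be matched by conjugate linearity of $\ip{\cdot}{\cdot}$ in its first argument when we pull the scalar $\zeta$ out of $\ip{\zeta y}{x}$. Once this is done, the claim $\Lcal^\adj\zeta=\zeta y$ follows immediately from the uniqueness of $\Lcal^\adj$.
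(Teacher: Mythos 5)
Your proof is correct and is exactly the intended argument: the paper states this lemma without proof (as an elementary fact), and the natural justification is precisely what you give—exhibit the candidate $T(\zeta):=\zeta y$, check the defining identity \eqref{adjoint defn} (here even at the level of the full inner products, using the paper's convention $\cip{\xi}{\zeta}_{\C}=\bar\xi\zeta$, conjugate-linear in the first slot), and invoke the uniqueness of the $\F$-linear adjoint. Your bookkeeping remark about matching the conjugation on $\zeta$ with conjugate linearity in the first argument is also the right point to flag, since with the opposite convention the adjoint would instead be $\zeta\mapsto\bar\zeta y$.
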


The next lemma provides a key tool in our construction of inner products, 
which in turn impacts the nature of the adjoint operator.
\begin{lemma}[Inner product construction]
\cite[Lemma 4.1]{burke-eaton:12}
\label{ips and isomorphisms}
Let $\LBB_1$ and $\LBB_2$ 
be finite dimensional vector spaces over $\F = \C$ or $\R$, 
and let 
	$\map{\Lcal}{\LBB_1}{\LBB_2}$ 
be an $\F$-linear isomorphism.
\begin{enumerate}
\item
Suppose $\LBB_2$ has inner product $\ip{\cdot}{\cdot}_2$.  
Then the bilinear form 
	$\map{\Bcal}{\LBB_1 \times \LBB_1}{\F}$ 
given by
	$\Bcal(x, y) := \ip{\Lcal x}{\Lcal y}_2$ 
for all $x,y\in\LBB_1$ is an inner product on $\LBB_1$.
Denote this inner product by 
	$\ip{\cdot}{\cdot}_1:=\Bcal(\cdot, \cdot)$. 
Then the adjoint 
	$\map{\Lcal^\adj}{\LBB_2}{\LBB_1}$ 
with respect to the inner products 
	$\ip{\cdot}{\cdot}_1$ 
and  
	$\ip{\cdot}{\cdot}_2$ 
equals $\Lcal^{-1}$.
\item
 Let 
 	$(\LBB_i,\F, \ip{\cdot}{\cdot}_i)$  
be inner product spaces for $i=1,2$.
If 
	$\ip{x}{y}_1=\ip{\Lcal x}{\Lcal y}_2$
for all  $x,y\in \LBB_1$, then 
	$\Lcal^\adj=\Lcal^{-1}$ 
with respect to these inner products.
\end{enumerate}
\end{lemma}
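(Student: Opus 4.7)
The plan is to prove part (1) directly from the definitions and then obtain part (2) as an immediate corollary, since the hypothesis of (2) is precisely that the given inner product on $\LBB_1$ coincides with the pullback inner product $\ip{\Lcal\cdot}{\Lcal\cdot}_2$ constructed in (1). The single identity that drives the whole argument is
\[ \ip{x}{y}_1 = \ip{\Lcal x}{\Lcal y}_2 . \]

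For part (1), I would first check that $\Bcal(x,y) := \ip{\Lcal x}{\Lcal y}_2$ satisfies the three axioms of an $\F$-inner product on $\LBB_1$. Conjugate symmetry and (sesqui)linearity in the appropriate argument are inherited directly from the corresponding properties of $\ip{\cdot}{\cdot}_2$ together with the $\F$-linearity of $\Lcal$. Positivity $\Bcal(x,x)\geq 0$ is immediate, and the only axiom with genuine content is definiteness: $\Bcal(x,x) = \ip{\Lcal x}{\Lcal x}_2 = 0$ forces $\Lcal x = 0$ and hence $x = 0$, using precisely the injectivity of $\Lcal$ guaranteed by the isomorphism hypothesis.

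Next I would verify the adjoint identity for the candidate $\Lcal^{-1}$. For any $(y,x)\in\LBB_1\times\LBB_2$, a one-line substitution in the newly constructed inner product yields
\[ \ip{\Lcal^{-1} x}{y}_1 = \ip{\Lcal(\Lcal^{-1} x)}{\Lcal y}_2 = \ip{x}{\Lcal y}_2 . \]
Taking real parts produces exactly the defining relation \eqref{adjoint defn}, and the uniqueness of the adjoint noted in the discussion preceding \eqref{adjoint defn} then forces $\Lcal^\adj = \Lcal^{-1}$. Part (2) now requires no additional work: its hypothesis is exactly the pullback identity established in part (1), so the same two-line substitution delivers $\Lcal^\adj = \Lcal^{-1}$ verbatim.

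I do not expect any serious obstacle; the statement is essentially bookkeeping about how an $\F$-linear isomorphism transports an inner product and how adjoints behave under that pullback. The only subtle point worth flagging is the real/complex distinction built into \eqref{adjoint defn}: the equality $\ip{\Lcal^{-1} x}{y}_1 = \ip{x}{\Lcal y}_2$ in fact holds at the level of the full $\F$-inner product, which is strictly stronger than what the adjoint definition demands. Recognizing this ensures that the construction works uniformly in the cases $\F = \R$ and $\F = \C$, without appealing to the ``recover the imaginary part from the real part'' device used elsewhere in this section.
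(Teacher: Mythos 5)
Your proof is correct. Note that the paper itself supplies no proof of this lemma---it is imported verbatim from \cite[Lemma 4.1]{burke-eaton:12}---so there is no in-paper argument to compare against; your route (inner-product axioms via injectivity of $\Lcal$ for definiteness, then the substitution $\ip{\Lcal^{-1}x}{y}_1 = \ip{x}{\Lcal y}_2$ combined with uniqueness of the adjoint in \eqref{adjoint defn}) is the standard one and is exactly what the statement is designed to encode. Your closing observation is also apt: the identity holds for the full $\F$-inner product, not merely its real part, so the argument is uniform in $\F=\R$ and $\F=\C$ without invoking the recovery of the imaginary part from the real part.
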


As a vector space, we endow $\C$ with the
complex  inner product
	$\cip{\xi}{\zeta}_{\C}:=\bar\xi\zeta$. 	
This inner product defines the standard complex inner product on 
	$\C^{n+1}$
by 
	\( \cip{(a_0, a_1, \dots, a_n)}{(b_0, b_1, \dots,b_n)}_{\C^{n+1}}
		 := \sum_{\ell=0}^n \cip{a_\ell}{b_\ell}_{\C}  \)
for all 
	\( a_\ell, b_\ell \in \C\), $\ell=0, \dots, n$. 
We also  work on the space 
	$\Cnn$ 
of complex 
	$n\times n$ 
matrices with complex inner product
	$\cip{X}{Y}_{\Cnn}:=\tr(X^*Y)$. 
We use Lemma~\ref{ips and isomorphisms} to construct further inner products as needed. 
In each case, there is a complex version
and an associated real version 
obtained by taking the real part of the complex inner product with 
	$\ip{\cdot}{\cdot} := \Realpart(\cip{\cdot}{\cdot})$,
where the bilinearity of the real inner product is considered to be over the scalar field $\R$.

\subsection{Derivatives}
\label{Derivatives}
Again let $\F_0, \F_1, \F_2$ be the fields $\R$ or $\C$ with 
	$\F_0 \subset \F_1 \cap \F_2$.
For $i=1,2$, let
	$(\LBB_i, \F_i, \ip{\cdot}{\cdot}_i )$,  
be finite-dimensional inner product spaces
with associated inner product norms 	
	$ \norm{\cdot}_{i}$. 
Let 
	$h: \LBB_1 \to \LBB_2$. 
We say that $h$ is 
\textit{$\F_0$-differentiable} 
at $x \in \LBB_1$ if there exists an $\F_0$-linear transformation, denoted  
	$h'(x): \LBB_1 \to \LBB_2$, 
such that
	\[  h(y) = h(x) + h'(x)( y -  x) + o(\norm{y-x}_{1}),  \]
where 
	\(  \lim_{y \to x} \norm{  o(\norm{y - x}_{1} ) / \norm{y-x}_{1}  }_{2} = 0. \)
Clearly every $\C$-derivative is an $\R$-derivative, but the converse is false.
If $h: \LBB_1 \to \F_0$, $h'(x)$ defines an linear functional from $\LBB_1$ into $\F_0$ and, 
since $\LBB_1$ is a vector space over $\F_0$, there exists 
an element of $\LBB_1$, denoted by $\nabla h(x)$, satisfying 
	\begin{equation}
	\label{nabla notation}
		h'(x)z = 
		\begin{cases}
		\mathrm{Re}\ip{\nabla h(x)}{z}_{1} 	& \mbox{ if $\F_0=\R$ } \\ 
		\ip{\nabla h(x)}{z}_{1}  			& \mbox{ if $\F_0=\C$} \\ 
		\end{cases}
	\qquad \quad\forall \ z \in \LBB_1.
	\end{equation}
We call $\nabla h(x)$ the gradient of $h$ at $x$, 
and recall from our previous discussion that the vector
$\nabla h(x)$ is the same element of $\LBB_1$ regardless
of whether $\F_0$ equals $\C$ or $\R$. 
Lemma~\ref{lem: adjoint operator versus matrix} tells us that  
	\begin{equation}
	\label{adjoint of derivative}
		 h'(x)^\adj \zeta = \zeta \nabla h(x).
	\end{equation}

Since we make extensive use of $\R$-differentiability for real-valued functions on $\C$,
following \cite{burke-overton:01b},
we show how to construct the $\R$-derivative in this case.
Define the $\R$-linear transformation 
	\( \Theta: \R^2 \rightarrow \C \)  
by
	\( \Theta(x_1, x_2)  :=  x_1  +  \sfi x_2 \).
The inverse mapping  is 
	\( \Theta^{-1}(\zeta) = (\mathrm{Re}(\zeta), \mathrm{Im}(\zeta)). \)
Since 
	$\mathrm{Re}(\ip{\zeta}{\Theta(x_1,x_2)}) = \ip{\Theta^{-1}(\zeta)}{(x_1,x_2)}$
for all 
	$\zeta \in \C$ and $(x_1,x_2)\in\R^2$,
we have  $\Theta^\adj = \Theta^{-1}$.
Given $f: \C \to \R$, define 
	\( \tilde{ f } : \R^2 \rightarrow \R \)
by 
	\( \tilde{ f }  :=   f  \circ  \Theta. \)
If $\tilde f$ is differentiable over $\R$ in the usual sense, then, by the chain rule, $f$ is $\R$-differentiable
with gradient 
	\( \nabla f (\zeta)  =  \Theta \nabla \tilde{ f }(\Theta^{-1}\zeta) \), 
and this is consistent with the notation in
\eqref{nabla notation}.
In \cite{burke-lewis-overton:05a} this  
is called \emph{differentiable in the real sense}.
We say that $f$ is \textit{twice $\R$-differentiable} if 
	\( \tilde{ f } \)
is twice differentiable over $\R$ in the usual sense.  
In this case,
a further application of the chain rule yields 
	\( f(\zeta + \delta) = f(\zeta) +  \ip{\nabla f(\zeta)}{\delta} + (1/2)\ip{\delta}{\nabla^2 f(\zeta)\delta}
	+ o(\abs{\delta}^2)\), 
where 
	\( \nabla^2 f (\zeta)  
	:=  \Theta \nabla ^2 
		\tilde{ f }  (\Theta^{-1}\zeta)\Theta^{-1} . \)
Since we only use $\R$-differentiability of functions $f:\C \to \R$,
we simply say that $f$ is differentiable.
We also make use of the following notation: 
	\begin{equation}
	\label{real sense derivatives}
	  f'  (\zeta; \delta) := \ip{ \nabla f (\zeta)}{\delta} 
	  \quad\mbox{and}\quad
	  f'' (\zeta; \delta, \delta)  := \ip{\delta}{ \nabla^2 f (\zeta)\delta},  
	  \end{equation}
where it follows that 
	\( f'  (\zeta; \delta)  =  \lim_{t \downarrow 0} (\,f (\zeta +  t \delta) - f (\zeta ) \, )/ t .\)
We say that 
	$\nabla^2 f (\zeta)$ 
is positive definite if 
	$\ip{\delta}{ \nabla^2 f (\zeta)\delta}>0$ 
for all $\delta \neq 0$. 
We say $f$ is quadratic if $\nabla^2 f (\zeta)$ is constant in $\zeta$,
and we say that $f$ is $\mathcal{C}^2$ at $\tlam$ if the map $\lam \mapsto \nabla^2 f$ is continuous at $\tlam$. 
For example, the function 
	\( r_2(\zeta) := \abs{\zeta}^2/2 \)
used in \cite[Section 7]{burke-eaton:12} and Section~\ref{sec: radius} below is quadratic with 
	\( \nabla r_2 (\zeta) = \zeta \)
and
	\( \nabla^2 r_2 (\zeta) \) being the identity map on $\C$. 
This notation clarifies the key hypotheses 
\eqref{f-twice-diffable-and-psd} and \eqref{rspan is C}.

\subsection{Variational analysis review}
\label{sec: variational analysis background}
We use the techniques of variational analysis from 
\cite{Cla:83,Mord:05,rw:98a}. 
Let 	
	$(\E, \C, \cip{\cdot}{\cdot})$ 
be a finite-dimensional inner product space 
with associated real inner product
	$\ip{\cdot}{\cdot}:=\Realpart \cip{\cdot}{\cdot}$.
Let $C$ be a non-empty subset of $\E$.
The \textit{tangent cone} to $C$ at a point 
	$x  \in  C$
is  
	\begin{equation}
	\label{eqn: tangent cone defn}
		T_C(x) \!:=\! \{  d \,|\, \exists \{x^\nu\}  \!\subset\!  C, 
		\{ t_\nu \}  \!\subset\!  \R_{++} \text{ s.t. } x^\nu  \!\to\!  x,  
		t_\nu  \!\downarrow\!  0 \text{ and } t^{-1}_\nu(x^\nu \!-\! x)\! \to\! d  \},           
	\end{equation}
where $\R_{++}:=(0,\infty)$.
The tangent cone is a closed subset of $\E$ \cite[Proposition 6.2]{rw:98a}.
The \textit{regular normal cone} to $C$ at a point $x\in C$ is 
	\begin{equation}
	\label{eqn: regular normal cone defn}
		\widehat{N}_C(x) 
		:= \{ z \,|\,  \ip{z}{v}  \leq  0 \text{ for all } v  \in  T_C(x) \}. 
	\end{equation}
Given $S\subset\E$, the \textit{horizon cone} of $S$ is
	\[ S^\infty  :=  
	\set{z  \in  \E}{\exists\,\{x^\nu\}  \subset  S,\,  \{t_\nu\}  \subset \R_+\mbox{ s.t. }
	t_\nu  \downarrow  0\mbox{ and } t_\nu x^\nu\rightarrow z},
	\]
where $\R_+:=[0,\infty)$.
The horizon cone is always a nonempty closed cone. 	
If $S$ is convex, $S^\infty$ is the usual \textit{recession cone} 
from convex analysis.
Let 
	\(h : \E \to \overline{\R}\).
The \textit{essential domain} of $h $ is
        \[ \dom(h ) := \{x  \in  \E \,|\,   h (x)  <  \infty \} . \]
The function $h$ is said to be proper if $\dom(h)\ne\emptyset$.
The \textit{epigraph} of $h$ is given by
	\[ \epi{ h } :=  \{ (x, \beta)  \in  \E  \times  \R \,|\, h (x)  \leq  \beta \}. \]
The \textit{subderivative} of $h$ is the map 
	\(\dee h (x): \E \to \overline{\R}\cup\{-\infty\}\)
given by
	\begin{equation*}
	\label{eqn: subderivative defn}
		\dee h (x)(\tilde {v}) := 
		\liminf_{t \downarrow 0, v \to \tilde{v}} \, ( \,  h (x +  t v) - h (x) \, )/ t .
        \end{equation*}
By \cite[Theorem 8.2]{rw:98a}, the subderivative and the tangent
cone to the epigraph 
are related by  
	\(\epi{\dee h (x)}=T_{\epi{h}}(x,h(x))\)
	\(\forall\ x\in\dom(h)\).
The \textit{regular subdifferential} of $h$ at $x \in \dom(h)$ is the set of
\textit{regular subgradients}:
	\[  \rsd h(x)  :=  \{ z \,|\, h(y)  \geq  h(x)  +  \ip{z}{y - x}  +  o(\norm{y - x}) 
	\quad  \forall\,y\in \E\}. \]
The regular subdifferential is a closed, convex subset of $\E$.
By \cite[Theorem 8.9]{rw:98a},
the regular subdifferential and the regular normal cone to the epigraph are related by
	\( \rsd h(x) = \{ {z} \ | \ {(z,-1)\in \widehat N_{\epi{h}}(x,h(x))} \}. \)
The general (or limiting) subdifferential of $h$ at $x$
is given by
	\begin{equation}
	\label{general-sd-formula}
		\sd h (x) :=  
		\left\{
			 z  \left\vert
			\begin{array}{c}
			\exists \,  \{(x^\nu,z^\nu)\}  \subset \dom(h)\times \E, \mbox{ with }\,z^\nu \in \rsd h( x^\nu)\, \forall\, \nu,
			\\  
		 	  (x^\nu,z^\nu) \to (x,z), \mbox{ and }h(x^\nu)\to h(x)  
			\end{array}
			\right.
		\right\},
	\end{equation}
and the horizon subdifferential to $h$ at $x$ is given by
	\begin{equation*}
		\label{horizon-subdifferential}
		\sd^\infty h(x) \!:= \!
		\left \{\! z \left \vert\!
		\begin{array}{c}
	\exists \,  \{(x^\nu,z^\nu,\beta_\nu)\} \! \subset\! \dom(h)\!\times\! \E\!\times\! \R_{++}, 
	\!\mbox{ with }\! z^\nu \!\in\! \rsd h( x^\nu)\, \forall\, \nu,
	\\ 
	\beta_\nu\downarrow 0,\  (x^\nu,\beta_\nu z^\nu) \to (x,z), \mbox{ and }h(x^\nu)\to h(x)
		\end{array}
		\right.\!\!\right \}\!.
	\end{equation*}
The function $h$ is said to be \textit{subdifferentially regular} at $x$ if 
	\( \sd h(x) = \rsd h(x)\)
and 
	\( \sd^\infty h(x) = \rsd h(x)^\infty .\)
Subdifferential regularity 
allows the
development of a rich subdifferential calculus. 

If $h$ is differentiable at $x$, then, by \cite[Exercise 8.8]{rw:98a},
	\[ \rsd h(x) =\sd h(x) = \{\nabla h(  x)\} 
	\quad\mbox{and}\quad
	 \dee h(x)(v) = \ip{\nabla h(x)}{v}. \]
If $h$ is proper, convex, and lsc, then, by \cite[Proposition 8.12]{rw:98a}, 
$h$ is subdifferentially regular at every $x \in \dom(h)$,
in which case 
	\begin{equation}
	\label{subderivative is directional}
		\dee h (x)(v) = h'(x;v) :=
		\lim_{t \downarrow 0}   ( \,  h (x +  t v) - h (x) \, )/ t. 
        \end{equation}
Consequently, in this case, the subderivative corresponds to the usual notion of a directional derivative
\cite[page 257]{rw:98a}.
This notation is consistent with the usage in \eqref{real sense derivatives}, where the function is assumed to be $\R$-differentiable.

\section{Polynomials}
\label{sec: background polynomial structures}

The focus of study in 
\cite{burke-eaton:12,burke-lewis-overton:05a,burke-overton:01a} 
is polynomial root functions,  
a special class of which  is polynomial root max functions 
\eqref{defn:prmf}.
The goal is to apply the variational results for $\sff$ in
\cite{burke-eaton:12,burke-lewis-overton:04a,burke-lewis-overton:05a,burke-overton:01a} to  
	\(  \vp \) 
when 
	$\sff$ 
is convexly generated.
These functions depend on what will be called the active factor (see \S\ref{sec: active roots} and \eqref{active factor defn}). 
In the discussion of the application of the polynomial results to the characteristic polynomial, 
the integer $\tiln \in \N$ is used to denote either the degree of the active factor of the characteristic polynomial 
(the nonderogatory case, \S\ref{sec: nonderogatory case}) 
or the degree of the characteristic polynomial itself (the derogatory case, \S\ref{sec: general case}).

\subsection{Factorization spaces and their inner products on $\Pcal{\tiln}$ \cite{burke-overton:01a}}
\label{factorization space section}

Let $\preceq$ denote the \textit{lexicographical order} on 
$\C$,  where, for 
	\( z_s := x_s +  \sfi y_s,\ x_s,y_s  \in  \R, \) \( s = 1,2,\) 
we have
	\(z_1  \preceq  z_2\) 
if and only if either 
	\(x_1  <  x_2\)
 or 
 	(\(x_1  =  x_2\) 
and 
	\(y_1  \leq  y_2\)).
Let
	$\Mcal{\tiln}\subsetneq \Pcal{[\tiln]}$ 
be the affine set of \emph{monic} polynomials of degree $n$ 
with
	$\Mcal{0} := \{1\}$.
Given 
	$\tilp \in \Mcal{\tiln}$, 
write
	\begin{equation}
	\label{p-factor}
		\tilp:=
		\mbox{$\prod_{j=1}^m$} \elem{n_j}{{\tlam}_j} ,
	\end{equation}
where 
	${\tlam}_1, \dots, {\tlam}_m$
are the distinct roots of $\tilp$, ordered lexicographically with multiplicities 
	$n_1, \dots, n_m$, 
and the monomials 
	\( \elem{\ell}{{\tlam}_j}\in\Pcal{\tiln} \)
are defined by 
	\begin{eqnarray}
	\label{eqn: elementary polynomial}
		\elem{\ell}{\lamo}(\lam) := (\lam - \lamo)^\ell ,\quad\mbox{for \( \ell=0, \dots, \tiln ,\)
		and all $\lamo\in\C$.}
	\end{eqnarray}
Note that, for each fixed
	\(  \lamo \in\C , \) 
the monomials \eqref{eqn: elementary polynomial} form a basis for the linear space 
	\( \Pcal{\tiln} .\)
The factorization space $\Scal_{\tilp}$ for $\tilp$ is given by
	\begin{align*}
		\Scal_{\tilp }& :=
		\C \times \Pcal{n_1-1} \times \Pcal{n_2-1} \times \cdots \times \Pcal{n_m-1},
	\end{align*}
where the component indexing for elements of $\Scal_{\tilp}$ starts at zero so that the
$j$th component is an element of $\Pcal{n_j-1}$. 
The spaces 
	$\Pcal{\tiln}$ 
and 
	$\Scal_{\tilp}$ 
are related through the mapping
	\( F_{\tilp}: \Scal_{\tilp} \to \Pcal{\tiln} \)  
given by
	\begin{equation}
	\label{eqn: F sub p defn}
	\begin{aligned}
		F_{\tilp}(q_0, q_1, q_2, \dots, q_m) 
			&:= (1 + q_0) 
			\mbox{$\prod_{j=1}^m$} ( \elem{n_j}{{\tlam}_j} + q_j).
	\end{aligned}
	\end{equation}
We have
	\( F_{\tilp}(0)  =  \prod_{j=1}^m \elem{n_j}{{\tlam}_j}=\tilp, \)
and, since the factors in \eqref{p-factor}
are relatively prime, there exist neighborhoods
$U$ of 0 in $\Scal_{\tilp}$ and $V$ of 
	\( {\tilp} \) 
in 
	$\Pcal{\tiln}$ 
such that 
	\( F_{\tilp} \vert_U : U \to V \) 
is a diffeomorphism (over $\C$)  \cite[Lemma 1.4]{burke-overton:01a}.  
The $\C$-derivative
	\( F_{\tilp}'(0): \Scal_{\tilp} \to \Pcal{\tiln} \)
is invertible and given by
	\begin{equation}
	\label{eqn: F nabla}
		 F'_{\tilp}(0)(\om_0, w_1, w_2, \dots, w_m) 
			= \om_0 {\tilp}
			+ \mbox{$\sum_{j=1}^m$ } r_j w_j, 
			\quad
			\mbox{where \(  r_j := {\tilp}/\elem{n_j}{{\tlam}_j} \).}
	\end{equation}

For each  
	\( \lamo\in\C \),
define the scalar \textit{Taylor maps} 
	$\map{\tau_{(k,\lamo)}}{\Pcal{\tiln}}{\C}$ 
by 
	\begin{equation}
	\label{eq:comp Taylor}
	\tau_{(k,\lamo)}(q):=q^{(k)}(\lamo)/k!\ ,
	\mbox{ for  }
	k = 0,1,2, \dots, \tiln,
	\end{equation}
where 
	\( q^{(\ell)} \)
is the $\ell$th derivative of $q$.  
Each mapping
	\( \tau_{(k, \lamo)} \)
takes a polynomial to its $k$th Taylor coefficient at 
	\( \lamo . \)
Note that the mapping 
	\( \tilde\tau_k(q,\lam) : =  \tau_{(k,\lam)}(q) \)
is continuous in $q$ and $\lam$ \cite{burke-overton:01a}.
Define the $\C$-linear isomorphism  
	\( \mathcal{T}_{\tilp}: \Scal_{\tilp} \to \C^{\tiln+1} \)
by
	\begin{align}
	\label{eqn: Tau defn}
		\Tau_{\tilp} (u )  
		& := [\mu_0, 
			(\tau_{(n_1-1, \tlam_1)}(u_1),
			\dots,
			\tau_{(0, \tlam_1)}(u_1)),
			\dots, \\ \nonumber& \hspace{3.0cm}
			(\tau_{(n_m-1, \tlam_m)}(u_m),
			\dots,
			\tau_{(0, \tlam_m)}(u_m))]^T
	\\ \nonumber
		& = [\mu_0,(\mu_{11},\dots,\mu_{1n_j}),\dots,(\mu_{m1},\dots,\mu_{mn_m})]^T, 
	\end{align}
where
	\begin{align}
	\label{u-defn}
		u := ( \mu_0, u_1, u_2, \dots, u_m)\in \Scal_{\tilp}, \ 
		& u_j := 
		\mbox{$\sum_{s=1}^{n_j}$} \mu_{js} \elem{n_j-s}{\tlam_j} ,  	 
		\mbox{ and }  	 \mu_0,   \mu_{js} \in \C, 
	\end{align}
for all 
	\( s \in\{ 1, 2, \dots n_j \}\)
and
	\( j \in\{ 1, 2, \dots m \}. \)

By Lemma~\ref{ips and isomorphisms},
	\( \Tau_{\tilp} \)
induces an inner product on 
	\( \Scal_{\tilp} \) 
by setting 
	\begin{equation}
	\label{eq:ip on Sp}
		\cip{u}{w}_{\Scal_{\tilp}} := \cip{\Tau_{\tilp}(u)}{\Tau_{\tilp}(w)}_{\C^{\tiln+1}}, 
	\mbox{  for all  }
	 u,w  \in  \Scal_{\tilp} , 
\end{equation}
where $u$ is given in \eqref{u-defn},  
	\begin{equation}
	\label{w-defn}
	\begin{aligned}
		w  & =  (\omega_0, w_1, w_2, \dots, w_m), \quad\mbox{with} \\
		w_j & = \mbox{$\sum_{s=1}^{n_j}$} \omega_{js} \elem{n_j-s}{\tlam_j} \quad\mbox{for  $j\in\{1, \dots, m\}$ \ \ and } \\
		\omega_0,\ &  \omega_{js} \in \C \quad\mbox{for $s\in\{1,\dots, n_j\}$, $j\in\{1, \dots, m\}$.}  
	\end{aligned}
	\end{equation}
Moreover, again by Lemma~\ref{ips and isomorphisms}, 
	\[ \Tau_{\tilp}^{\adj}=\Tau_{\tilp}^{-1}\]
with respect to the  inner products 
	$\cip{\cdot}{\cdot}_{\Scal_{\tilp}}$ 
and  
	$\cip{\cdot}{\cdot}_{\C^{\tiln+1}}$.

We are now ready to construct an inner product on 
	$\Pcal{\tiln}$ 
relative to $\tilp$.
Recall that
	\( F'_{\tilp}(0): \Scal_{\tilp} \to \Pcal{\tiln} \)
given in \eqref{eqn: F nabla} is a $\C$-linear isomorphism.
So for every
	\(  z, v  \in  \Pcal{\tiln}, \)
there exists 
	\( u, w \in  \Scal_{\tilp} , \) 
having representations \eqref{u-defn}  and  \eqref{w-defn}, such that
	\begin{eqnarray}
	\label{z-defn}
		z = F'_{\tilp}(0)u  
		\quad \mbox{and}\quad 
		v = F'_{\tilp}(0)w. 
	 \end{eqnarray}
By Lemma~\ref{ips and isomorphisms},
	\( F'_{\tilp}(0)^{-1} \)
induces an inner product $\cip{\cdot}{\cdot}_{(\Pcal{\tiln},{\tilp})}$ on 
	\( \Pcal{\tiln} \) 
based on the inner product 
	\( \cip{\cdot}{\cdot}_{\Scal_{\tilp}} \)
by setting
	\begin{align}
	\label{F ip}
		\cip{z}{v}_{(\Pcal{\tiln},{\tilp})}	 
		& := \cip{F'_{\tilp}(0)^{-1} z}{F'_{\tilp}(0)^{-1} v }_{\Scal_{\tilp}}   
		  = \cip{u}{w}_{\Scal_{\tilp}}  \\ 	\notag
		& = \cip{ (\mu_0, u_1, u_2, \dots, u_m)}{ (\omega_0, w_1, w_2, \dots, w_m)}_{\Scal_{\tilp}}  \\ 	 	
	\label{inner product}
		& =  \bar \mu_0 \omega_0  + 
		\mbox{$\sum_{j=1}^m \sum_{s=1}^{n_j} $}
		 \bar \mu_{js}\omega_{js} ,
	\end{align}
where $z$ and $v$ are given in \eqref{z-defn}, and 
$u$ and $w$ are given in \eqref{u-defn} and \eqref{w-defn}, respectively.
With respect to these inner products, \eqref{F ip} (or Lemma~\ref{ips and isomorphisms}) gives
	\begin{equation}
	\label{F' inverse is F' adjoint}
	(F'_{\tilp}(0))^\adj  =  F'_{\tilp}(0)^{-1}.  
	\end{equation}

\subsection{Active roots and active indices}
\label{sec: active roots}
Let
$p\in\Pcal{[\tiln]}$ and 
denote by $\Xi_p:=\{\lam_1, \dots, \lam_m\}$ the
distinct roots of $p$, ordered lexicographically.
The set of \emph{active roots} of $\sff$ at $p$ is given by
	\begin{equation}
	\label{active roots}
	 	\Acal_{\sff}(p) :=  \set{ \lam_j \in \Xi_p}{f(\lam_j) = \sff(p)}.
	\end{equation}
If $\lam_j \in \Xi_p \setminus\Acal_{\sff}(p)$, then
 $f(\lam_j)<\sff(p)$, and we say that $\lam_j$ is \emph{inactive.}
The set of \emph{active indices} of $\sff$ at $p$ is given by 
	\[  \Ical_{\sff}(p):= \set{j \in \{1, \dots, m\}}{\lam_j \in \Acal_{\sff}(p)}.  \]

\subsection{The subdifferential and subderivative for polynomial root max functions}
Again assume that 
	$f: \C \to \eR$ 
is proper, convex, and lsc. 
As discussed at the end of Section~\ref{sec: variational analysis background},
$f$ is subdifferentially regular and so \eqref{subderivative is directional} holds.

\begin{theorem}
\cite[Proposition 5.5, Theorem 5.3 and Theorem 6.2]{burke-eaton:12}
\label{Thm: Burke-Eaton 12}
Let
	$\tilp \in \Pcal{\tiln}\cap \dom(\sff)$ 
have degree $\tiln$ with decomposition \eqref{p-factor}. 
Assume that $f$ satisfies either \eqref{f-twice-diffable-and-psd} or \eqref{rspan is C}
at $\tlam_j$ for  $j \in \Ical_\sff(\tilp)$, and  that 
	\begin{equation}
	\label{eq:not zero}
	\sd f(\tlam_j) \neq \{0\}\quad\forall\ j \in \Ical_\sff(\tilp).
	\end{equation}
Given $j \in \Ical_\sff(\tilp)$, define
	\[
	\Qcal(\tlam_j)   :=  -\cone(\sd f(\tlam_j)^2) + \sfi(\rspan(\sd f(\tlam_j)^2)) \ \subset \C, 
	\]
where $\sd f(\tlam_j)^2 := \{g^2  | g \in\sd f(\tlam_j)\}$ and,
for $S \subset \C$, 
	\[ \cone(S) := \{\tau \zeta | \tau \in \R_+,\zeta \in S\}. \] 
For $\tlam_j \in \Acal_\sff(\tilp)$ at which
\eqref{f-twice-diffable-and-psd}  holds, define
	\[   \Dcal(n_j,\tlam_j) \!:= \!	
		 \{ \theta\,   |\,  \langle{\theta},{ (\nabla f(\tlam_j))^2}\rangle_{\C} \!\leq\!  \langle{ \sfi \nabla f (\tlam_j)}, {\nabla^2 f(\tlam_j)
	 	(\sfi \nabla f (\tlam_j))}\rangle_{\C}/n_j \}  \subset \C.   \]
Next, set	
	\[   D_{\tilp}  : =  \conv
		(\{0\}\times \bigtimes_{j=1}^m  \Gamma (n_j, {\tlam}_j) ) \ \subset \ \C^{\tiln+1},	 \] 
where, for 
	$j\notin \Ical_{\sff}(\tilp)$, 
	$\Gamma (n_j, {\tlam}_j):=\{0\} \subset \C^{n_j }$
and, for $j \in \Ical_{\sff}(\tilp)$,
	\begin{alignat*}{1}
		 \Gamma (n_j, {\tlam}_j)  := 
		\begin{cases}
			(- \sd f(\tlam_j)/n_j )		\times \Dcal( n_j,{\tlam}_j) \times \C^{n_j-2} 
	 	&\mbox{if \eqref{f-twice-diffable-and-psd}  holds at $\tlam_j$,} \\  
			(- \sd f(\tlam_j)/n_j )		\times \Qcal({\tlam}_j) \times \C^{n_j-2}  
		&\mbox{if \eqref{rspan is C} holds at $\tlam_j$ }  
		\end{cases}
		\ \subset \C^{n_j}.
	\end{alignat*}
Then
	\[ D_{\tilp}^\infty= 
	 \{0\}\times\bigtimes_{j=1}^m  \Gamma (n_j, {\tlam}_j)^\infty
	\quad\mbox{with}\quad
	 \Gamma (n_j, {\tlam}_j)^\infty\!=\! \{0\}\times \Qcal(\tlam_j)\times \C^{n_j-1},  \]
and, with respect to the inner product 
	$\ip{\cdot}{\cdot}_{(\Pcal{\tiln},{\tilp})}$ 
given in \eqref{inner product}, 
 	\begin{equation}
	\label{polynomial subdifferential}
	\rsd \sff({\tilp}) = F'_{\tilp}(0)\circ\Tau_{\tilp}^{-1} (D_{\tilp} )
	\quad\mbox{and}
	\quad 
	\rsd \sff({\tilp})^\infty = F'_{\tilp}(0)\circ\Tau_{\tilp}^{-1}   (D_{\tilp}^\infty) ,
	\end{equation}  
where $F'_{\tilp}(0)$ and $\Tau_{\tilp}$ are given in \eqref{eqn: F nabla} and \eqref{eqn: Tau defn}, respectively.

If $f$ satisfies  \eqref{f-twice-diffable-and-psd} for all $\tlam_j \in \Acal_\sff(\tilp)$, then
	$\sff$ 
is subdifferentially regular at ${\tilp}$ and \eqref{polynomial subdifferential}
gives the general and horizon subdifferentials, respectively, for $\sff$ at $\tilp$.
\end{theorem}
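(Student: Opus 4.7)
The plan is to reduce the subdifferential computation to a coordinate-wise problem on $\C^{\tiln+1}$ through the factorization-space chart. The local $\C$-diffeomorphism $F_\tilp|_U$ of \eqref{eqn: F sub p defn} sends $0\mapsto\tilp$, so by a smooth-reparametrization chain rule (in the spirit of \cite[Theorem 10.6]{rw:98a}),
\[
\rsd\sff(\tilp)=(F'_\tilp(0))^{-\adj}\rsd(\sff\circ F_\tilp)(0).
\]
The inner product $\cip{\cdot}{\cdot}_{(\Pcal{\tiln},\tilp)}$ was built via Lemma~\ref{ips and isomorphisms} precisely so that $(F'_\tilp(0))^\adj=F'_\tilp(0)^{-1}$, so $(F'_\tilp(0))^{-\adj}=F'_\tilp(0)$. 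Composing further with $\Tau_\tilp$ (which likewise satisfies $\Tau_\tilp^\adj=\Tau_\tilp^{-1}$) reduces the problem to identifying the regular subdifferential at $0\in\C^{\tiln+1}$ of $\sff\circ F_\tilp\circ\Tau_\tilp^{-1}$ in the Taylor-coefficient chart, and showing it equals $D_\tilp$. The horizon formula then follows from the product-set recession-cone description combined with \cite[Theorem 8.9]{rw:98a}.

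To identify the chart-space subdifferential coordinate-by-coordinate, I would perform a Newton-polygon (Puiseux) analysis on each active factor. The $\mu_0$ component only rescales the leading coefficient of $F_\tilp(u)$ without moving its roots, so $\sff\circ F_\tilp$ is locally independent of $\mu_0$, forcing the $\{0\}$ coordinate. Lower semicontinuity of $f$ together with the strict inequality $f(\tlam_j)<\sff(\tilp)$ at inactive roots $j\notin\Ical_\sff(\tilp)$ makes those factors inert near $0$, yielding $\Gamma(n_j,\tlam_j)=\{0\}$ there. For each active $j\in\Ical_\sff(\tilp)$, the factor $\elem{n_j}{\tlam_j}+u_j=\epsilon^{n_j}+\mu_{j1}\epsilon^{n_j-1}+\cdots+\mu_{jn_j}$ has $n_j$ roots $\epsilon_{js}(u)$ near $0$; under the weighted scaling $\mu_{js}=t^s\nu_{js}$ ($t\downarrow 0$) these roots are of order $t$ with mean $-\mu_{j1}/n_j$. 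A first-order expansion of $f$ at $\tlam_j$ combined with \eqref{subderivative is directional} and the convex subgradient inequality gives the constraint $\hat\mu_{j1}\in -\sd f(\tlam_j)/n_j$ on the first chart coordinate. Under \eqref{f-twice-diffable-and-psd}, the expansion $f(\tlam_j+\epsilon)=f(\tlam_j)+f'(\tlam_j;\epsilon)+\tfrac12 f''(\tlam_j;\epsilon,\epsilon)+o(|\epsilon|^2)$, summed over $s$ and reduced via the Newton--Girard identity $\sum_s\epsilon_{js}^2=\mu_{j1}^2-2\mu_{j2}$, produces the quadratic inequality defining $\Dcal(n_j,\tlam_j)$ on the second coordinate. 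Under \eqref{rspan is C}, the richness of $\sd f(\tlam_j)$ lets one select subgradients in every complex direction and the conical obstruction $\Qcal(\tlam_j)$ arises instead. Finally, for $s\ge 3$ the inner-product term $\Realpart(\bar{\hat\mu}_{js}\mu_{js})$ is of order $t^s$ while the variation of $\sff\circ F_\tilp$ is only of order $t$, so $\hat\mu_{js}$ is unconstrained in $\C$, producing the free $\C^{n_j-2}$ factors.

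For the subdifferential-regularity claim under \eqref{f-twice-diffable-and-psd}, the plan is to upgrade the pointwise computation to an outer-semicontinuity statement for $p\mapsto\rsd\sff(p)$ at $\tilp$. The $\mathcal{C}^2$ (or quadratic) hypothesis ensures continuity of $\nabla f$ and $\nabla^2 f$ at every active $\tlam_j$, and the Taylor coefficients of $F_\tilp$ at the active roots vary continuously in the polynomial (and remain controlled under small multiplicity-splittings, which the $\C^{n_j-2}$ factor in the formula is built to absorb). These stabilities imply that every limit of regular subgradients at nearby polynomials lies in $\rsd\sff(\tilp)$, so by the definition \eqref{general-sd-formula} one obtains $\sd\sff(\tilp)\subseteq\rsd\sff(\tilp)$; the reverse inclusion always holds, giving subdifferential regularity. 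The horizon analogue is handled by the same outer-semicontinuity argument applied at the scale in the definition of $\sd^\infty h$.

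The main obstacle is the quantitative second-order analysis that produces $\Dcal(n_j,\tlam_j)$: balancing the $t$-scale of $\mu_{j1}$ against the $t^2$-scale of $\mu_{j2}$, expanding $\sum_s f(\tlam_j+\epsilon_{js})$ to second order, and reducing the resulting quadratic forms in $\epsilon_{js}$ to functions of $(\mu_{j1},\mu_{j2})$ via Newton--Girard, all while correctly coupling the real and imaginary parts through $\nabla^2 f(\tlam_j)$. The $\Qcal(\tlam_j)$ case under \eqref{rspan is C} is an analogous but distinct difficulty, since $f$ need not be differentiable there; in both cases the output sets depend sensitively on the local complex structure in a way that must be tracked carefully through the inner-product identifications $(F'_\tilp(0))^\adj=F'_\tilp(0)^{-1}$ and $\Tau_\tilp^\adj=\Tau_\tilp^{-1}$.
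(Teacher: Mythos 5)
First, a structural point: the paper contains no proof of this statement at all---it is imported verbatim from \cite[Proposition 5.5, Theorem 5.3 and Theorem 6.2]{burke-eaton:12} and used as a black box, so there is no internal argument to compare yours against. Measured against the cited lineage (\cite{burke-eaton:12}, building on \cite{burke-lewis-overton:05a,burke-overton:01a}), your overall strategy is the right one: the reduction through $F_\tilp$ and $\Tau_\tilp$ using $(F'_\tilp(0))^\adj=F'_\tilp(0)^{-1}$ and $\Tau_\tilp^\adj=\Tau_\tilp^{-1}$ is exactly how formula \eqref{polynomial subdifferential} is structured, and the weighted scalings $\mu_{js}=t^s\nu_{js}$ together with the power-sum identity $\sum_s\epsilon_{js}^2=\mu_{j1}^2-2\mu_{j2}$ are the standard engine for producing the first- and second-order constraints in $\Gamma(n_j,\tlam_j)$.

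That said, two steps in your sketch are genuinely broken, not merely compressed. (i) The claim that lower semicontinuity plus $f(\tlam_j)<\sff(\tilp)$ makes inactive factors ``inert'' is false: lsc bounds $f$ from \emph{below} near $\tlam_j$, not from above, so $f$ may jump to $+\infty$ arbitrarily close to an inactive root when $\tlam_j$ sits on the boundary of $\dom f$. Upward jumps are harmless for the inclusion $\supseteq$ in \eqref{polynomial subdifferential}, but they destroy your necessity argument that inactive chart components of a regular subgradient vanish: along perturbations for which $\sff$ becomes $+\infty$, the regular-subgradient inequality is vacuous and yields no constraint. Concretely, take $f(\zeta):=\Realpart(\zeta)$ when $\Imaginarypart(\zeta)=0$ and $f(\zeta):=+\infty$ otherwise, with $\tilp(\lam)=\lam(\lam-1)$; then \eqref{rspan is C} and \eqref{eq:not zero} hold at the active root $1$, yet the chart vectors $(0,\sfi\gamma,-1)$, $\gamma\in\R$, satisfy the regular-subgradient inequality while having nonzero inactive component. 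So this step needs hypotheses or machinery beyond lsc (note that in the present paper's actual use of the theorem, $\tilp$ is the active factor, so every root is active and the issue never arises). (ii) The regularity argument is circular. ``Every limit of regular subgradients at nearby polynomials lies in $\rsd\sff(\tilp)$'' is, verbatim, the inclusion $\sd\sff(\tilp)\subseteq\rsd\sff(\tilp)$ that must be proved; continuity of Taylor coefficients is not a mechanism for proving it. The hard case is $p^\nu\to\tilp$ whose roots near $\tlam_j$ are split into clusters of smaller multiplicities: the regular subgradients at $p^\nu$ then satisfy constraint systems attached to a \emph{different} factorization space, and one must show that, as the clusters merge, those constraints pass in the limit to the $\Dcal(n_j,\tlam_j)$ inequality with its $1/n_j$ weight. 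This is precisely where convexity of $f$ and the continuity of $\nabla f$ and $\nabla^2 f$ from \eqref{f-twice-diffable-and-psd} are consumed (and why regularity is not asserted under \eqref{rspan is C} alone), and a separate limiting argument at the horizon scale is needed to get $\sd^\infty\sff(\tilp)=\rsd\sff(\tilp)^\infty$, which is part of the definition of subdifferential regularity used here. Your sketch acknowledges the splitting issue in passing but supplies no argument for it.
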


\begin{theorem}
\label{subderivative general}
\cite[Theorem 3.3]{burke-eaton:12}  
Let $f$ and $\tilp$ satisfy the hypotheses of
Theorem~\ref{Thm: Burke-Eaton 12}.
Let 
	\( v  =  F'_{\tilp}(0)(\om_0, w_1, \dots, w_m), \)
where
	\(
		w_j  \!=\! \sum_{s=1}^{n_j} \om_{js}
		\elem{n_j-s}{{\tlam}_j}
		\, \in \Pcal{n_j-1}
	\)
	for  all
	\( j \in \{1, \dots m\},\)
	with
	\( \om_{js} \in \C,\) for \( s\in\{1, \dots, n_j\}\).
If $v$ 
satisfies
	\begin{align} 
	\label{tc-second}
		0 & = \langle { g}, {  \sqrt{- \om_{j2} }} \rangle_{\C}  && \mbox{ for all }
		j  \in  \Ical_\sff({\tilp})\mbox{ and } g\in \sd f({\tlam}_j),  	\mbox{and} \\	
	\label{tc-third}			
		 0 & =  \om_{js} \quad && \text{ for all } s \in\{ 3, \dots, n_j\} \mbox{ and } j \in \Ical_\sff({\tilp}),\quad
	\end{align}
then
	\[ \dee \sff ({\tilp})(v) 
	= 
	\max_{j \in \Ical_\sff({\tilp})}
	   \{ (\, f ' ({\tlam}_j;- \om_{j1}) + \ka_j\}
	 \]
where, for \( j \in \Ical_\sff({\tilp}) , \)
	\begin{equation*}
	\ka_j :=
	\begin{cases}
	 f '' ({\tlam}_j; \sqrt{- \om_{j2}},\sqrt{- \om_{j2}})  \,)/n_j	 & \mbox{if \eqref{f-twice-diffable-and-psd}  holds at $\tlam_j$,} \\ 
	 0 & \mbox{if \eqref{rspan is C} holds at $\tlam_j$;}
	\end{cases}
	\end{equation*}
	otherwise, $\dee \sff ({\tilp})(v) =+\infty$.
In addition, 
	\( \dee \sff ({\tilp}) \) 
is proper, lsc, and sublinear.
\end{theorem}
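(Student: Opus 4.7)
The plan is to reduce the subderivative computation at $\tilp\in\Pcal{\tiln}$ to one at $0\in\Scal_{\tilp}$ via the local diffeomorphism $F_{\tilp}$, where the root factors decouple. Setting $\hat\sff:=\sff\circ F_{\tilp}$ on a neighborhood of $0$, a chain-rule argument through the $\C$-diffeomorphism $F_{\tilp}$ gives $\dee \sff(\tilp)(v)=\dee\hat\sff(0)(u)$ whenever $v=F'_{\tilp}(0)u$. On $\Scal_{\tilp}$ the polynomial $F_{\tilp}(tu)=(1+t\mu_0)\prod_{j=1}^m((\lam-\tlam_j)^{n_j}+tu_j)$ has its roots naturally grouped by factor; the roots contributed by the $j$th factor solve $\mu^{n_j}+t\omega_{j,1}\mu^{n_j-1}+\cdots+t\omega_{j,n_j}=0$ in $\mu=\lam-\tlam_j$, and I would study these roots by a Newton polygon / Puiseux expansion as $t\downarrow 0$.

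For inactive indices $j$, the roots of the $j$th factor tend to $\tlam_j$, and continuity of $f$ together with $f(\tlam_j)<\sff(\tilp)$ keeps them out of the maximum for small $t$. For an active index $j$, an unrestricted Newton polygon produces roots of order $t^{1/n_j}$, which yields $t^{1/n_j}$ fluctuations in $f$ that dominate any $O(t)$ behaviour; the role of condition \eqref{tc-third} is precisely to kill this by forcing $\omega_{j,s}=0$ for $s\geq 3$, collapsing the polynomial modulo $O(t^2)$ to $\mu^{n_j-2}(\mu^2+t\omega_{j,1}\mu+t\omega_{j,2})$. This produces two roots of the form $\mu_\pm=-t\omega_{j,1}/2\pm\sqrt{-t\omega_{j,2}}+O(t^{3/2})$ together with $n_j-2$ smaller roots of order $t$ coming from the repeated $\mu$ factor perturbed by the $O(t^2)$ remainder.

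Next I would expand $f$ around $\tlam_j$ at each of these roots. The $O(\sqrt{t})$ piece of $f(\tlam_j+\mu_\pm)-f(\tlam_j)$ is $\pm\sqrt{t}\,f'(\tlam_j;\sqrt{-\omega_{j,2}})$, which would diverge after division by $t$ unless it vanishes; condition \eqref{tc-second} is designed to do exactly that, since for convex $f$ one has $f'(\tlam_j;d)=\sup_{g\in\sd f(\tlam_j)}\Realpart\langle g,d\rangle_{\C}$, and \eqref{tc-second} is precisely the statement that $\sqrt{-\omega_{j,2}}$ annihilates every subgradient at $\tlam_j$. What survives at order $t$ is a first-order piece from the $O(t)$ drift of the roots (producing a term of $f'(\tlam_j;-\omega_{j,1})$ type, with the normalization dictated by Vieta's formula, according to which the $n_j$ roots share the shift $-t\omega_{j,1}$) and a second-order piece from the $\sqrt{-t\omega_{j,2}}$ dispersion (producing $\kappa_j$). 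Taking the maximum over the $n_j$ roots of each active factor and then over $j\in\Ical_\sff(\tilp)$ yields the claimed formula; if either \eqref{tc-second} or \eqref{tc-third} fails, the same expansion exhibits a root with an $f$-value of order $t^\alpha$ for some $\alpha<1$, so no sequence $v^\nu\to v$, $t_\nu\downarrow 0$ can give a finite liminf, forcing $\dee \sff(\tilp)(v)=+\infty$. Properness, lsc, and sublinearity of $\dee \sff(\tilp)$ then follow from the general identification $\epi{\dee\sff(\tilp)}=T_{\epi{\sff}}(\tilp,\sff(\tilp))$ together with the convex sublinear structure of the max formula in $u$.

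The main obstacle is the genuinely nonsmooth case \eqref{rspan is C}, in which $f$ is not $\mathcal{C}^2$ at $\tlam_j$, so there is no classical second-order Taylor expansion and the $\kappa_j=0$ prescription must be justified using only subdifferential data; one must verify that \eqref{tc-second} and the spanning hypothesis \eqref{rspan is C} together suffice to force a one-sided $o(t)$ remainder in the expansion of $f$ along $\sqrt{-t\omega_{j,2}}$. A further technical point is controlling the contributions of the $n_j-2$ small roots when $n_j\geq 3$: these roots are of order $t$ but their leading coefficient depends on the $O(t^2)$ corrections to the polynomial, and one must show that the resulting $f$-values contribute only $o(t)$, so that they never dominate the $O(t)$ maximum coming from the two $\sqrt{-t\omega_{j,2}}$ roots.
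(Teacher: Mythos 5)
This theorem is not proved in the paper at all; it is imported from \cite[Theorem 3.3]{burke-eaton:12}, whose proof runs through the tangent cone to $\epi{\sff}$ using convexity-based averaging and elementary-symmetric-function identities. Measured against that, your outline has a genuine gap: you compute the wrong object. The subderivative is a liminf over $t\downarrow 0$ \emph{and} $v'\to v$, while a Newton polygon/Puiseux expansion of $F_{\tilp}(tu)$ (equivalently of $\tilp+tv$) is an expansion along the \emph{fixed} direction, so it yields only $\lim_{t\downarrow 0}(\sff(\tilp+tv)-\sff(\tilp))/t$, which here is genuinely larger than $\dee\sff(\tilp)(v)$ once some $n_j\ge 3$. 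Concretely, let $f$ satisfy \eqref{f-twice-diffable-and-psd} with $\nabla f(\tlam)\ne 0$, take $\tilp=\elem{3}{\tlam}$ and $v=3\nabla f(\tlam)\,\elem{2}{\tlam}$, so $\om_{11}=3\nabla f(\tlam)$, $\om_{12}=\om_{13}=0$ and \eqref{tc-second}--\eqref{tc-third} hold. Then $\tilp+tv=\elem{2}{\tlam}\cdot\bigl(\elem{1}{\tlam}+3t\nabla f(\tlam)\bigr)$ keeps a double root at $\tlam$ for every $t$, so your fixed-direction quotient is $\ge 0$; yet $\dee\sff(\tilp)(v)=-\abs{\nabla f(\tlam)}^2<0$, attained only along perturbed directions $v^\nu\to v$ for which the active factor becomes the perfect power $\bigl(\elem{1}{\tlam}+t\nabla f(\tlam)\bigr)^3$, i.e.\ sequences that redistribute the drift $-t\om_{j1}$ over \emph{all} $n_j$ roots. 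This is exactly the inconsistency inside your own write-up: you invoke Vieta to say the $n_j$ roots ``share'' the shift $-t\om_{j1}$, but your expansion assigns drift $-t\om_{j1}/2$ to each of the two split roots and drift $0$ to the remaining $n_j-2$ roots. Those stationary roots are not an $o(t)$ technicality: whenever the formula's value is negative they pin the fixed-direction quotient at $0$, so no analysis along the ray through $v$ can prove the upper bound. (Relatedly, the display in the statement contains transcription typos---note the unbalanced parentheses---and, as its use in the proof of Theorem~\ref{thm: new 7.2 - derogatory} and the example above show, the correct value is $\max_j\bigl\{\bigl(f'(\tlam_j;-\om_{j1})+f''(\tlam_j;\sqrt{-\om_{j2}},\sqrt{-\om_{j2}})\bigr)/n_j\bigr\}$ with $n_j$ dividing \emph{both} terms; your proposal leaves precisely this normalization undetermined.)

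The second gap is the lower bound: equality requires bounding the quotient below along \emph{every} sequence $v'\to v$, and your outline has no mechanism for this. That is where the hypotheses actually act in \cite{burke-eaton:12}: convexity gives $\max_i f(\tlam_j+\nu_i)\ge f\bigl(\tlam_j+\tfrac{1}{n_j}\sum_i\nu_i\bigr)$ with the mean pinned by Vieta at $-t\om_{j1}/n_j+o(t)$, and positive semidefiniteness of $\nabla^2 f(\tlam_j)$ from \eqref{f-twice-diffable-and-psd} is what lets one bound $\sum_i\ip{\nu_i}{\nabla^2 f(\tlam_j)\nu_i}$ below knowing only the complex identity $\sum_i\nu_i^2=-2t\om_{j2}+O(t^2)$ (split the quadratic form into its trace and traceless parts and use $\sum_i\abs{\nu_i}^2\ge\abs{\sum_i\nu_i^2}$). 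In your proposal, convexity is used only to rewrite $f'$ as a support function and \eqref{f-twice-diffable-and-psd} only to Taylor-expand, so nothing excludes a sequence driving the quotient below the claimed value. The same defect undercuts your $+\infty$ claim: exhibiting a bad root along the fixed direction controls one sequence, whereas $\dee\sff(\tilp)(v)=+\infty$ requires divergence along all sequences, which again needs the uniform Vieta/convexity argument rather than a Puiseux expansion. The obstacle you flag (case \eqref{rspan is C}) is real but secondary; the gaps above already occur in the smooth case.
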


\section{The Derivative of the Nonderogatory Factor}
\label{sec: derivative of characteristic factor}
We now recall Arnold form \cite{Arnold:71} and its application
in \cite{burke-lewis-overton:01a}
for describing the local variational behavior of the Jordan form.

\subsection{Arnold Form} 
\label{sec: arnold}
Let  	
 	\begin{equation}
	\label{eqn: distinct evals matrix}
	\tXi :=\{\tlam_1,\dots,\tlam_m\} 
	\end{equation}
be a subset of the distinct eigenvalues of 
	\( \wt X\in\Cnn \).
The Jordan structure of $\wt X$ relative to these eigenvalues is given by 
	\begin{equation}
	\label{eqn: P and J, 1}
	J \!:=\! \wt{P} \wt X \wt{P}^{-1} \!=\!  \Diag (\wt{B}, J_{1},\dots,J_m ),
	 \mbox{ where }  
	J_j \!:=\!\Diag (J_j^{(1)},\dots,J_j^{(q_j)} )
	\end{equation}
and
	\( J_{j}^{(k)} \)
is an
	\( m_{jk}\times  m_{jk}\)
Jordan block 
	\begin{equation}
		\label{Nilpotent block jk}
		J^{(k)}_j := \tlam_j I_{m_{jk}} + N_{jk},
		\quad
		  k=1, \dots, q_j, \  
		  j=1, \dots, m,
	\end{equation}
where  
		\( N_{jk} \in \C^{ m_{jk} \times m_{jk} } \) 
is the nilpotent matrix given by ones on the superdiagonal and zeros elsewhere, and 
	$I_{m_{jk}} \in \C^{m_{jk} \times m_{jk}}$ 
is the identity matrix.
With this notation, 
	\( q_j \) 
is the geometric multiplicity of the eigenvalue 
	\( \tlam_j . \)
The algebraic multiplicity of 
	\( \tlam_j  \)
is 	
	\( n_j  := \sum_{k=1}^{q_j} m_{jk} . \)
 The size of the largest Jordan block for an eigenvalue is
	\begin{equation}
	\label{mj defn}
	m_j := \max_{k=1, \dots, q_j} m_{jk} . 
	\end{equation} 
Set $\tiln := \sum_{j=1}^m n_j$ and $\no := n-\tiln$.
The matrix $\wt{B} \in \C^{\no\times \no}$ (possibly of size $0$) corresponds to the eigenvalues not included in $\tXi$.
The eigenvalue 
	\( \tlam_j \)
is nonderogatory if 
	\( q_j = 1   \) 
(equivalently 
	\( m_j = n_j \)).
The matrix $\wt X$ is nonderogatory if all of its eigenvalues are nonderogatory.

\begin{theorem}
[Arnold Form]
\cite[Theorems 2.2, 2.7]{burke-lewis-overton:01a}
\label{thm: Arnold Form}
Suppose that all eigenvalues in $\tXi$ 
are nonderogatory.
We  suppress the index $k$ since $q_j=1$.
Then there exists a neighborhood 
	$\Omega$ 
of 
	$\wt{X} \in \Cnn$ 
and smooth maps 
	$P:\Omega \to \Cnn$, 
	$B: \Omega \to \C^{\no \times\no}$
and, for   $j \in \{1,\dots,m\}$ and $s\in\{0,1,\dots,n_j-1\}$,  
	$\lam_{js}: \Omega \to \C$ 
such that
		\[ 
		\begin{aligned}
		P(X) X P(X)^{-1} &= \! 
		\Diag\left(B(X),0,\dots,0\right)  +\mbox{$\sum_{j=1}^m \wc{J}_j(X)$} \in \Cnn,   \\
		\lam_{js}(\wt{X}) & =  0, \quad s = 0,1,\dots,n_j-1,\\
		P(\wt{X}) &= \wt{P},  \quad
		B(\wt{X})  =\wt{B}, \quad\mbox{and}\\ 
		P(\wt{X}) \wt{X} P(\wt{X})^{-1} &=  \Diag (B(\wt{X}), J_{1},\dots,J_m ), 
		\end{aligned}
		\]
where 
	\[ \begin{aligned}
	\wc{J}_j(X)
	& := \tlam_j J_{j0} + J_{j1}  
		+ \mbox{$\sum_{s=0}^{n_j-1}$} \lam_{js}(X) J_{js}^*,  
	\\ 
	J_{js}&:=\Diag(0,\dots,0,N_{j}^s,0,\dots,0),   \quad \mbox{and}
	\\
	J_{j0} &:= \Diag(0,\dots,0,I_{n_j},0,\dots,0), 
	\end{aligned} \]
with $N_{j}^s$ and $I_{n_j}$ in the $\tlam_j$ diagonal block (see \eqref{eqn: P and J, 1}).
Finally, the functions $\lam_{js}$ are  uniquely defined on $\Omega$, 
though the maps $P$ and $B$ are not unique.
\end{theorem}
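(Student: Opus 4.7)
The plan is to combine two classical constructions: a smooth spectral splitting that block-diagonalizes $X$ near $\wt X$, followed by a local implicit-function-theorem argument on each nonderogatory block that produces the claimed Arnold form. The final $P(X)$ will be the product of the two conjugations, and the scalars $\lam_{js}(X)$ will arise as implicit function theorem coordinates.

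For the spectral splitting step, I would invoke the holomorphic functional calculus. Since $\{\tlam_1\},\dots,\{\tlam_m\}$ and $\spec(\wt B)$ are pairwise disjoint, choose pairwise disjoint simple closed contours $\gamma_0,\gamma_1,\dots,\gamma_m$ in $\C$ enclosing $\spec(\wt B),\{\tlam_1\},\dots,\{\tlam_m\}$ respectively. For $X$ in a sufficiently small neighborhood $\Omega$ of $\wt X$, the Riesz projections
$$\Pi_j(X) := \frac{1}{2\pi\sfi}\oint_{\gamma_j}(zI-X)^{-1}\,dz$$
depend holomorphically on $X$ and retain constant ranks $\no, n_1, \dots, n_m$. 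Selecting a smoothly varying basis of each $\mathrm{range}(\Pi_j(X))$ (for instance by continuing the columns of $\Pi_j(X)\wt P^{-1}$) and assembling these bases produces a smooth $P_0\colon\Omega\to\Cnn$ with $P_0(\wt X)=\wt P$ and
$$P_0(X)\,X\,P_0(X)^{-1} = \Diag\bigl(B(X),M_1(X),\dots,M_m(X)\bigr),$$
where $B(\wt X)=\wt B$ and $M_j(\wt X)=J_j=\tlam_j I_{n_j}+N_j$.

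For each $j$ I would then normalize $M_j(X)$ via the implicit function theorem. Define
$$\Psi_j(X,H,\mu) := e^H M_j(X) e^{-H} - \tlam_j I_{n_j} - N_j - \mbox{$\sum_{s=0}^{n_j-1}$}\mu_s(N_j^s)^*,$$
which vanishes at $(\wt X,0,0)$. Its partial derivative at $(0,0)$ in $(H,\mu)$ is the linear map $(H,\mu)\mapsto [H,N_j]-\sum_{s=0}^{n_j-1}\mu_s(N_j^s)^*$ into $\C^{n_j\times n_j}$. The crucial observation is that under the trace inner product $\langle A,B\rangle=\tr(A^*B)$, the adjoint of $[\,\cdot\,,N_j]$ is $[\,\cdot\,,N_j^*]$, so the orthogonal complement of its range is the centralizer of $N_j^*$. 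Since $N_j$ is a single nilpotent Jordan block (this is where nonderogaticity enters), this centralizer equals $\Span\{(N_j^s)^*\,|\,s=0,\dots,n_j-1\}$, yielding the direct sum decomposition
$$\C^{n_j\times n_j} = \mathrm{range}\bigl([\,\cdot\,,N_j]\bigr) \oplus \Span\{(N_j^s)^*\,|\,s=0,\dots,n_j-1\}.$$
Thus the linearization is surjective and in fact bijective after restricting $H$ to a complement of the centralizer of $N_j$, and the implicit function theorem produces smooth $H_j(X)$ and $\lam_{js}(X)$ on $\Omega$ with $H_j(\wt X)=0$ and $\lam_{js}(\wt X)=0$. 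Set $Q_j(X):=e^{H_j(X)}$.

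To finish, take $P(X):=\Diag(I_{\no},Q_1(X),\dots,Q_m(X))\,P_0(X)$, which is smooth with $P(\wt X)=\wt P$ and puts $X$ into exactly the Arnold form of the statement. Uniqueness of the $\lam_{js}$ follows from the uniqueness clause of the implicit function theorem: any other smooth choice would differ by a similarity whose $j$-th block lies in the centralizer of $\wc J_j(X)$, and such centralizer perturbations do not affect the coefficients on the complement $\Span\{(N_j^s)^*\}$. The nonuniqueness of $P$ and $B$ reflects precisely this residual centralizer action. The main obstacle in this plan is the transversality check in the preceding paragraph -- that $\Span\{(N_j^s)^*\}$ is a genuine complement to $\mathrm{range}([\,\cdot\,,N_j])$ -- and it is exactly the nonderogatory hypothesis that makes this work; in the derogatory case the centralizer of $N_j^*$ is strictly larger than $\Span\{(N_j^s)^*\}$ and a different transversal subspace must be chosen, which is why the proof does not extend verbatim to that setting.
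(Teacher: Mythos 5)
This theorem is not proved in the paper at all: it is imported from \cite[Theorems 2.2, 2.7]{burke-lewis-overton:01a}, whose proof is Arnold's versal-deformation argument. Your construction is, in essence, exactly that argument: a spectral splitting by Riesz projections, followed by an implicit-function-theorem normalization of each nonderogatory block along the slice $\Span\{(N_j^s)^*\,|\,s=0,\dots,n_j-1\}$, justified by the orthogonal decomposition $\C^{n_j\times n_j}=\mathrm{range}([\,\cdot\,,N_j])\oplus\ker([\,\cdot\,,N_j^*])$ under the trace inner product and the fact that the centralizer of a single nilpotent Jordan block consists of the polynomials in it. The existence half of your proof (the displayed identities, smoothness of $P$, $B$, $\lam_{js}$, the normalizations $P(\wt X)=\wt P$, $B(\wt X)=\wt B$, $\lam_{js}(\wt X)=0$) is correct, and your closing remark about why the derogatory case fails is also accurate.

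The gap is in the uniqueness claim. The uniqueness clause of the implicit function theorem only covers solutions of \emph{your} equation $\Psi_j=0$ with $H$ confined to your chosen complement of the centralizer of $N_j$; the theorem asserts that \emph{every} smooth triple $(P,B,\lam)$ satisfying the displayed conclusions produces the same functions $\lam_{js}$, and such a $P$ need not have your factored form $\Diag(I_{\no},e^{H_1},\dots,e^{H_m})P_0$. Your sentence that any other choice ``would differ by a similarity whose $j$-th block lies in the centralizer of $\wc{J}_j(X)$'' presupposes that the two normal forms coincide, which is precisely what must be shown. The repair is as follows: if $(P',B',\lam')$ is another admissible triple, then $S(X):=P'(X)P(X)^{-1}$ intertwines the two block-diagonal normal forms; after shrinking $\Omega$ the spectra of distinct diagonal blocks are pairwise disjoint, so each off-diagonal block of $S(X)$ solves a Sylvester equation with spectrally disjoint coefficients and hence vanishes, i.e.\ $S(X)$ is block diagonal. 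Consequently, for each $j$ the matrices $\tlam_j I_{n_j}+N_j+\sum_{s}\lam_{js}(X)(N_j^s)^*$ and $\tlam_j I_{n_j}+N_j+\sum_{s}\lam'_{js}(X)(N_j^s)^*$ are similar and therefore have equal characteristic polynomials. Finally, the map $\mu\mapsto\det\bigl(\xi I_{n_j}-N_j-\sum_{s}\mu_s(N_j^s)^*\bigr)$ is locally injective at $\mu=0$, since its linearization sends $\mu_s$ to the coefficient $-(n_j-s)\mu_s$ of $\xi^{n_j-s-1}$ (this is exactly the content of Lemma~\ref{lem:det lem} in the paper), so the two parameter vectors agree for $X$ near $\wt X$. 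With this step inserted, your proof is a complete and faithful reconstruction of the cited one.
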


\begin{remark}
Theorem~\ref{thm: Arnold Form} illustrates a fundamental difference between the symmetric and nonsymmetric
cases. In the symmetric case, the matrices are unitarily diagonalizable so there
are no nilpotent matrices $N_j$ and the mappings $\lam_{js}$ reduce to the eigenvalue mapping $\lam_j$. 
In this case, a seminal result due to Lewis \cite[Theorem 6]{lewis:99a} 
shows that the variational properties depend on 
the eigenvalues. 
On the other hand, in the nonsymmetric case they depend on the entire family of functions $\lam_{js}$.
\end{remark}

\begin{lemma}
\label{lem: lemma 2.12 in OSEM}
\cite[Lemma 2.12]{burke-lewis-overton:01a}.
Assume the hypotheses of Theorem \ref{thm: Arnold Form}.
Then the gradients (see \eqref{nabla notation}) of the functions $\lam_{js}: \Cnn \to \C$
are given by
	\( \nabla \lam_{js} (\wt X) = (n_j - s)^{-1}\wt{P}^{*} J_{js}^*\wt{P}^{-*},\)
	and 
	\[	( \lam_{js}' (\wt X))^\adj \zeta   =  \zeta(n_j - s)^{-1}\wt{P}^{*} J_{js}^*\wt{P}^{-*}  
\quad\forall\ \zeta \in \C\]
with respect to the inner product $\cip{\cdot}{\cdot}_{\Cnn}$
	 (see \eqref{adjoint of derivative}).
\end{lemma}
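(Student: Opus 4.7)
The plan is to differentiate the Arnold conjugation identity and extract the scalar $\lam'_{js}(\wt X)(V)$ by pairing against a carefully chosen matrix under the trace inner product on $\Cnn$. Writing Theorem~\ref{thm: Arnold Form} as $P(X)X = \wc J(X) P(X)$, where
\[
\wc J(X) := \Diag(B(X),0,\dots,0) + \sum_{j=1}^m \wc J_j(X),
\]
I would differentiate at $\wt X$ in direction $V\in\Cnn$ and use $P(\wt X)=\wt P$, $\wc J(\wt X)=J$ to rearrange the resulting identity into
\[
\wc J'(\wt X)(V) \;=\; \wt P V\wt P^{-1} - [J,M],\qquad M := P'(\wt X)(V)\,\wt P^{-1},
\]
where $[\cdot,\cdot]$ denotes the matrix commutator. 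Since $\lam_{js}(\wt X)=0$ and $\tlam_j J_{j0}+J_{j1}$ is constant in $X$, direct differentiation of the explicit expression for $\wc J_j(X)$ in Theorem~\ref{thm: Arnold Form} gives
\[
\wc J'(\wt X)(V) = \Diag(B'(\wt X)(V),0,\dots,0) + \sum_{j=1}^m\sum_{s=0}^{n_j-1} \lam'_{js}(\wt X)(V)\, J_{js}^*.
\]

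To isolate $\lam'_{j'u}(\wt X)(V)$, I would apply $\tr(J_{j'u}\,\cdot\,)$ to both sides of the rearranged identity. Three facts drive the argument. First, a direct shift-matrix calculation yields
\[
\tr(J_{j'u}\, J_{js}^*) \;=\; \delta_{jj'}\,\delta_{us}\,(n_j-s),
\]
since $N_j^u (N_j^s)^*$ has $1$'s on its diagonal at positions $1,\dots,n_j-s$ exactly when $u=s$, and no diagonal support otherwise. Second, the $\Diag(B'(\wt X)(V),0,\dots,0)$ term contributes $0$ because $J_{j'u}$ is supported off the $\wt B$-block. Third, the commutator term drops out by cyclicity of the trace: $\tr(J_{j'u}[J,M]) = \tr([J_{j'u},J]M)$ and $[J_{j'u},J]=0$, because on the $j'$-block $N_{j'}^u$ commutes with $J_{j'} = \tlam_{j'} I_{n_{j'}} + N_{j'}$ while $J_{j'u}$ is zero on every other block.

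Combining these three observations and applying cyclicity to the remaining term produces
\[
\lam'_{j'u}(\wt X)(V) \;=\; (n_{j'}-u)^{-1}\tr(\wt P^{-1} J_{j'u}\wt P\, V).
\]
Reading off the gradient from $\lam'_{j'u}(\wt X)(V) = \cip{\nabla\lam_{j'u}(\wt X)}{V}_{\Cnn} = \tr((\nabla\lam_{j'u}(\wt X))^* V)$ yields $\nabla\lam_{j'u}(\wt X) = (n_{j'}-u)^{-1}\wt P^{*} J_{j'u}^*\wt P^{-*}$, and the adjoint statement $(\lam'_{j'u}(\wt X))^{\adj}\zeta = \zeta\,\nabla\lam_{j'u}(\wt X)$ follows immediately from \eqref{adjoint of derivative} (equivalently, Lemma~\ref{lem: adjoint operator versus matrix}). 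The main obstacle is the simultaneous elimination of the unknown derivatives $B'(\wt X)(V)$ and $P'(\wt X)(V)$ from the trace pairing; these disappear precisely because the picker $J_{j'u}$ is supported on the correct block \emph{and} commutes with $J$, which is the structural feature that makes the gradient formula canonical despite the non-uniqueness of $P$ and $B$.
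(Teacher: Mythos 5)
Your proof is correct. One point of comparison is moot, however: the paper itself does not prove this lemma at all---it is quoted verbatim from \cite[Lemma 2.12]{burke-lewis-overton:01a} and used as a black box in Lemma~\ref{lem: little g defn and der adjoint} and Theorem~\ref{thm: G is injective}. What you have supplied is a self-contained derivation, and it is sound: differentiating $P(X)X=\wc{J}(X)P(X)$ at $\wt X$ and right-multiplying by $\wt{P}^{-1}$ does give $\wc{J}'(\wt X)(V)=\wt{P}V\wt{P}^{-1}-[J,M]$; the three elimination facts all check out (the orthogonality $\tr(J_{j'u}J_{js}^*)=\delta_{jj'}\delta_{us}(n_j-s)$ follows from the diagonal of $N_j^u(N_j^s)^*$ being the first $n_j-s$ ones exactly when $u=s$; the $B$-block is disjoint from the support of $J_{j'u}$; and $[J_{j'u},J]=0$ since $N_{j'}^u$ commutes with $\tlam_{j'}I_{n_{j'}}+N_{j'}$, so cyclicity kills the commutator term). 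This simultaneous elimination of the unknown, non-unique derivatives $P'(\wt X)$ and $B'(\wt X)$ is precisely the structural reason the gradient formula is well defined, and your trace identity $\lam'_{j'u}(\wt X)(V)=(n_{j'}-u)^{-1}\tr(\wt{P}^{-1}J_{j'u}\wt{P}V)$ yields $\nabla\lam_{j'u}(\wt X)=(n_{j'}-u)^{-1}\wt{P}^*J_{j'u}^*\wt{P}^{-*}$ upon conjugate-transposing inside $\cip{\cdot}{\cdot}_{\Cnn}$, with the adjoint statement then immediate from \eqref{adjoint of derivative}. This is in the same spirit as the derivation in the cited reference (differentiate the Arnold normal form and pair against the shift matrices), so you have not taken a fundamentally different route---you have filled in the proof the present paper delegates to \cite{burke-lewis-overton:01a}.
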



\subsection{The derivative of the nonderogatory characteristic factors}
\label{subsec:char poly}
The local factorization of $X$ on $\Omega$ described in Theorem \ref{thm: Arnold Form} can be used to describe
a local factorization of the characteristic polynomial 
	$\det{(\lam I-X)}$ near $\wt{X}$. 
The following technical lemma allows us to represent the coefficients of the factors of the characteristic polynomial
in terms of the functions $\lam_{js}$ in Theorem \ref{thm: Arnold Form}.

\begin{lemma}
\label{lem:det lem}
Let 
	$J\in\Cnn$ 
be a Jordan matrix having ones on the superdiagonal and zeros elsewhere, 
	$\xi\in\C$, 
and
	$\lam=(\lam_0, \dots, \lam_{n-1})\in\C^n$.
Consider the matrix
	\begin{equation*}
	\xi I_n-J-
	\mbox{$\sum_{s=0}^{n-1}$} \lam_s (J^*)^s \  = \ 
	\xi I_n - 
	{\mbox{\scriptsize\(
	\begin{bmatrix}
	\lamo		&	1			&	0			&	\cdots	&	0\\
	\lam_1		&	\lam_0		&	1			&	\cdots	&	0\\
	\vdots		&	\vdots		&	\vdots		&	\ddots	&	1\\
	\lam_{n-1}	&	\lam_{n-2}	&	\lam_{n-3}	&	\cdots	&	\lamo
	\end{bmatrix}  \)}}.
	\end{equation*}
Then 
	\begin{equation}
	\label{eqn: det formula}
	\det (\xi I_n \!-\! J \!-\! \mbox{$\sum_{s=0}^{n-1}$} \lam_s(J^*)^s )
	 = 
		\xi^n \!-\! \mbox{$\sum_{s=0}^{n-1}$} (n-s)\lam_{s} \xi^{n-s-1} 
		\!+\!  
		o(\lam) 
	\end{equation}
where $J^0:=I_n$. 
\end{lemma}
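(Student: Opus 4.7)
The plan is to Taylor-expand the determinant as a function of $\lam = (\lam_0, \dots, \lam_{n-1})$ about $\lam = 0$ and identify the constant and linear parts. Write $A := \xi I_n - J$ and $B(\lam) := \sum_{s=0}^{n-1}\lam_s (J^*)^s$, so that the matrix of interest is $A - B(\lam)$. By Jacobi's formula (equivalently, the first-order Taylor expansion of $\det$),
\[
 \det(A - B(\lam)) \;=\; \det(A) \;-\; \tr\bigl(\mathrm{adj}(A)\,B(\lam)\bigr) \;+\; o(\lam),
\]
where $\mathrm{adj}(A)$ denotes the classical adjugate. So the lemma reduces to evaluating $\det(A)$ and the trace.

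Since $J$ is nilpotent with $J^n = 0$, we immediately get $\det(A) = \xi^n$, matching the leading term. For the adjugate, the Neumann series
\[
 (\xi I_n - J)^{-1} \;=\; \mbox{$\sum_{k=0}^{n-1}$}\, \xi^{-(k+1)} J^k
\]
(valid for $\xi \ne 0$, and in fact an identity of rational functions of $\xi$) gives
\[
 \mathrm{adj}(\xi I_n - J) \;=\; \xi^n (\xi I_n - J)^{-1} \;=\; \mbox{$\sum_{k=0}^{n-1}$}\, \xi^{n-1-k} J^k,
\]
which is polynomial in $\xi$ as it must be. Hence
\[
 \tr\bigl(\mathrm{adj}(A)\,B(\lam)\bigr) \;=\; \mbox{$\sum_{s=0}^{n-1} \sum_{k=0}^{n-1}$}\, \lam_s\,\xi^{n-1-k}\,\tr\bigl(J^k (J^*)^s\bigr).
\]

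Now I carry out the key combinatorial step, the trace computation. Indexing entries by $1,\dots,n$, one has $(J^k)_{i,j} = \delta_{j,\,i+k}$ and $((J^*)^s)_{i,j} = \delta_{i,\,j+s}$ (with the usual convention that these are zero when indices fall outside $\{1,\dots,n\}$). Multiplying, $\bigl(J^k (J^*)^s\bigr)_{i,j}$ is nonzero only when $i+k = j+s$, so taking $j=i$ forces $k = s$; when $k = s$, the diagonal entry equals $1$ for $i = 1, \dots, n-s$ and $0$ otherwise. Therefore
\[
 \tr\bigl(J^k (J^*)^s\bigr) \;=\; (n-s)\,\delta_{k,s}.
\]
Substituting back collapses the double sum to
\[
 \tr\bigl(\mathrm{adj}(A)\,B(\lam)\bigr) \;=\; \mbox{$\sum_{s=0}^{n-1}$}\, (n-s)\,\lam_s\, \xi^{n-s-1},
\]
which, combined with $\det(A) = \xi^n$ and the Taylor expansion above, yields \eqref{eqn: det formula}.

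The only non-routine step is the trace identity $\tr(J^k(J^*)^s) = (n-s)\delta_{k,s}$, and even that is a direct index computation; everything else is standard calculus of determinants. If one preferred an even more elementary route, one could instead observe that $\det(A - B(\lam))$ is a polynomial in the scalar variables $\lam_0, \dots, \lam_{n-1}$ and extract the constant and linear coefficients by setting all but one $\lam_s$ to zero and expanding the resulting determinant along the last row (or using the companion-matrix structure that emerges when only $\lam_0, \dots, \lam_{n-1}$ are nonzero on the appropriate subdiagonals). Either route produces the same linear part, confirming \eqref{eqn: det formula}.
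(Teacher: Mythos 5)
Your proof is correct, but it takes a genuinely different route from the paper. The paper's argument is purely combinatorial and inductive: it expands the determinant along the first column to obtain the recursion $\det(A_n) = a_{n-1} + a_{n-2}\det(A_1) + \cdots + a_0\det(A_{n-1})$ for the bordered Toeplitz blocks $A_s$, and then proves \eqref{eqn: det formula} by induction on $n$, collecting like powers of $\xi$. You instead isolate the linear part in $\lam$ directly via Jacobi's formula $\det(A-B(\lam)) = \det(A) - \tr(\mathrm{adj}(A)B(\lam)) + o(\lam)$, compute $\mathrm{adj}(\xi I_n - J) = \sum_{k=0}^{n-1}\xi^{n-1-k}J^k$ from the Neumann series (a polynomial identity, so valid for all $\xi$), and reduce everything to the trace identity $\tr\bigl(J^k(J^*)^s\bigr) = (n-s)\,\delta_{k,s}$, which you verify by an index computation. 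All three steps check out, and since $\det(A - B(\lam))$ is jointly polynomial in $(\xi,\lam)$, the remainder consists of monomials of degree at least two in $\lam$, so the $o(\lam)$ bookkeeping is sound. What your approach buys is a structural explanation of the coefficient $(n-s)$ --- it is literally the number of nonzero diagonal entries of $J^s(J^*)^s$ --- and it avoids induction altogether; what the paper's approach buys is self-containedness, using nothing beyond cofactor expansion, which fits its elementary, hands-on style. Either proof suffices for the application in Lemma~\ref{lem: little g defn and der adjoint}.
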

\begin{proof}
First consider matrices of the form
	\[
	A_s := 
	{\mbox{\scriptsize\(
	\begin{bmatrix}
	a_0		&	-1		&	0		 		&	\cdots	&	0\\
	a_1		&	a_0		&	-1		 		&	\cdots	&	0\\
	a_2		&	a_1		&	a_0		 		&	\cdots	&	0\\
	\vdots	&	\vdots	&	\vdots 			&	\ddots	&	-1\\
	a_{s-1}	&	a_{s-2}	&	a_{s-3} 			&	\cdots	&	a_0
		\end{bmatrix}\)}
	\in \C^{s\times s}}, 
		\]
for	$s=1, \dots, n$, 
where 
	$a_0, a_1, \dots, a_{s-1} \in \C$   
so that for 
	$s=1, \dots, n-1$, $A_s$ 
is the lower right (or upper left)
	$s\times s$ 
block of 
	$A_{s+1}$.
Note that $\det(A_n)$ equals
	\begin{equation}
	\label{eqn: nice det}
	a_{n-1}\! +\! a_{n-2} \det(A_1) \!+\! a_{n-3}\det(A_2) \!+\! \cdots \!+\!  a_1 \det(A_{n-2}) \!+\! a_0\det(A_{n-1}).
	\end{equation}
To see this, expand 
the determinant on the first column of $A_n$ and observe that when the 
$s$th row and first column is deleted from $A_n$, 
the resulting 
	$(n-1)\times(n-1)$ 
matrix is lower block triangular with only two diagonal blocks, 
where the upper diagonal block is an 
	$(s-1)\times (s-1)$ 
lower triangular matrix with $-1$ in every diagonal entry and 
the lower diagonal $(n-s)\times (n-s)$ block equals $A_{n-s}$.

To establish the lemma, we need to find $\det(A_n)$ with $a_0=\xi-\lamo$ and 
	$a_s=-\lam_s$ for $s=1,\dots,n-1$. 
For 
	$n=1$, $A_1=(\xi - \lamo)$.
For $n=2$,   
	\[ \det
	\left[
	\begin{array}{rr}
	(\xi-\lamo)	& -1		\\
	-\lam_1					& (\xi-\lamo)  
	\end{array}
	\right]
	= (\xi-\lamo)^2 - \lam_1
	= \xi^2 - 2\lamo\xi - \lam_1  + o(\lam).  
	\]
Suppose \eqref{eqn: det formula} holds for 
	$s=1, \dots, n-1$. 
This together with \eqref{eqn: nice det} implies 
%
	\begin{align*}
	\det(A_n)   
	= &  
	-\lam_{n-1}- \lam_{n-2}(\xi - \lamo) 
	- \lam_{n-3}( \xi^2 - 2\lamo\xi - \lam_1  ) \ - \  \cdots 	 \\
						& \ 	\mbox{	\( - \lam_{1} (  \xi^{n-2}  
												- \sum_{s=0}^{(n-2)-1}((n-2)-s)\lam_s\xi^{(n-2)-s-1}        ) \)} 		 \\
						& \  \mbox{\(		+(\xi- \lamo) ( \xi^{n-1}  
												- \sum_{s=0}^{(n-1)-1}((n-1)-s)\lam_s\xi^{(n-1)-s-1}        )	  + o(\lam) \)}.
	\end{align*} 
Collecting like powers of $\xi$ establishes the result: 
	\begin{align*}
	\det(A_n) 
	& = \ \mbox{\small \(	 \xi^n - \lamo \xi^{n-1}  - 
	\sum_{s=0}^{(n-1)-1}((n-1)-s)\lam_s\xi^{n-s-1} -  \sum_{s=1}^{n-1}\lam_{s}\xi^{n-s-1}   +o(\lam)    \)} \\
	& =  \ \mbox{\small \(	  \xi^n - \lamo \xi^{n-1}  - (n-1)\lam_0\xi^{n-1} - 
	\left(  \sum_{s=1}^{(n-1)-1}(n-s)\lam_s\xi^{n-s-1} \right) - \lam_{n-1}  + o(\lam),	 \)}\\
	& =  \ \mbox{\small \(	 \xi^n - n\lamo \xi^{n-1}  - 
	\left(  \sum_{s=1}^{(n-1)-1}(n-s)\lam_s\xi^{n-s-1} \right) - \lam_{n-1} 	+ o(\lam),	 \)}\\
	& =  \ \mbox{\small \(	  \xi^n  -   \sum_{s=0}^{n-1}(n-s)\lam_s\xi^{n-s-1} + o(\lam). \)} 
	\end{align*}
\end{proof}

\begin{lemma}
\label{lem: little g defn and der adjoint}
Assume the hypotheses of Theorem \ref{thm: Arnold Form}.
Let $j\in\{1, \dots, m\}$ and define $g_j: \Omega \to \Pcal{n_j-1}$ by  
	\begin{eqnarray}
	\label{eqn: g defn}
	g_j( X ) :=  \Phi_{n_j}( \wh{J}_j(X) ) \ - \ \elem{n_j}{\tlam_j}\, ,
	\end{eqnarray}
where 
	$\Phi_{n_j}$ and $\elem{n_j}{\tlam_j}$ are defined in \eqref{eqn: Phi defn} and \eqref{eqn: elementary polynomial}, respectively,
and
	\begin{equation}
	\label{hat Jj defn}
	 \wh{J}_j(X):=  \tlam_j I_{n_j} + J_{j}  + 
		\mbox{$\sum_{s=0}^{n_j-1}$} \lam_{js}(X) (J_{j}^s)^*. 
	\end{equation}
Then 
	\begin{equation}
	\label{eq:expand gj}
	g_j(X) \!= 
	  -  \mbox{\small$
	 \sum_{s=0}^{n_j-1} (n_j \!-\! s)\lam_{js}(X) 
	 \elem{n_j-s-1}{\tlam_j}
			+ o(\lam_{j0}(X), \dots, \lam_{j(n_j-1)}(X))$}.
	\end{equation}
Moreover, 
	$g_j(\wt{X})=0$, 
and, with respect to the inner products  
	\[
	\cip{\cdot}{\cdot}_{\scriptscriptstyle{[n_j-1,\tlam_j]}}:=
	\mbox{   ${\sum_{s=0}^{n_j-1}}$}  \cip{\tau_{(n_j-s-1,\tlam_j)}(\cdot)}{\tau_{(n_j-s-1,\tlam_j)}(\cdot)}_{\C} 
	\]
on 
	$\Pcal{n_j-1}$ 
and 
	$\cip{\cdot}{\cdot}_{\Cnn}$ on $\Cnn$, 
	\begin{eqnarray}
	\label{eqn: nabla g adjoint defn}
		(g_j'( \wt{X} ))^\adj  
		=-  \mbox{$\sum_{s=0}^{n_j-1}$} \wt{P}^{*} J_{js}^{*} \wt{P}^{-*} \, \tau_{(n_j-s-1,\tlam_j)}, 
	\end{eqnarray}
where the mapping 
	$\tau_{(k,\lamo)}$ 
is defined in \eqref{eq:comp Taylor}. 
In particular, 
	$(g_j'(\wt{X}))^\adj$ 
is injective.
\end{lemma}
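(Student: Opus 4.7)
The plan is to prove the four assertions in order: the expansion \eqref{eq:expand gj}, the vanishing $g_j(\wt X)=0$, the adjoint formula \eqref{eqn: nabla g adjoint defn}, and injectivity of $(g_j'(\wt X))^\adj$. The only nontrivial step is the adjoint computation; everything else is a direct consequence of Lemma~\ref{lem:det lem} combined with Lemma~\ref{lem: lemma 2.12 in OSEM}.

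\textbf{Step 1 (the expansion).} By the definitions \eqref{eqn: Phi defn} and \eqref{hat Jj defn},
\[
\Phi_{n_j}(\wh J_j(X))(\lam) = \det\bigl((\lam - \tlam_j)I_{n_j} - J_j - \mbox{$\sum_{s=0}^{n_j-1}$}\lam_{js}(X)(J_j^s)^*\bigr).
\]
Apply Lemma~\ref{lem:det lem} with $\xi := \lam-\tlam_j$ and $J:=J_j$. This yields
\[
\Phi_{n_j}(\wh J_j(X))(\lam) = (\lam-\tlam_j)^{n_j} - \mbox{$\sum_{s=0}^{n_j-1}$}(n_j-s)\lam_{js}(X)(\lam-\tlam_j)^{n_j-s-1} + o(\lam_j(X)),
\]
where $\lam_j(X):=(\lam_{j0}(X),\dots,\lam_{j(n_j-1)}(X))$. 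Subtracting $\elem{n_j}{\tlam_j} = (\lam-\tlam_j)^{n_j}$ gives \eqref{eq:expand gj}.

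\textbf{Step 2 (vanishing and derivative).} Since $\lam_{js}(\wt X)=0$ for all $s$ by Theorem~\ref{thm: Arnold Form}, \eqref{eq:expand gj} immediately yields $g_j(\wt X)=0$. Differentiating \eqref{eq:expand gj} at $\wt X$ in direction $H\in\Cnn$ and discarding the $o$-term gives the $\C$-linear map
\[
g_j'(\wt X)H \; =\; -\mbox{$\sum_{s=0}^{n_j-1}$}(n_j-s)\,\lam_{js}'(\wt X)(H)\,\elem{n_j-s-1}{\tlam_j}\ \in\ \Pcal{n_j-1}.
\]

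\textbf{Step 3 (adjoint).} Fix $q\in\Pcal{n_j-1}$ and set $\zeta_s:=\tau_{(n_j-s-1,\tlam_j)}(q)\in\C$. Since $\tau_{(k,\tlam_j)}(\elem{\ell}{\tlam_j}) = \delta_{k\ell}$, the inner product $\cip{\cdot}{\cdot}_{\ssub{[n_j-1,\tlam_j]}}$ reads off coordinates in the basis $\{\elem{n_j-s-1}{\tlam_j}\}_{s=0}^{n_j-1}$, so
\[
\cip{g_j'(\wt X)H}{q}_{\ssub{[n_j-1,\tlam_j]}} = -\mbox{$\sum_{s=0}^{n_j-1}$}(n_j-s)\,\overline{\lam_{js}'(\wt X)(H)}\,\zeta_s.
\]
By Lemma~\ref{lem: lemma 2.12 in OSEM} and the definition \eqref{adjoint defn} of adjoints, $\overline{\lam_{js}'(\wt X)(H)}\,\zeta_s = \cip{H}{\zeta_s(n_j-s)^{-1}\wt P^{*}J_{js}^{*}\wt P^{-*}}_{\Cnn}$. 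Substituting this into the display above, the factors $(n_j-s)$ cancel, yielding
\[
\cip{g_j'(\wt X)H}{q}_{\ssub{[n_j-1,\tlam_j]}} = \cip{H}{-\mbox{$\sum_{s=0}^{n_j-1}$}\wt P^{*}J_{js}^{*}\wt P^{-*}\,\tau_{(n_j-s-1,\tlam_j)}(q)}_{\Cnn}.
\]
Since $H$ was arbitrary, this establishes \eqref{eqn: nabla g adjoint defn}.

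\textbf{Step 4 (injectivity).} If $(g_j'(\wt X))^\adj q=0$, then by the block structure of the matrices $J_{js}^*$ and the conjugation by $\wt P$, it suffices to observe that $\{(N_j^s)^*\}_{s=0}^{n_j-1}$ is a linearly independent family in $\C^{n_j\times n_j}$. Hence $\tau_{(n_j-s-1,\tlam_j)}(q)=0$ for $s=0,\dots,n_j-1$, which accounts for all Taylor coefficients of $q$ at $\tlam_j$ of order $\le n_j-1$. Since $q\in\Pcal{n_j-1}$, this forces $q=0$.

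The main technical care needed is in Step~3: the three inner products ($\cip{\cdot}{\cdot}_{\ssub{[n_j-1,\tlam_j]}}$ on $\Pcal{n_j-1}$, $\cip{\cdot}{\cdot}_\C$, and $\cip{\cdot}{\cdot}_{\Cnn}$) must be tracked carefully, and the cancellation of the factors $(n_j-s)$ between the derivative formula and the adjoint formula of Lemma~\ref{lem: lemma 2.12 in OSEM} is the crucial arithmetic that makes the final expression clean.
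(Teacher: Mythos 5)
Your proposal is correct and follows essentially the same route as the paper's proof: Lemma~\ref{lem:det lem} (with $\xi=\lam-\tlam_j$, $J=J_j$) for the expansion, the gradient/adjoint formula of Lemma~\ref{lem: lemma 2.12 in OSEM} with the same cancellation of the factors $(n_j-s)$ for the adjoint identity, and linear independence of the matrices $\wt{P}^{*}J_{js}^{*}\wt{P}^{-*}$ together with the fact that vanishing Taylor coefficients of order $\le n_j-1$ force $q=0$ for injectivity. The only cosmetic difference is organizational: you compute $\cip{g_j'(\wt X)H}{q}_{\ssub{[n_j-1,\tlam_j]}}$ and pass the operator across by conjugate symmetry, whereas the paper expands $\cip{(g_j'(\wt X))^\adj h}{M}_{\Cnn}$ directly from the defining property of the adjoint.
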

\begin{proof}
Let 
	$j\in\{1,\dots,m\}$ 
and $g_j$ be as given in \eqref{eqn: g defn}. 
Combining Theorem~\ref{thm: Arnold Form} with
Lemma~\ref{lem:det lem} where 
	$\xi:=(\lam-\tlam_j)$, 
	$J:=J_{j}$ and $n:=n_j$ 
gives \eqref{eq:expand gj}.
Since 
	$\lam_{js}(\wt X) = 0$ 
for 
	$s=0,1, \dots, n_j-1$, 
we have 
	$g_j(\wt{X}) = 0$ 
and 
	\[
	 g_j'( X )  = \mbox{$ - \sum_{s=0}^{n_j-1}$} (n_j-s)  \elem{n_j-s-1}{\tlam_j}  \lam_{j s}'(X).
	\]
Let 
	$M\in \Cnn$ 
and 
	set
	$h :=\sum_{k=0}^{n_j-1} c_k 
	\elem{n_j-k-1}{\tlam_j}
	\in \Pcal{n_j-1}$. 
Then
\begin{align*}
	&\cip{(g_j'(\wt X))^\adj h}{M}_{\Cnn} \\
	& \ \ \	= \cip{h}{g_j'(\wt X)M}_{[n_j-1,\tlam_j]} \\
	& \ \ \	= -\cip{\mbox{$\sum_{k=0}^{n_j-1}$} c_k 
	\elem{n_j-k-1}{\tlam_j}
	}
	{ \left( \mbox{$\sum_{ s=0}^{n_j-1}$}(n_j-s) 
		\elem{n_j-s-1}{\tlam_j}
		\lam'_{j s}(\wt{X})
		\right) 
		M}_{[n_j-1,\tlam_j]} \\
	& \ \ \  =- \mbox{$\sum_{s=0}^{n_j-1}$}\cip{ c_s }{  (n_j-s) \lam'_{j s}(\wt{X})M}_{\C}  
	\\ & \ \ \	    = - \mbox{$\sum_{s=0}^{n_j-1}$} \cip{ (\lam'_{j s}(\wt{X}))^\adj  c_s }{  (n_j-s) M}_{\Cnn} \\
	& \ \ \  = - \mbox{$\sum_{s=0}^{n_j-1}$}\cip{(n_j-s)^{-1}\wt{P}^{*} J_{j s}^* \wt{P}^{-*} c_s }{  (n_j-s) M}_{\Cnn} 
		\qquad\mbox{(by Lemma~\ref{lem: lemma 2.12 in OSEM})} \\
%
	& \ \ \  =- \mbox{$\sum_{s=0}^{n_j-1}$}\cip{ \wt{P}^{*} J_{j s}^* \wt{P}^{-*} c_s }{ M}_{\Cnn}   
	\\ &\ \ \ =\cip{\left(- \mbox{$\sum_{s=0}^{n_j-1}$} \wt{P}^{*} J_{j s}^* \wt{P}^{-*} 
		\tau_{(n_j-s-1,\tlam_j)}\right)h }{ M}_{\Cnn},
	\end{align*} 
%
which proves \eqref{eqn: nabla g adjoint defn}.
Moreover, the  matrices 
	\(  \{\wt{P}^{*} J_{js}^{*} \wt{P}^{-*} \}_{s=0}^{n_j-1}  \) 
are linearly independent on $\Cnn$ (over $\C$ or $\R$) and 
$\tau_{(n_j-s-1,\tlam_j)}(h)=0$ for all $s=0, \dots, n_j-1$ 
only if $h=0$. 
Hence 
	$( g_j'(\wt X))^\adj h  =0$
only if 
	$h=0$,
implying  
	$( g_j'(\wt X))^\adj$ 
is injective.
\end{proof}

We now combine the functions $g_j$ into a single function 
	$\map{\Gee}{\C\times\Omega}{\Scal_{\tilp}}$, 
where 
	${\tilp}\in\Pcal{\tiln}$ 
is the polynomial in \eqref{p-factor} with 
	$\tXi:=\{\tlam_1,\dots,\tlam_m\}$ 
as in \eqref{eqn: distinct evals matrix}:
	\begin{equation}
	\label{eqn: big G defn}
	 \Gee (\zeta,X):=(\zeta,g_1(X),\dots,g_m(X)).
	\end{equation}
Note that 
	the mapping $\Gee$ depends on $\wt X$,
	is $\C$-linear (the identity on $\C$) in its first component,
	and satisfies
	\begin{equation}
	\label{G at X tilde}
		\Gee(0,\tX) = (0,0, \dots, 0) = 0.
	\end{equation}

\begin{theorem} [Nonderogatory Characteristic Factor Derivatives]
\label{thm: G is injective}
Let the hypotheses of Theorem \ref{thm: Arnold Form} hold.
Let  
	$ \Gee : \{\zeta:|\zeta|<1\}\times\Omega \to \Scal_{\tilp}$ 
be as  in \eqref{eqn: big G defn}. 
\vn
Let 
	$R: \C^{\tiln+1} \to \C \times \Cnn$ 
be the $\C$-linear transformation given by
	\begin{equation}
	\label{R defn}
	R(v) := \left(v_\ssub{0}, \  -
	\mbox{$\sum_{j=1}^m 
	 \sum_{s=0}^{n_j-1}$} v_{\ssub{js}} \wt{P}^{*} J_{js}^{*} \wt{P}^{-*} \right), 
	\end{equation}
for all 
	$v:=(v_\ssub{0}, v_{\ssub{10}}, \dots, v_\ssub{1{(n_1-1)}}, \dots, v_\ssub{m0}, \dots, v_\ssub{m{(n_m-1)}}) \in \C^{\tiln+1}$.
Then
	\begin{equation}
	\label{eqn: nabla big G}
	( \Gee' (\zeta,\wt X))^\adj =   
	R \circ\Tau_{\tilp}
	\end{equation}
with respect to the inner products $\cip{\cdot}{\cdot}_{\Scal_\tilp}$  (see \eqref{eq:ip on Sp})
and  \(
		\cip{(\zeta,X)}{(\omega,Y)}:=\cip{\zeta}{\omega}_{\C}+\cip{X}{Y}_{\Cnn}
	\)
	on
	\(
		\C\times\Cnn \),
where 
	$\Tau_{\tilp}$ 
is defined in
\eqref{eqn: Tau defn}. 
In addition,  
	\( ( \Gee' (\zeta,\wt X))^\adj \) is injective.
\end{theorem}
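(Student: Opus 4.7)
The plan is to unravel $(G'(\zeta,\wt X))^\adj$ by combining the per-block adjoint formula \eqref{eqn: nabla g adjoint defn} with the explicit description of $\cip{\cdot}{\cdot}_{\Scal_\tilp}$ via $\Tau_\tilp$. First I would observe that $\Gee(\zeta, X) = (\zeta, g_1(X),\dots, g_m(X))$ is $\C$-linear in $\zeta$ and depends on $X$ only through the $g_j$, so
\[
\Gee'(\zeta,\wt X)(\omega,M) = (\omega,\; g_1'(\wt X)M,\dots,\; g_m'(\wt X)M),
\]
and the adjoint will therefore decompose over the scalar coordinate and the $m$ polynomial blocks.

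Next I would exploit the fact that $\cip{\cdot}{\cdot}_{\Scal_\tilp}$ is, by \eqref{eq:ip on Sp}, the pullback under $\Tau_\tilp$ of the standard inner product on $\C^{\tiln+1}$, which splits as an orthogonal sum over the scalar coordinate and the $m$ blocks of Taylor coefficients at $\tlam_1,\dots,\tlam_m$. The $j$th such block is by construction exactly the inner product $\cip{\cdot}{\cdot}_{[n_j-1,\tlam_j]}$ appearing in Lemma~\ref{lem: little g defn and der adjoint}. Thus, for $u=(\mu_0,u_1,\dots,u_m)\in\Scal_\tilp$,
\[
\cip{\Gee'(\zeta,\wt X)(\omega,M)}{u}_{\Scal_\tilp}
= \cip{\omega}{\mu_0}_{\C} + \mbox{$\sum_{j=1}^m$}\cip{g_j'(\wt X)M}{u_j}_{[n_j-1,\tlam_j]}.
\]
Applying Lemma~\ref{lem: little g defn and der adjoint} to each summand converts this into $\cip{M}{\,-\sum_{j,s}\tau_{(n_j-s-1,\tlam_j)}(u_j)\,\wt P^{*}J_{js}^{*}\wt P^{-*}}_{\Cnn}$. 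Setting $v:=\Tau_\tilp u$, so that $v_0=\mu_0$ and $v_{js}=\tau_{(n_j-s-1,\tlam_j)}(u_j)$ (matching the ordering in \eqref{eqn: Tau defn}), the right-hand side equals $\cip{(\omega,M)}{R(v)}_{\C\times\Cnn}$. This identifies $(\Gee'(\zeta,\wt X))^\adj u = R(\Tau_\tilp u)$, proving \eqref{eqn: nabla big G}.

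For injectivity, since $\Tau_\tilp$ is a $\C$-linear isomorphism it suffices to show $R$ is injective. If $R(v)=0$, then $v_0=0$ and $\sum_{j,s} v_{js}\wt P^{*}J_{js}^{*}\wt P^{-*}=0$. The matrices $J_{js}$ are each supported in a single Jordan block of \eqref{eqn: P and J, 1}, and distinct $j$ correspond to disjoint diagonal blocks, so the full family $\{J_{js}\}_{j,s}$ is $\C$-linearly independent in $\Cnn$; conjugation by the invertible $\wt P$ and $\wt P^{-*}$ preserves this, forcing every $v_{js}=0$ and hence $v=0$. The main obstacle I anticipate is purely bookkeeping: aligning the index convention for $R$ (labelled by $s=0,\dots,n_j-1$) with the decreasing order of Taylor coefficients used in $\Tau_\tilp$ and in the adjoint formula of Lemma~\ref{lem: little g defn and der adjoint}; once that translation is pinned down, the rest is a direct substitution.
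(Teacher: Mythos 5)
Your proposal is correct and follows essentially the same route as the paper: the paper's own proof simply states that \eqref{eqn: nabla big G} "follows immediately" from Lemma~\ref{lem: little g defn and der adjoint} and that injectivity follows from the linear independence of the $J_{js}$ together with $\Tau_{\tilp}$ being a $\C$-linear isomorphism. Your write-up just fills in the details the paper leaves implicit — the block decomposition of $\Gee'$, the orthogonal splitting of $\cip{\cdot}{\cdot}_{\Scal_\tilp}$ into the scalar coordinate and the per-block inner products $\cip{\cdot}{\cdot}_{[n_j-1,\tlam_j]}$, and the index alignment $v_{js}=\tau_{(n_j-s-1,\tlam_j)}(u_j)$ — all of which check out.
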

\begin{proof}
The representation \eqref{eqn: nabla big G} follows immediately from 
Lemma \ref{lem: little g defn and der adjoint}. Injectivity follows from the linear
independence of the matrices 
	$J_{j s}\in\Cnn$ for $s=0,\dots,n_j-1$ 
and 
	$j=1,\dots,m$,
and the fact that
$\Tau_{\tilp}$ is a $\C$-linear isomorphism.  
\end{proof}


\section{Matrices: chain rule for the nonderogatory case}
\label{sec: nonderogatory case}
Let us now suppose that the eigenvalues in $\tXi$ (see \eqref{eqn: distinct evals matrix}) are the active 
roots (see \eqref{active roots}) of the characteristic polynomial $\Phi_n(\tX)$. 
We call
$\tXi$ the set of \emph{active eigenvalues} of $\vp$ at $\tX$, denoted by
$\Acalvp(\tX)$. 
The corresponding active indices are denoted by $\Icalvp(\tX)$.
In \eqref{p-factor}, $\tilp$ is called
the \emph{active factor} of the characteristic polynomial
	\begin{equation}
	\label{active factor defn}
	 \Phi_n(\tX) = 
	\Phi_{\tiln}(\wh{J}(\tX))  \Phi_{n_0}(B(\tX)) = \tilp\, \Phi_{n_0}(B(\tX)) , 
	\end{equation}
where 
	\[
	 \wh{J}(X):=\Diag(\wh{J}_1(X), \dots, \wh{J}_m(X)),
	\]
with $\wh{J}_j$ defined in \eqref{hat Jj defn}.
If all active eigenvalues are nonderogatory, we have
	\begin{equation*}
	\vp(X)
	=(\sff\circ F_\tilp\circ \Gee)(\zeta,X) 
	\end{equation*}
for all $X$ near $\tX$ and $\abs{\zeta}<1$. 
By \eqref{G at X tilde}, $F_\tilp ( G(0,\tX))=F_\tilp(0)=\tilp$.    
The regular and general subdifferentials of $\vp$
near $\tX$ can be obtained by computing the corresponding objects for the mapping
	$\sff\circ F_\tilp\circ \Gee$. 
The diagram in \eqref{diagram} illustrates the relationship between 
the mappings 
$\sff$, $F_{\tilp}$, $G$, and $\vp$, 
defined in
	\eqref{defn:prmf},
	\eqref{eqn: F sub p defn},
	\eqref{eqn: big G defn}, and
	\eqref{eqn: vp defn - original},
respectively.
We apply a nonsmooth chain rule to 
the representation $\sff\circ F_\tilp\circ \Gee$ to obtain
a formula for the subdifferential of $\vp$. 
\begin{theorem}[Nonsmooth Chain Rule]
\label{thm: chain rule}
\cite[Theorem 10.6]{rw:98a}
Suppose 
	$h(x) := \mathrm{g}(H(x))$
for a proper, lsc function 
	$\mathrm{g}: \R^m \to \eR$ 
and a smooth mapping 
	$H: \R^n \to \R^m$. 
Then, at any point $\tilde x \in \dom(h) = H^{-1}(\dom (\mathrm{g}))$, 
	\[ \rsd h(\tilde x) \supset  H'(\tilde x)^\adj \rsd \mathrm{g}(H(\tilde x)). \]
If 
	\begin{equation*}
	\label{eq:chain cq}
	\left( \ 
	y \in \sd^\infty \mathrm{g}(H(\tilde x))
	\mbox{ and } 
	( H'(\tilde x))^\adj y = 0
	\ \right )
	\Longrightarrow y=0,
	\end{equation*} 
then 
	\[\sd h (\tilde x) \subset ( H'(\tilde x))^\adj \sd \mathrm{g}(H(\tilde x))  
	\quad\mbox{and}\quad
	 \sd^\infty h(\tilde x)\subset ( H'(\tilde x))^\adj \sd^\infty \mathrm{g}(H(\tilde x)). \]
If, in addition, $\mathrm{g}$ is regular at $H(\tilde x)$, then $h$ is regular at $\tilde x$, 
	\[ \sd h(\tilde x) = ( H'(\tilde x))^\adj \sd \mathrm{g}(H(\tilde x))  
	\quad \mbox{and} \quad
	 \sd^\infty h(\tilde x) = ( H'(\tilde x))^\adj \sd^\infty \mathrm{g}(H(\tilde x)). \]
\end{theorem}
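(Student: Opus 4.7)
The plan is to reduce everything to a normal-cone chain rule for preimages under smooth maps. Introduce $F: \R^n \times \R \to \R^m \times \R$ by $F(x,\alpha) := (H(x),\alpha)$ and observe the set identity $\epi{h} = F^{-1}(\epi{\mathrm{g}})$. The approach is to push normal cones to $\epi{\mathrm{g}}$ backward through $F$, then translate via the standard epigraphical correspondences $z \in \rsd h(x) \Leftrightarrow (z,-1) \in \widehat{N}_{\epi{h}}(x,h(x))$ and analogously for $\sd h$ (with $N$) and $\sd^\infty h$ (with $(z,0)$-slices).

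First I would establish the ``easy'' inclusion $\rsd h(\tilde x) \supset H'(\tilde x)^\adj \rsd \mathrm{g}(H(\tilde x))$ by direct calculation. Given $z \in \rsd \mathrm{g}(H(\tilde x))$ and the first-order expansion $H(x) = H(\tilde x) + H'(\tilde x)(x-\tilde x) + o(\norm{x - \tilde x})$, substitute $y=H(x)$ into the regular subgradient inequality for $\mathrm{g}$. Local Lipschitz continuity of $H$ near $\tilde x$ absorbs the $o(\norm{H(x)-H(\tilde x)})$ remainder into $o(\norm{x - \tilde x})$, and $\ip{z}{H'(\tilde x)(x-\tilde x)}$ becomes $\ip{H'(\tilde x)^\adj z}{x-\tilde x}$, certifying $H'(\tilde x)^\adj z \in \rsd h(\tilde x)$.

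The reverse inclusions and regularity are the substantive part. I would invoke the normal-cone preimage rule under the CQ that $\ker F'(\tilde x, h(\tilde x))^\adj \cap N_{\epi{\mathrm{g}}}(F(\tilde x,h(\tilde x))) = \{0\}$, yielding $N_{\epi{h}}(\tilde x, h(\tilde x)) \subset F'(\tilde x, h(\tilde x))^\adj N_{\epi{\mathrm{g}}}(F(\tilde x, h(\tilde x)))$. Because $F'^\adj(y,\beta) = (H'(\tilde x)^\adj y,\, \beta)$ is block-diagonal, and because the normals to $\epi{\mathrm{g}}$ with vanishing last component are exactly the pairs $(y,0)$ with $y \in \sd^\infty \mathrm{g}(H(\tilde x))$, the abstract CQ reduces precisely to the stated one. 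Extracting the $(\cdot,-1)$ slice gives the $\sd h$ inclusion, and the horizon slice (using $\beta_\nu \downarrow 0$ with $\beta_\nu z^\nu \to z$) gives the $\sd^\infty h$ inclusion.

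For the regularity assertion, if $\mathrm{g}$ is subdifferentially regular at $H(\tilde x)$ then $\epi{\mathrm{g}}$ is Clarke regular there, so $\widehat{N}_{\epi{\mathrm{g}}} = N_{\epi{\mathrm{g}}}$; combining the forward inclusion for $\widehat{N}_{\epi{h}}$ with the backward inclusion for $N_{\epi{h}}$ collapses both to equality at $(\tilde x, h(\tilde x))$, delivering regularity of $h$ along with the claimed subdifferential and horizon-subdifferential equalities. The main obstacle will be the sequential limit argument underlying the normal-cone preimage rule: given $w^\nu \to w$ with $w^\nu \in \widehat{N}_{\epi{h}}(x^\nu, h(x^\nu))$, one must produce approximating dual vectors $v^\nu \in \widehat{N}_{\epi{\mathrm{g}}}(H(x^\nu), h(x^\nu))$ with $F'(x^\nu, h(x^\nu))^\adj v^\nu \to w$ and $\{v^\nu\}$ bounded. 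The CQ is exactly what rules out $\norm{v^\nu} \to \infty$, since any blow-up would yield via normalization a nonzero element of $\sd^\infty \mathrm{g}(H(\tilde x))$ killed by $H'(\tilde x)^\adj$, contradicting the hypothesis; boundedness then allows extraction of a convergent subsequence and closes the argument.
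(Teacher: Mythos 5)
The paper gives no proof of this theorem at all: it is quoted directly from Rockafellar--Wets \cite[Theorem 10.6]{rw:98a}, so the only meaningful comparison is with the proof in that reference---and your proposal is essentially a reconstruction of it. The reduction is correct and is exactly how the cited source argues: with $F(x,\alpha):=(H(x),\alpha)$ one has $\epi{h}=F^{-1}(\epi{\mathrm{g}})$; the correspondences $z\in\rsd h(\tilde x)\Leftrightarrow (z,-1)\in\widehat N_{\epi{h}}(\tilde x,h(\tilde x))$, $z\in\sd h(\tilde x)\Leftrightarrow (z,-1)\in N_{\epi{h}}(\tilde x,h(\tilde x))$, and $z\in\sd^\infty h(\tilde x)\Leftrightarrow (z,0)\in N_{\epi{h}}(\tilde x,h(\tilde x))$ (\cite[Theorem 8.9]{rw:98a}) convert a normal-cone rule for preimages into the three subdifferential statements; since $F'(\tilde x,h(\tilde x))^\adj(y,\beta)=(H'(\tilde x)^\adj y,\beta)$, the abstract condition $\ker F'(\tilde x,h(\tilde x))^\adj\cap N_{\epi{\mathrm{g}}}(H(\tilde x),h(\tilde x))=\{0\}$ is precisely the stated constraint qualification, because the normals to $\epi{\mathrm{g}}$ with vanishing last component are exactly the pairs $(y,0)$ with $y\in\sd^\infty\mathrm{g}(H(\tilde x))$; and Clarke regularity of $\epi{\mathrm{g}}$ propagating to $\epi{h}$ gives the equality case. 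Your direct verification of the elementary inclusion $\rsd h(\tilde x)\supset H'(\tilde x)^\adj\rsd\mathrm{g}(H(\tilde x))$ is also fine.

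The genuine weakness is in your final paragraph, where you sketch a proof of the preimage rule itself. As stated, the plan would fail: given $w^\nu\in\widehat N_{\epi{h}}(x^\nu,h(x^\nu))$, one cannot in general produce $v^\nu\in\widehat N_{\epi{\mathrm{g}}}(H(x^\nu),h(x^\nu))$ with $F'(x^\nu,h(x^\nu))^\adj v^\nu\to w$, because regular normals to a preimage are \emph{not} pullbacks of regular normals to the target set, even under the constraint qualification. (Example: let $D\subset\R^2$ be the union of the two coordinate axes and $F(x)=(x,x)$; then $F^{-1}(D)=\{0\}$ has regular normal cone $\R$ at $0$, while $\widehat N_D(0)=\{0\}$, so the pullback of regular normals is $\{0\}$; only the \emph{limiting} cone $N_D(0)$, which contains both axes, is large enough.) The correct intermediate claim is the pointwise inclusion $\widehat N_{F^{-1}(D)}(x^\nu)\subset F'(x^\nu)^\adj N_D(F(x^\nu))$ with the limiting cone on the right, valid when the constraint qualification holds at $x^\nu$ (which it does for large $\nu$ by outer semicontinuity of $N_{\epi{\mathrm{g}}}$). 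Establishing that inclusion is the real content of the preimage rule, and it needs a separate technique---realizing the regular normal as the gradient of a smooth function maximized over the preimage and running a penalization/multiplier argument, as in \cite[6.11 and the proof of 6.14]{rw:98a}. Your normalization argument via the constraint qualification correctly shows the resulting multiplier sequences are bounded, but it does not create the multipliers in the first place. In short: if you cite the preimage rule \cite[Theorem 6.14]{rw:98a}, your proof is complete and coincides with the textbook's; if you intend to prove it, that step is the missing piece.
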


\begin{theorem}[Chain Rule for $\vp$]
\label{thm:vp is subdifferentially regular}
Let $f:\C\to \eR$ be proper, convex, lsc and 
let $\vp$ be as in \eqref{eqn: vp defn - original}. 
Suppose that
	$\wt{X}\in\Cnn$ 
is such that the notation of Section~\ref{sec: derivative of characteristic factor} holds,
$f$ satisfies either \eqref{f-twice-diffable-and-psd} or \eqref{rspan is C}
at $\tlam_j$ for each $j\in \{1, \dots, m\}$,
and 
	$\Phi_n(\wt X)$ 
has active factor 
	\(
		\til p 
	\)
given by \eqref{p-factor} and satisfying \eqref{eq:not zero}. 
If $\tlam_j$ is nonderogatory for all
$j \in \{1,\dots,m\}$, then
	\[
		\rsd \vp(\wt{X}) \supset  \set{Y\in\Cnn}{ (0,Y)\in R(D_{\tilp})},
	\]
where 
	$D_{\til p}$ and $R$ are 
as defined in Theorem \ref{Thm: Burke-Eaton 12} and \eqref{R defn}, respectively.
If $f$ satisfies \eqref{f-twice-diffable-and-psd} at $\tlam_j$ for all $j\in\{1, \dots, m\}$, then $\vp$ is subdifferentially regular at 
$\wt X$ with
	\begin{equation*}
	\begin{aligned}
	\sd \vp(\wt{X})&= 
	\set{Y\in\Cnn}{ (0,Y)\in R(D_{\tilp})},
	\quad\mbox{and} \\  
	\sd^\infty \vp(\wt{X}) &=
	\set{Y\in\Cnn}{ (0,Y)\in R(D_{\tilp}^\infty)},  
	 \end{aligned}
	 \end{equation*}
where $D_{\til p}^\infty$ is as defined in Theorem \ref{Thm: Burke-Eaton 12}.
\end{theorem}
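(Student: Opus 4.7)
The plan is to apply the nonsmooth chain rule (Theorem~\ref{thm: chain rule}) to the local representation $\vp = \sff \circ H$, where $H := F_\tilp \circ \Gee(0, \cdot) : \Cnn \to \Pcal{[\tiln]}$. First I would verify this representation: by the definition of $F_\tilp$ in \eqref{eqn: F sub p defn} (taking $q_0 = 0$) and the definition of $g_j$ in Lemma~\ref{lem: little g defn and der adjoint}, one has $F_\tilp(\Gee(0, X)) = \prod_{j=1}^m \Phi_{n_j}(\wh J_j(X)) = \Phi_{\tiln}(\wh J(X))$, which is the active factor of $\Phi_n(X)$ from \eqref{active factor defn}. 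The hypotheses (either \eqref{f-twice-diffable-and-psd} or \eqref{rspan is C}, together with \eqref{eq:not zero}) imply continuity of $f$ at each $\tlam_j$, so for $X$ close to $\wt X$ the maximum defining $\vp(X)$ is attained at a root of this active factor, while the eigenvalues of $B(X)$ stay inactive by continuity. Hence $\vp(X) = \sff(H(X))$ locally.

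Next I would compute $H'(\wt X)^\adj$ by factoring $H = F_\tilp \circ \Gee \circ \iota$ with the $\C$-linear embedding $\iota(X) := (0, X)$, so that $H'(\wt X)^\adj = \iota^\adj \circ \Gee'(0, \wt X)^\adj \circ F'_\tilp(0)^\adj$. Using \eqref{F' inverse is F' adjoint} to replace $F'_\tilp(0)^\adj$ by $F'_\tilp(0)^{-1}$, Theorem~\ref{thm: G is injective} to replace $\Gee'(0, \wt X)^\adj$ by $R \circ \Tau_\tilp$, and the formula $\rsd \sff(\tilp) = F'_\tilp(0)\Tau_\tilp^{-1}(D_\tilp)$ from Theorem~\ref{Thm: Burke-Eaton 12}, the composition telescopes to give $H'(\wt X)^\adj \rsd \sff(\tilp) = \iota^\adj R(D_\tilp)$. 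Because the first coordinate of $R(v)$ is $v_0$ by \eqref{R defn}, and every $v \in D_\tilp$ has $v_0 = 0$ by construction in Theorem~\ref{Thm: Burke-Eaton 12}, the set $R(D_\tilp)$ lies in $\{0\} \times \Cnn$, so $\iota^\adj R(D_\tilp) = \{Y \in \Cnn : (0, Y) \in R(D_\tilp)\}$. The inclusion part of Theorem~\ref{thm: chain rule} then yields the first assertion of the theorem.

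For the regularity and equality statements under hypothesis \eqref{f-twice-diffable-and-psd}, Theorem~\ref{Thm: Burke-Eaton 12} already gives that $\sff$ is subdifferentially regular at $\tilp$ with $\sd^\infty \sff(\tilp) = F'_\tilp(0)\Tau_\tilp^{-1}(D_\tilp^\infty)$, so it only remains to verify the chain-rule qualification condition: $y \in \sd^\infty \sff(\tilp)$ and $H'(\wt X)^\adj y = 0$ should force $y = 0$. Writing $y = F'_\tilp(0)\Tau_\tilp^{-1} v$ with $v \in D_\tilp^\infty$, the same telescoping shows $H'(\wt X)^\adj y = \iota^\adj R(v) = -\sum_{j,s} v_{js}\,\wt P^* J_{js}^* \wt P^{-*}$; linear independence of the matrices $\{\wt P^* J_{js}^* \wt P^{-*}\}_{j,s}$ (the same fact underlying the injectivity assertion of Theorem~\ref{thm: G is injective}) forces every $v_{js} = 0$, and combined with $v_0 = 0$ this gives $v = 0$, hence $y = 0$. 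The final two conclusions of Theorem~\ref{thm: chain rule} then yield the regularity of $\vp$ at $\wt X$ and the claimed equalities for $\sd \vp(\wt X)$ and $\sd^\infty \vp(\wt X)$.

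The principal technical obstacle I anticipate is the bookkeeping around the auxiliary $\C$-factor in the domain of $\Gee$: it is introduced precisely so that $\Gee'(0, \wt X)^\adj$ admits the clean description $R \circ \Tau_\tilp$ mapping into $\C \times \Cnn$, but the chain-rule output must then be pulled back to $\Cnn$ via $\iota^\adj$, and the final set-valued formulas only match the statement because $v_0 = 0$ for every $v \in D_\tilp$ and $v \in D_\tilp^\infty$. Keeping these coordinates aligned—together with carefully tracking which inner products and which notion of adjoint apply to each arrow of the diagram \eqref{diagram}—is the delicate point; once this alignment is in place, the argument is essentially a direct application of the nonsmooth chain rule combined with the polynomial result Theorem~\ref{Thm: Burke-Eaton 12}.
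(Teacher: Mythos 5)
Your proof is correct, and it rests on the same pillars as the paper's --- the decomposition in diagram \eqref{diagram}, the polynomial formulas of Theorem~\ref{Thm: Burke-Eaton 12}, the adjoint formula of Theorem~\ref{thm: G is injective} together with \eqref{F' inverse is F' adjoint}, all fed into the chain rule of Theorem~\ref{thm: chain rule} --- but it handles the auxiliary $\C$-coordinate of $\Scal_{\tilp}$ by a genuinely different mechanism. The paper keeps $\zeta$ as a live variable: it applies the chain rule to $\hat\vp(\zeta,X):=(\sff\circ F_{\tilp}\circ\Gee)(\zeta,X)$ on $\{\zeta:\abs{\zeta}<1\}\times\Omega$, where the relevant adjoint $R\circ\Tau_{\tilp}\circ(F'_{\tilp}(0))^{-1}$ is injective, so the qualification condition is immediate; it then descends from $\hat\vp$ to $\vp$ by partial subdifferentiation \cite[Corollary 10.11]{rw:98a}, using that $\hat\vp$ is constant in $\zeta$. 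You instead pre-compose with the embedding $\iota(X):=(0,X)$ and apply the chain rule directly on $\Cnn$. The price is that your adjoint $H'(\wt X)^\adj=\iota^\adj\circ R\circ\Tau_{\tilp}\circ(F'_{\tilp}(0))^{-1}$ is \emph{not} injective: since $F'_{\tilp}(0)(\om_0,0,\dots,0)=\om_0\tilp$ by \eqref{eqn: F nabla}, its kernel is exactly $\C\,\tilp$. So, unlike the paper, you cannot obtain the qualification condition from injectivity alone, and the finer argument you give --- writing $y=F'_{\tilp}(0)\Tau_{\tilp}^{-1}v$ with $v\in D_{\tilp}^\infty$, using linear independence of the matrices $\wt P^*J_{js}^*\wt P^{-*}$ to force $v_{js}=0$, and then invoking the structural fact that $v_0=0$ for every $v\in D_{\tilp}^\infty$ --- is precisely what is needed and is correct; the same fact $v_0=0$ on $D_{\tilp}$ is also what justifies identifying $\iota^\adj R(D_{\tilp})$ with $\set{Y\in\Cnn}{(0,Y)\in R(D_{\tilp})}$. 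In short, the paper's route trivializes the constraint qualification and isolates the $\zeta$-bookkeeping in one citation, while yours dispenses with the auxiliary function $\hat\vp$ and \cite[Corollary 10.11]{rw:98a} entirely at the cost of the kernel analysis. One caveat applying equally to both arguments: the local identity $\vp=\sff\circ H$ near $\wt X$ is asserted with only a gesture toward why perturbed inactive eigenvalues remain inactive (what is needed is that $f$ not jump upward near the inactive spectrum, which is not implied by your appeal to continuity of $f$ at the active $\tlam_j$); since the paper states this identity without proof, this is not a gap relative to the paper, and note that your first (inclusion) assertion needs only $\vp\ge\sff\circ H$ locally with equality at $\wt X$, which holds automatically because the roots of the active factor form a subset of the spectrum.
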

\begin{proof}
We prove only the second statement 
since the proof of the first statement is similar.
Define  $\map{H}{\{\zeta:\abs{\zeta}<1\} \times\Omega}{\Pcal{\tiln}}$ by $H:=F_{\tilp} \circ \Gee $ so that, 
in particular, 
	\( 
	H(0,\tX) = F_{\tilp}(G(0,\tX)) = F_{\tilp}(0)=\tilp .
	\)
By Theorem~\ref{thm: G is injective}, the adjoint of the derivative of $H$
at $(0,\tX)$ is given by     
	\begin{equation*}
	\begin{aligned}
		( H'(\zeta,\tX))^{\adj} 
		& = (G'(\zeta,\tX))^\adj \circ (F_{\tilp}'(G(\zeta,\tX)))^\adj
		\\ &=R \circ \Tau_{\tilp}  \circ (F_{\tilp}'(G(\zeta,\tX)))^\adj   && \mbox{by \eqref{eqn: nabla big G}}.
	\end{aligned}
	\end{equation*}
Therefore,
	\begin{equation}\label{H adjoint}
	\begin{aligned}
		( H'(0,\tX))^{\adj} 
		& = R \circ \Tau_{\tilp}\circ (F_{\tilp}'(0))^{\star} && \mbox{by \eqref{G at X tilde}}
		\\ & = R \circ \Tau_{\tilp} \circ (F_{\tilp}'(0))^{-1}&& \mbox{by \eqref{F' inverse is F' adjoint}}.
	\end{aligned}
	\end{equation}
Since $\Tau_{\tilp}$, $R$ and $F'_\tilp(0)$ are injective, so is $(H'(0,\wt X))^{\adj}$.

Define $\map{\hat\vp}{\{\zeta:\abs{\zeta}<1\} \times\Omega}{\eR}$ by 
	\[
		\hat\vp(\zeta,X) := (\sff \circ H)(\zeta,X) \quad \forall \ (\zeta, X) \in \{\zeta:\abs{\zeta}<1\} \times\Omega,
	\]
so that 
	\begin{equation}
	\label{H property} 
	\vp(X)\! =\! \hat\vp(0,X)\!=\! \hat\vp(\zeta,X)\!=\! (\sff \circ H)(\zeta,X)
	\quad\! \forall \
	(\zeta, X) \in \{\zeta:\abs{\zeta}<1\} \times\Omega.
	\end{equation}
Take $\mathrm{g} := \sff$ and $h := \hat\vp$ in Theorem~\ref{thm: chain rule}.
Then by Theorems~\ref{Thm: Burke-Eaton 12} and~\ref{thm: chain rule}, $\hat\vp$ is subdifferentially regular at $(\zeta,\tX)$ and 
	\begin{equation*}
	\begin{aligned}
	  \sd \hat\vp(\zeta,\tX) 
						   &=\sd \hat\vp(0,\tX) 	&&\mbox{by \eqref{H property}}\\
	  					   & = ( H'(0, \tX) )^\adj \sd \sff ( H(0, \tX ) )   \\
						   & = R \circ \Tau_{\tilp} \circ (F_{\tilp}'(0))^{-1}  \sd \sff ( H(0, \tX ) )   
						   &&\mbox{by \eqref{H adjoint}}\\
						   & = R \circ \Tau_{\tilp} \circ (F_{\tilp}'(0))^{-1}  F'_{\tilp}(0)\circ\Tau_{\tilp}^{-1}  (D_{\tilp}) 
						   &&\mbox{by \eqref{polynomial subdifferential}}\\
						   & = R(D_{\tilp}). 
		\end{aligned}
	\end{equation*}
Similarly,
	  \( \sd^\infty\hat\vp(\zeta,\tX)  = R(  D_{\tilp}^\infty).  \)
By \eqref{H property}, $\hat\vp(\zeta,\tX)$ is constant on $\{\zeta:\abs{\zeta}<1\}$ so
\cite[Corollary 10.11]{rw:98a} on partial subdifferentiation tells us that
the first component of every element of $\sd \hat\vp(0,\tX)$ is zero.
Hence, the final result also follows from \cite[Corollary 10.11]{rw:98a}.
\end{proof}
\begin{remark}
This result recovers the subdifferential regularity of the spectral abscissa \cite[Theorem 8.1]{burke-overton:01b} and
the formula for its subdifferential in \cite[Theorem 7.2]{burke-overton:01b}
in the case of nonderogatory active eigenvalues.
\end{remark}

\begin{corollary}[Explicit Subdifferential Representation]
\label{cor:nonderogatory explicit}
Assume the hypotheses of Theorem~\ref{thm:vp is subdifferentially regular}  
with \eqref{f-twice-diffable-and-psd} holding.
Then
	$Y \in \sd\vp(\wt X)$
if and only if there exists
 	$U_j \in \C^{n_j\times n_j}$ for $j\in\{1, \dots, m\}$ 
such that 
	\[ \wt P^{-*} Y \wt{P} = \Diag(0_{n_0\times n_0}, U_1 \dots, U_m), \]  
where 
$U_j$ is lower triangular Toeplitz with diagonal entries 
	$-\mu_{j1}$, subdiagonal entries $-\mu_{j2}$, 
and parameters $\gam_j\ge 0$
satisfying	$\sum_{j=1}^m \gam_j=1$, 
\[	
\mu_{j1} \!=\!\gam_j  (\nabla f(\tlam_j)/ n_j)\ \text{and}\ 
\langle {  - \mu_{j2} },{ \nabla f( \tlam_j)^2} \rangle_\C\! \leq \!
	 (\gam_j / n_j ) f''(\tlam_j;  \sfi \nabla f ( \tlam_j),  \sfi \nabla f (\tlam_j)).
\]
\end{corollary}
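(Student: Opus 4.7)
The plan is to take Theorem~\ref{thm:vp is subdifferentially regular} (under hypothesis \eqref{f-twice-diffable-and-psd}) as a black box and simply unfold the right-hand side
\[
\sd\vp(\wt X)=\{Y\in\Cnn : (0,Y)\in R(D_{\tilp})\}
\]
into the matrix-entry description in the statement. So the work is entirely about (i) writing out a general element of $D_{\tilp}$ under \eqref{f-twice-diffable-and-psd}, (ii) evaluating $R$ on it, and (iii) undoing the similarity by $\wt P$ to read off the block Toeplitz structure. Since all eigenvalues in $\tXi$ are active, $\Ical_\sff(\tilp)=\{1,\dots,m\}$ and Theorem~\ref{Thm: Burke-Eaton 12} gives
$\Gamma(n_j,\tlam_j)=\bigl\{-\nabla f(\tlam_j)/n_j\bigr\}\times \Dcal(n_j,\tlam_j)\times \C^{n_j-2}$.

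A generic $v\in D_{\tilp}$ arises as a convex combination of one-block-supported elements of $\{0\}\times \bigtimes_{j=1}^{m}\Gamma(n_j,\tlam_j)$, so it is parameterized by weights $\gam_j\ge 0$ with $\sum_j\gam_j=1$ together with block-$j$ entries
\[
v_{j0}=-\gam_j\nabla f(\tlam_j)/n_j,\qquad v_{j1}\in\gam_j\Dcal(n_j,\tlam_j),\qquad v_{j2},\dots,v_{j(n_j-1)}\in\C,
\]
and with $v_0=0$. Plugging into \eqref{R defn} and using that $R$ is $\C$-linear,
\[
Y=-\mbox{$\sum_{j=1}^{m}\sum_{s=0}^{n_j-1}$} v_{js}\,\wt P^{*}J_{js}^{*}\wt P^{-*},
\]
so conjugating by $\wt P^{-*}$ on the left and $\wt P^{*}$ on the right pulls $Y$ into the Jordan frame:
$\wt P^{-*}Y\wt P^{*}=-\sum_{j,s}v_{js}J_{js}^{*}$. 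Each $J_{js}^{*}$ is block-diagonal with $(N_j^{*})^{s}$ in the $j$-th Jordan block and zeros elsewhere, so the $\no\times\no$ block corresponding to $\wt B$ vanishes, and the $j$-th block becomes the lower-triangular Toeplitz matrix $U_j$ with first column $(-v_{j0},-v_{j1},\dots,-v_{j(n_j-1)})^T$. This is exactly the block structure claimed, with the identifications $\mu_{j1}=-v_{j0}=\gam_j\nabla f(\tlam_j)/n_j$ and $\mu_{j2}=-v_{j1}$.

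Finally, the membership $v_{j1}\in \gam_j\Dcal(n_j,\tlam_j)$ is, by the definition of $\Dcal$ in Theorem~\ref{Thm: Burke-Eaton 12} and positive homogeneity of the scalar in front, the inequality
\[
\langle v_{j1},\nabla f(\tlam_j)^2\rangle_{\C}\le (\gam_j/n_j)\,\langle \sfi\nabla f(\tlam_j),\nabla^2 f(\tlam_j)(\sfi\nabla f(\tlam_j))\rangle_{\C},
\]
which, after substituting $-\mu_{j2}$ for $v_{j1}$ (up to a sign to be tracked carefully) and recognizing the right-hand side as $(\gam_j/n_j)f''(\tlam_j;\sfi\nabla f(\tlam_j),\sfi\nabla f(\tlam_j))$ via \eqref{real sense derivatives}, is precisely the inequality displayed in the corollary. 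The remaining entries $v_{j2},\dots,v_{j(n_j-1)}$ are free in $\C$, so the subdiagonals of $U_j$ below the first are unconstrained, matching the lower-triangular Toeplitz description.

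The only real obstacle is bookkeeping: extracting the $\gam_j$-parameterization from the $\conv$ in the definition of $D_{\tilp}$, and then tracking signs and Hermitian adjoints through $R$ so that the matrix-entry identifications $\mu_{j1}=\gam_j\nabla f(\tlam_j)/n_j$ and the Toeplitz diagonal entry $-\mu_{j1}$ are consistent with $R(v)=-\wt P^{*}(\sum v_{js}J_{js}^{*})\wt P^{-*}$. Everything else is a direct consequence of block structure and the fact that $\{J_{js}^{*}\}_{j,s}$ are supported on disjoint Jordan blocks with the correct subdiagonal shift pattern.
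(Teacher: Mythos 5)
The paper gives no proof of this corollary---it is stated as the direct unfolding of Theorem~\ref{thm:vp is subdifferentially regular} through the definitions of $R$ in \eqref{R defn} and $D_{\tilp}$ in Theorem~\ref{Thm: Burke-Eaton 12}---and that unfolding is exactly your plan; your central computation, $\wt P^{-*}Y\wt P^{*}=-\sum_{j,s}v_{js}J_{js}^{*}$, hence a zero $n_0\times n_0$ block and lower triangular Toeplitz blocks with first column $(-v_{j0},\dots,-v_{j(n_j-1)})^{T}$, is correct. The trouble is that the two items you explicitly defer (``up to a sign to be tracked carefully,'' ``bookkeeping'') are precisely where the argument is unfinished, and neither resolves itself. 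With your own identifications $\mu_{j1}=-v_{j0}=\gam_j\nabla f(\tlam_j)/n_j$ and $\mu_{j2}=-v_{j1}$, your computation puts $+\mu_{j1}$ on the diagonal and $+\mu_{j2}$ on the subdiagonal, and it does so in the frame $\wt P^{-*}Y\wt P^{*}$, whereas the statement asserts entries $-\mu_{j1}$, $-\mu_{j2}$ in the frame $\wt P^{-*}Y\wt P$. Since $\nabla f(\tlam_j)\neq 0$ and some $\gam_j>0$, the entries $+\mu_{j1}$ and $-\mu_{j1}$ genuinely differ, so no bookkeeping can reconcile your (correct) output with the printed statement. The cross-check is Theorem~\ref{thm: new 7.2 - derogatory}, which the paper says recovers this corollary: by \eqref{W defn}, \eqref{eqn: sigj defn - max} and \eqref{eqn: sig 1}, the diagonal of $W_j=\wt P^{-*}Y\wt P^{*}$ must equal the \emph{nonnegative} multiple $\sig_j\nabla f(\tlam_j)=\gam_j\nabla f(\tlam_j)/n_j$. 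So what you have proved is the sign-corrected variant (and the printed corollary carries sign/adjoint typos); a finished proof must carry out this reconciliation and say so, rather than claim the structures agree ``exactly.''

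The second gap is your parameterization of $D_{\tilp}$, which is asserted rather than derived and fails at degenerate weights. Read literally, $\{0\}\times\bigtimes_{j=1}^m\Gamma(n_j,\tlam_j)$ is a Cartesian product of convex sets (under \eqref{f-twice-diffable-and-psd} each $\Gamma(n_j,\tlam_j)$ is a singleton times a half-plane times $\C^{n_j-2}$), so its convex hull is itself, no weights $\gam_j$ appear at all, and every active block would be forced to have first coefficient exactly $-\nabla f(\tlam_j)/n_j$---contradicting the corollary and Theorem~\ref{thm: new 7.2 - derogatory}. Your reading, convex combinations of elements supported on a single block, is the one that makes the statement true, but it must be justified against \cite{burke-eaton:12} rather than assumed. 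Moreover, even granting it, your claimed equivalence $v_{j1}\in\gam_j\Dcal(n_j,\tlam_j)\iff\langle v_{j1},\nabla f(\tlam_j)^2\rangle_{\C}\leq(\gam_j/n_j)\,f''(\tlam_j;\sfi\nabla f(\tlam_j),\sfi\nabla f(\tlam_j))$ holds only when $\gam_j>0$: at $\gam_j=0$ the left-hand side is $\{0\}$ while the right-hand side is a half-plane, and similarly $\gam_j\C=\{0\}$, so a true convex combination with $\gam_j=0$ annihilates the entire $j$th block. The set described by the corollary---nonzero $\mu_{j2}$ in the half-plane and free lower entries even when $\gam_j=0$---is the \emph{closed} convex hull, obtained by adjoining the recession directions $\Gamma(n_j,\tlam_j)^\infty$; this closure step is exactly what is needed for the ``if'' direction of the equivalence, it is forced in any case because $D_{\tilp}$ is the preimage of the closed set $\rsd\sff(\tilp)$ under the linear isomorphism $F'_{\tilp}(0)\circ\Tau_{\tilp}^{-1}$, and it is absent from your argument.
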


\section{Matrices: the general case}
\label{sec: general case}
In Section~\ref{sec: nonderogatory case}, we use the Arnold form (see Theorem~\ref{thm: Arnold Form}) to simultaneously
derive a representation for the subdifferential and establish the subdifferential regularity 
of convexly generated spectral max functions at matrices with nonderogatory active eigenvalues. 
In this section, we extend techniques developed in \cite[Theorem 7.2]{burke-overton:01b}
for the spectral abscissa to derive a representation for the regular subdifferential of spectral max functions
even when some active eigenvalues are derogatory. 
We do not derive formulas for the general (limiting) subdifferential (see \eqref{general-sd-formula}). 
Formulas in the derogatory case are, for the most part, 
unknown even for the spectral abscissa\footnote{Grundel and Overton derive the general subdifferential in the simplest derogatory, defective 
case when $n=3$ for the spectral abscissa in \cite{grundel:13}. The nondefective (or semisimple) case is treated in \cite[Theorem 8.3]{burke-overton:01b}.}.

\subsection{Review of results from \cite{burke-overton:01b}}
\label{ss: review}
Define 
	\(\Lam: \Cnn \to \C^n\) 
to be the mapping that takes a matrix $X$ to its vector of eigenvalues, repeated according to algebraic multiplicity and ordered lexicographically (see \S\ref{factorization space section}).  
Any function of the form
	\( \psi:= \mathrm{h} \circ \Lam \),
where
	\( \mathrm{h}: \C^n \to \eR \) is invariant under permutations of its arguments,
is a spectral function (see \S\ref{intro}).  

Now assume that the set
	$\tXi  = \{  \tlam_1, \dots,  \tlam_m\}$ in \eqref{eqn: distinct evals matrix}
is the complete set of distinct eigenvalues of  
	$\wt X \in \Cnn$
	with algebraic multiplicities $n_1, \dots, n_m$ (so that $n=\sum_{j=1}^m n_j$) and geometric multiplicities $q_1, \dots, q_m$.
We use the Jordan form notation described in \eqref{eqn: P and J, 1} and \eqref{Nilpotent block jk},
where the matrix $\wt B$ is no longer present.
Recall the following results from \cite{burke-overton:01b}.

\begin{theorem}
\label{theorems A B}
\cite[Theorems 4.1 and 4.2]{burke-overton:01b} 
Let 
	$\wt X$ 
be as given above and let $\psi$ be a spectral function.
If $Y$ is a subgradient or horizon subgradient of $\psi$ at $\wt X$, then 
	\begin{equation}
	\label{W defn}
		W := \wt P^{-*} Y \wt P^*
	\end{equation}
satisfies
	\begin{equation}
	\label{W block diag}
		W = \Diag(W_1, \dots, W_m), 
	\end{equation}
where	
	\begin{equation}
		\label{eqn: Wj}
		W_j :=
 		\left[ 
		\begin{array}{c  c c}
		 W^{(11)}_{j}	& \cdots & W^{ ( 1 q_j ) }_{ j }  \\ 
			\vdots	& \ddots  &\vdots \\ 
		 W^{(q_j  1)}_{j}&  \cdots & W^{ ( q_j q_j ) }_{j}  \\ 
		\end{array} \right],
	\end{equation}
with
	\( W^{(rs)}_{j} \) 
a rectangular 
	\( m_{jr}\times m_{js} \) 
lower triangular Toeplitz matrix for 
	\( r=1, \dots, q_j , \)
	\( s=1, \dots, q_j , \) 
	\( j=1, \dots, m . \) 

If $Y$ is further assumed to be a regular subgradient of $\psi$  at $\wt X$, 
then 
	\begin{equation}
	\label{eqn: Y and W, 2}
		W_j = \Diag(W^{(11)}_j, \dots, W^{( q_j q_j ) }_j ),
	\end{equation}
where 
	\( W^{(kk)}_{j} \) 
is an 
	\( m_{j k}  \times m_{jk}\)
lower triangluar Toeplitz matrix with diagonal  
	\( \theta_{j1}   \) 
and subdiagonals  
	\( \theta_{js} , \) 
$s=2,\dots,m_{jk},\ k=1,\dots,q_j,\ j=1,\dots,m$,
and $\sum_{k=1}^{q_j} m_{jk}=n_j$.

\end{theorem}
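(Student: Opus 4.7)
The plan is to exploit two invariances satisfied by every spectral function $\psi$: (i) similarity invariance, $\psi(QXQ^{-1})=\psi(X)$ for every invertible $Q$, and (ii) permutation invariance of the eigenvalues, which in particular allows arbitrary reorderings of equal-size Jordan blocks attached to the same eigenvalue. Combined with the Arnold form (Theorem~\ref{thm: Arnold Form}) as a local coordinatization near $\wt X$, these invariances impose the required algebraic structure on any (regular/general/horizon) subgradient.

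For the first assertion, I would first observe that for any $V\in\Cnn$ the isospectral curve $X(t):=e^{tV}\wt X e^{-tV}$ satisfies $\psi(X(t))\equiv\psi(\wt X)$ and $X'(0)=[V,\wt X]$. Applied to a regular subgradient $Y\in\rsd\psi(\wt X)$, the subgradient inequality along $\pm V$ gives $\Realpart\langle Y,[V,\wt X]\rangle=0$ for all $V$, which is the matrix identity $\wt X Y^{*}=Y^{*}\wt X$. Pulling this back by $\wt P$ via \eqref{W defn} yields $WJ^{\ast}=J^{\ast}W$. Limits preserve this identity, so both general and horizon subgradients satisfy the same commutation relation. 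Since the blocks $J_j^{\ast}$ carry the distinct eigenvalues $\bar{\tlam}_j$, standard commutant theory forces the block-diagonal structure \eqref{W block diag}. Within each $W_j$, commutation with $\Diag(J_j^{(1)\ast},\dots,J_j^{(q_j)\ast})$ — a direct sum of nilpotent Jordan blocks sharing the single eigenvalue $\bar\tlam_j$ — forces each sub-block $W_j^{(rs)}$ to be a rectangular lower-triangular Toeplitz matrix, since such intertwiners between two Jordan blocks with a common eigenvalue are exactly the polynomials in the shift. This gives \eqref{eqn: Wj}.

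For the refinement to regular subgradients, the plan is to test the inequality $\psi(X)\ge\psi(\wt X)+\Realpart\langle Y,X-\wt X\rangle+o(\|X-\wt X\|)$ against non-isospectral perturbations supplied by the Arnold form. Two families of perturbations are crucial. First, for each permutation $\sig$ of equal-size Jordan blocks at $\tlam_j$, the conjugated matrix $P_\sig^{-1}X P_\sig$ is also in a neighborhood of $\wt X$ and $\psi$ is invariant under this swap; the symmetrized regularity condition, applied to $X$ and $P_\sig^{-1}XP_\sig$, forces the corresponding off-diagonal blocks $W_j^{(rs)}$ (when the sizes match) to vanish, and forces all equal-size diagonal Toeplitz blocks to share the same parameters $\theta_{j1},\theta_{j2},\dots$. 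Second, for unequal block sizes one uses genuinely derogatory perturbations from the Arnold form: these perturbations move the eigenvalue $\tlam_j$ through Puiseux-type expansions in the Arnold coordinates, and a careful asymptotic analysis shows that the first-order variation of $\psi$ is controlled purely by the common Toeplitz parameters along the larger block, which forces the remaining off-diagonal $W_j^{(rs)}$ to vanish and imposes the required overlap of the parameters $\theta_{js}$ across blocks of differing sizes. Combining these two families yields the diagonal block structure \eqref{eqn: Y and W, 2} with common Toeplitz entries.

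The main obstacle is the second family of perturbations: the Puiseux behavior of eigenvalues under derogatory splittings means that a direct linear-order Taylor analysis is not available, and one must identify which components of the Arnold parameters actually contribute at first order to $\psi$ and which contribute only at fractional order. The technical heart of the argument is thus an asymptotic expansion of $f(\lam_{js}(X))$ along derogatory perturbation paths, showing that the fractional-power terms are the ones that obstruct the regular subgradient inequality unless the off-diagonal Toeplitz blocks vanish and the diagonal blocks synchronize their entries. This step is the reason the argument succeeds for the \emph{regular} subdifferential but does not readily extend to describe the limiting subdifferential in the derogatory case, consistent with the remarks following Theorem~\ref{theorems A B}.
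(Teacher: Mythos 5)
The paper does not actually prove this theorem; it imports it verbatim from \cite{burke-overton:01b}, so the comparison below is against the argument used there. Your first half is correct and is essentially that argument: along the isospectral curves $e^{tV}\wt{X}e^{-tV}$ the function $\psi$ is constant, the regular subgradient inequality at $\pm t$ gives $\Realpart\ip{Y}{V\wt{X}-\wt{X}V}=0$ for all $V$, hence $\wt{X}Y^{*}=Y^{*}\wt{X}$, equivalently $WJ^{*}=J^{*}W$; at nearby points $X^{\nu}$ the same reasoning gives commutation of $Y^{\nu}$ with $(X^{\nu})^{*}$, and this relation (scaled by $\beta_{\nu}$ in the horizon case) survives the limits defining $\sd\psi(\wt X)$ and $\sd^{\infty}\psi(\wt X)$; Gantmacher's description of the commutant of a Jordan matrix then yields \eqref{W block diag}--\eqref{eqn: Wj}.

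The second half, however, has a genuine gap. Invariance of $\psi$ under conjugation by a stabilizer element $Q$ (i.e.\ $Q\wt{X}Q^{-1}=\wt{X}$, such as a swap of equal-size Jordan blocks) shows only that the induced conjugate of $Y$ is \emph{again} a regular subgradient; it makes $\rsd\psi(\wt{X})$ invariant as a set and places no constraint on an individual $Y$. (Even element-wise swap invariance, if you had it, would give $W_j^{(rs)}=W_j^{(sr)}$ and $W_j^{(rr)}=W_j^{(ss)}$, not the needed $W_j^{(rs)}=0$.) What a correct proof must use---and what your proposal never isolates---are perturbation directions $E$ along which the characteristic polynomial of $J+tE$ is \emph{exactly} unchanged for all $t$, so that $\psi$ is exactly constant there no matter how irregular $\psi$ is, and the subgradient inequality at $\pm t$ and $\pm\sfi t$ upgrades to the equality $\cip{W}{E}_{\Cnn}=0$. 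Two such families finish the job: (i) $E$ supported strictly above (or strictly below) the block diagonal inside a super-block $J_j$, for which $J+tE$ is block triangular with unchanged diagonal blocks---this kills every off-diagonal block $W_j^{(rs)}$; (ii) coupling directions such as, for two equal-size blocks with nilpotent part $N$, $D=\bigl[\begin{smallmatrix} a\,q(N) & b\,q(N)\\ c\,q(N) & -a\,q(N)\end{smallmatrix}\bigr]$ with $a^{2}+bc=0$ and $q$ a polynomial: computing the determinant over the commutative algebra $\C[N]$ shows the characteristic polynomial of $J+tD$ is unchanged, while $\cip{W}{D}_{\Cnn}=0$ reduces to $a\,(m-s+1)\,\overline{(\theta^{(1)}_{js}-\theta^{(2)}_{js})}=0$, forcing the Toeplitz parameters of the two diagonal blocks to coincide (unequal block sizes require analogous curves inside the isospectral variety). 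Note also that these constraints cannot pass to limits because nearby matrices have different Jordan structure, which is the actual reason \eqref{eqn: Y and W, 2} holds only for regular subgradients. Finally, your second family cannot be repaired as stated: Theorem~\ref{thm: Arnold Form} is available only for nonderogatory eigenvalues, and, more fundamentally, the present theorem concerns an arbitrary spectral function $\psi$, so there is no generating function $f$ and no differentiability to expand---an ``asymptotic expansion of $f(\lam_{js}(X))$'' has no meaning here, and the conclusion must (and can) be reached by exact spectral invariance alone.
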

\begin{remark}
By lower triangular Toeplitz,  
we mean that the value of the $k, \ell$ 
entry of $W^{(rs)}_j$
depends only on the difference $k - \ell$ (is constant along the diagonals), 
and is zero if $k < \ell$  or $m_{jr} - k> m_{js} - \ell$ 
(is zero above the main diagonal, drawn from either the top left or bottom right of the block).
\end{remark}

Observe that  
	$\Phi_n$  (see  \eqref{eqn: Phi defn}) 
is smooth since each coefficient is a polynomial in the entries of $X$. 
The next result provides an expression for one-sided $\C$-derivatives of $\Phi_n$ with respect to a given direction.

\begin{lemma} 
\label{lem: 7.1}
\cite[Lemma 7.1]{burke-overton:01b}
Let
	\( \wt X \in \Cnn \)
and 
	\( Z \in \Cnn \) 
be given, and assume $\wt X$ has Jordan form \eqref{eqn: P and J, 1}. 
Define
	$p \in \Mcal{n}$ 
by 
	\begin{equation*}
	\label{eqn: char poly}
		p := \Phi_n(\wt X) =  
		\mbox{$\prod_{j=1}^m$} \elem{n_j}{\tlam_j}, 
	\end{equation*}
where $\Phi_n$ and $\elem{\ell}{\lamo}$ are defined in \eqref{eqn: Phi defn}
and \eqref{eqn: elementary polynomial}, respectively.
Set
	\begin{equation}
	\label{eqn: V defn for q}
		V:= \wt P Z \wt P^{-1},
	\end{equation}
and let $V_{jj}$ be the $n_j \times n_j$ diagonal block of $V$ 
corresponding to block $J_j$ of the matrix $J$
defined in \eqref{eqn: P and J, 1}. 
Then the action of the $\C$-derivative $\Phi_n'(\tX)$ is  
	\begin{equation}
	\label{Phi' action}
	\Phi_n'(\wt X)Z 
	=
		- \mbox{$\sum_{j=1}^m$}
		 \left(  
		 	\mbox{$\prod_{\mbox{\tiny\(\begin{array}{c}k\!=\!1,  k\!\neq\! j \end{array}\)} }^{m}$} 
			\elem{n_k}{\tlam_k}  
		\right) 
		\left( 
		\mbox{$\sum_{\ell=1}^{m_j}$}
		\tr \left(
			N_{[j]}^{ \ell -1} V_{jj} \right) 
			\elem{n_j-\ell}{\tlam_j}
		\right)
	\end{equation}
for all $Z \in \Cnn$, where 
	\begin{equation}
	\label{N sub bracket} 
	N_{[j]} := \Diag(N_{j1}, \dots, N_{jq_j})
	\end{equation}
with $N_{jk}$ defined in \eqref{Nilpotent block jk}.
$N_{[j]} := \Diag(N_{j1}, \dots, N_{jq_j})$ with $N_{jk}$ defined in \eqref{Nilpotent block jk}.
\end{lemma}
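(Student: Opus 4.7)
The plan is to compute $\Phi_n'(\wt X)Z$ as the coefficient of $t$ in the first-order expansion of $\det(\lam I_n - \wt X - tZ)$, exploiting that $\Phi_n$ is $\C$-smooth (polynomial in the entries of its argument) so the directional $\C$-derivative suffices. By similarity invariance of the determinant,
\[
\det(\lam I_n - \wt X - tZ) = \det(\wt P^{-1}(\lam I_n - J - tV)\wt P) = \det(\lam I_n - J - tV),
\]
with $V = \wt P Z \wt P^{-1}$ as in \eqref{eqn: V defn for q}. Applying Jacobi's formula $\tfrac{d}{dt}\big|_{t=0}\det(M - tA) = -\tr(\mathrm{adj}(M)A)$ then yields $\Phi_n'(\wt X)Z = -\tr(\mathrm{adj}(\lam I_n - J)V)$.

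Next, I would use the block-diagonal structure $\lam I_n - J = \Diag(\lam I - J_1,\dots,\lam I - J_m)$ to block-decompose the adjugate. Since for block-diagonal $M = \Diag(M_1,\dots,M_m)$ one has $\mathrm{adj}(M) = \Diag\bigl(\prod_{k\neq j}\det(M_k)\cdot \mathrm{adj}(M_j)\bigr)_{j=1}^m$, only the diagonal blocks $V_{jj}$ of $V$ contribute to the trace, giving
\[
\tr(\mathrm{adj}(\lam I_n - J)V) = \mbox{$\sum_{j=1}^m$}\Bigl(\mbox{$\prod_{k\neq j}$}\elem{n_k}{\tilde\lam_k}(\lam)\Bigr)\,\tr(\mathrm{adj}(\lam I - J_j)V_{jj}),
\]
where I have used $\det(\lam I - J_k) = (\lam-\tilde\lam_k)^{n_k} = \elem{n_k}{\tilde\lam_k}(\lam)$.

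The heart of the argument is to expand $\mathrm{adj}(\lam I - J_j)$ as a polynomial in $\lam$. Writing $\lam I - J_j = \mu I - N_{[j]}$ with $\mu := \lam - \tilde\lam_j$ and $N_{[j]}$ as in \eqref{N sub bracket}, the identity $N_{[j]}^{\ell} = 0$ for $\ell \geq m_j$ (since each sub-block $N_{jk}$ has size $m_{jk} \leq m_j$) gives a finite Neumann expansion $(\mu I - N_{[j]})^{-1} = \sum_{\ell=0}^{m_j-1}\mu^{-(\ell+1)}N_{[j]}^{\ell}$ for $\mu \neq 0$. Multiplying by $\det(\mu I - N_{[j]}) = \mu^{n_j}$ produces
\[
\mathrm{adj}(\lam I - J_j) = \mbox{$\sum_{\ell=1}^{m_j}$}\mu^{n_j-\ell}N_{[j]}^{\ell-1} = \mbox{$\sum_{\ell=1}^{m_j}$}\elem{n_j-\ell}{\tilde\lam_j}(\lam)\,N_{[j]}^{\ell-1}.
\]
Both sides are polynomials in $\lam$ of degree $\leq n_j - 1$, so the identity, established generically, extends to all $\lam$. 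Substituting into the previous display and using linearity of the trace gives precisely \eqref{Phi' action}.

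The only step requiring care is the passage from the rational Neumann inversion to the polynomial identity for $\mathrm{adj}(\lam I - J_j)$; this can equally be verified by a direct algebraic check, since $(\mu I - N_{[j]})\sum_{\ell=1}^{m_j}\mu^{n_j-\ell}N_{[j]}^{\ell-1} = \mu^{n_j}I - \mu^{n_j-m_j}N_{[j]}^{m_j} = \mu^{n_j}I$, using $n_j \geq m_j$ and nilpotency. Otherwise the proof is a transparent chain of Jacobi's formula, block-diagonal adjugate decomposition, and the explicit polynomial form of the resolvent at a nilpotent perturbation.
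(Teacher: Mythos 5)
Your proposal is correct, and there is nothing in the paper to compare it against: the paper imports this lemma verbatim from \cite[Lemma 7.1]{burke-overton:01b} and gives no proof of its own, so a self-contained derivation is genuinely useful. Your chain of steps checks out. Similarity invariance reduces the computation to $\det(\lam I - J - tV)$ with $J$ as in \eqref{eqn: P and J, 1} (no $\wt B$ block, since in this section $\tXi$ is the complete set of distinct eigenvalues) and $V$ as in \eqref{eqn: V defn for q}; Jacobi's formula gives $-\tr(\mathrm{adj}(\lam I - J)V)$; the block-diagonal adjugate identity isolates the diagonal blocks $V_{jj}$ and produces the factors $\prod_{k\neq j}\elem{n_k}{\tlam_k}$; and the nilpotent Neumann expansion
\[
\mathrm{adj}\bigl(\mu I - N_{[j]}\bigr) \;=\; \mbox{$\sum_{\ell=1}^{m_j}$}\,\mu^{\,n_j-\ell}\,N_{[j]}^{\ell-1},
\qquad \mu := \lam - \tlam_j,
\]
with $N_{[j]}$ as in \eqref{N sub bracket} and $N_{[j]}^{m_j}=0$, yields exactly the inner sum in \eqref{Phi' action}. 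Your handling of the one delicate point is also right: both the block-adjugate decomposition and the Neumann formula are first established for invertible arguments (equivalently, $\lam$ away from the eigenvalues), and since every quantity involved is polynomial in $\lam$ entrywise, the identities extend to all $\lam$; alternatively your telescoping verification $(\mu I - N_{[j]})\sum_{\ell=1}^{m_j}\mu^{n_j-\ell}N_{[j]}^{\ell-1} = \mu^{n_j} I$ settles it directly. This is essentially the standard resolvent-expansion argument behind the cited result (writing $\mathrm{adj}(M)=\det(M)M^{-1}$ and expanding the resolvent of each Jordan segment in powers of its nilpotent part), so while the paper offers no proof to compare with, your argument is the natural one and is complete.
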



\begin{theorem}
\label{thm: section 6}
Suppose $\vp$ is as in \eqref{eqn: vp defn - original}, 
where 
	\begin{equation}
	\label{eq:non-zero continuous grad}
	\nabla f \mbox{ is continuous at $\tlam_j$ with }\nabla f(\tlam_j)\ne 0\ \forall\ j\in \Icalvp(\wt X).
	\end{equation}
Define  
	\begin{equation}
	\label{eqn: sigj defn - max}
		\sig_j :=\begin{cases}   \theta_{j1} / \nabla f ( \tlam_j) ,  & j\in\Icalvp(\wt X) \\
		0, & j\notin\Icalvp(\wt X),
		\end{cases}	
	\end{equation}
where $\theta_{j1}$ is given in  \eqref{eqn: Y and W, 2}, and set
	\(	\sig := [\sig_1, \dots, \sig_1, \dots, \sig_m, \dots, \sig_m]^T \in \C^n, \)
where each $\sig_j$ is repeated $n_j$ times. 
Let $\wt X$ have Jordan form \eqref{eqn: P and J, 1},  
set
	\begin{equation}
	\label{simplex}
	  \simplex^{n-1}  :=  
	 \{ \gam\in\R^n \ \vert \ 0 \leq \gam_i, \ i=1, \dots, n, \ \mbox{$\sum_{i=1}^n \gam_i = 1$} \} ,   
	 \end{equation}
and assume that $Y$ is a regular subgradient for $\vp$ at $\wt X$.
Then $Y$ satisfies 
\eqref{eqn: Wj} and \eqref{eqn: Y and W, 2}, 
and we have the following:
%
\begin{enumerate}[label=(\alph*)]
	\item \cite[Theorem 6.1]{burke-overton:01b} 
		For all $j\in\{1, \dots, m\}$,
		\begin{equation}
		\label{eqn: theorem 6.1 vanlsf}
			 j \notin \Icalvp(\wt X) \quad \implies \quad  W_j=0, 
		\end{equation}
	where $W_j$ is given in \eqref{eqn: Y and W, 2}.
	\item \cite[Theorem 5.2, Equations 6.5, 6.6]{burke-overton:01b}
		We have
		\(	\sig \in \simplex^{n-1} \)
		with 
		\(	\sig_j = 0 \ \mbox{if } j \notin \Icalvp(\wt X). \)
		In particular, this implies that
		\begin{gather}
		\label{eqn: sig 1}
			\mbox{\( 	\sig_j \geq 0\quad\mbox{and} \quad \sum_{j\in\Icalvp(\wt X)} n_j \sig_j =1 . 	\)}
		\end{gather}
	\item \cite[Theorem 5.3]{burke-overton:01b}
	If, in addition, the second $\R$-derivative of $f$ is continuous at each 
		   $\tlam_j$ for  $j\in \Icalvp(\wt X)$, then 
		\begin{equation}
		\label{eqn: theta f' inner product - max}
			\langle { \theta_{j2} } ,{ (\nabla f ( \tlam_j))^2} \rangle_\C \geq -\sig_j \eta_j 
			\quad\mbox{ whenever $m_j  \geq 2$}, 
		\end{equation}
		where $\sig_j$ is given in \eqref{eqn: sigj defn - max}, $\theta_{j2}$ is given in \eqref{eqn: Y and W, 2}, $m_j$ is given in \eqref{mj defn},
		and 
		\begin{equation}
		\label{etaj defn}
		\eta_j := f''(\tlam_j;  \sfi  \nabla f ( \tlam_j)  , \sfi  \nabla f ( \tlam_j) ).
		 \end{equation}   
\end{enumerate}
\end{theorem}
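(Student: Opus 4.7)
The overall plan is to combine the block-Toeplitz structure of $W := \wt P^{-*} Y \wt P^*$ supplied by Theorems~\ref{theorems A B} with the polynomial-level subdifferential and subderivative formulas of Theorems~\ref{Thm: Burke-Eaton 12} and~\ref{subderivative general}, transferred via the smooth characteristic polynomial map $\Phi_n$ by means of the chain-rule inequality $\vp(\wt X + tZ) \ge \vp(\wt X) + t\,\mathrm{Re}\langle Y, Z\rangle + o(t)$. Setting $p := \Phi_n(\wt X)$ and using $\vp = \sff\circ\Phi_n$ locally, the plan is to extract the required scalar inequalities on the Toeplitz parameters $\theta_{j1}$ and $\theta_{j2}$ by testing this inequality against carefully chosen admissible directions $Z$, expressing $\Phi_n'(\wt X)Z$ via Lemma~\ref{lem: 7.1}, and identifying its image coefficients through the isomorphism $F'_p(0)\circ\Tau_p^{-1}$ of \S\ref{factorization space section}.

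For part (a), given $j\notin\Icalvp(\wt X)$ so that $f(\tlam_j)<\vp(\wt X)$, I would select $Z\in\Cnn$ supported only in the $j$-th Jordan block, i.e.\ such that $V=\wt P Z\wt P^{-1}$ has $V_{kk}=0$ for $k\ne j$. By continuity of $f$, eigenvalues near $\tlam_j$ remain strictly inactive under a small perturbation, so $\vp(\wt X + tZ) = \vp(\wt X)$ for small $t$. Applying the regular subgradient inequality to both $\pm Z$ then forces $\mathrm{Re}\langle Y, Z\rangle = 0$, which, upon unwinding the similarity by $\wt P$, gives $\mathrm{Re}\,\tr(W_j^* V_{jj}) = 0$ for every admissible $V_{jj}$; varying $V_{jj}$ over the full matrix space yields $W_j = 0$. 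For part (b), I would use Lemma~\ref{lem: 7.1} to write the coefficient of $\elem{n_j-1}{\tlam_j}$ in $\Phi_n'(\wt X)Z$ as $-\tr(V_{jj})$ (times the cofactor $\prod_{k\ne j}\elem{n_k}{\tlam_k}(\tlam_j)$, a nonzero constant), then compare $\mathrm{Re}\langle Y, Z\rangle$ with the subderivative formula of Theorem~\ref{subderivative general}, which at first order equals $\mathrm{Re}\langle \nabla f(\tlam_j), -\omega_{j1}\rangle$ scaled by weights $\gam_j/n_j$ from the simplex in Theorem~\ref{Thm: Burke-Eaton 12}. Matching coefficients identifies $\sig_j = \theta_{j1}/\nabla f(\tlam_j) = \gam_j/n_j$, and the simplex constraint $\sum_j\gam_j=1$ converts into $\sum_{j\in\Icalvp(\wt X)} n_j\sig_j = 1$, with nonnegativity inherited from $\gam_j\ge 0$. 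Hypothesis \eqref{eq:non-zero continuous grad} ensures this identification is well-defined.

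For part (c), I would choose $Z$ so that the induced polynomial perturbation has a prescribed second Taylor coefficient $\omega_{j2}$ at $\tlam_j$ while vanishing at higher orders; by Lemma~\ref{lem: 7.1}, this amounts to controlling $\tr(N_{[j]}V_{jj})$ through the block structure of $V_{jj}$. The regular subgradient inequality combined with the subderivative formula in Theorem~\ref{subderivative general}, with $\ka_j = f''(\tlam_j;\sqrt{-\omega_{j2}},\sqrt{-\omega_{j2}})/n_j$, then forces $\mathrm{Re}\langle\theta_{j2},(\nabla f(\tlam_j))^2\rangle_\C \ge -\sig_j\eta_j$ whenever $m_j\ge 2$, where $\eta_j$ is the quantity in \eqref{etaj defn} arising from the imaginary-direction second derivative, and the $\mathcal{C}^2$ continuity of $\nabla^2 f$ at $\tlam_j$ is used to justify the Taylor expansion. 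The main obstacle I expect is the derogatory coupling in step (c): when $q_j > 1$, the block $V_{jj}$ has its own internal structure and $\Phi_n'(\wt X)$ only senses certain traces of $V_{jj}$, so not every polynomial perturbation is attainable from an admissible matrix direction. Overcoming this requires showing that the smaller admissible cone of directions still supplies enough tests to recover the full list of inequalities predicted by the polynomial theory, which in particular means carefully choosing $V_{jj}$ of block Toeplitz type matching the Jordan partition so that $\tr(N_{[j]}^{\ell-1}V_{jj})$ realizes each targeted Taylor coefficient independently.
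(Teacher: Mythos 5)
Your attempt cannot be matched against a proof in the paper, because the paper gives none: this theorem sits in the subsection ``Review of results from \cite{burke-overton:01b}'' and each part carries a citation to the corresponding result of Burke and Overton (their Theorems 4.1/4.2, 6.1, 5.2 and 5.3), restated with $\Realpart(\cdot)$ replaced by a general convex generator $f$. The proofs being invoked there are direct matrix-perturbation arguments: similarity invariance forces the block lower-triangular Toeplitz structure, block-supported perturbations give (a) and (b), and fractional-power ``corner'' perturbations of a Jordan block, which split an eigenvalue along $\pm\sfi\nabla f(\tlam_j)$, give (c). So your proposal must stand on its own, and it does not.

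The fatal gap is in parts (b) and (c), exactly where you flagged trouble. Your tool is to bound $\ip{Y}{Z}$ above by the polynomial subderivative $\dee\sff(p)(\Phi_n'(\wt X)Z)$ from Theorem~\ref{subderivative general} and then ``match coefficients'' with the weights in Theorem~\ref{Thm: Burke-Eaton 12}. But the chain rule runs the other way: $\dee\vp(\wt X)(Z)=\dee(\sff\circ\Phi_n)(\wt X)(Z)\ge\dee\sff(p)(\Phi_n'(\wt X)Z)$, which is precisely how the paper uses it in the \emph{sufficiency} proof of Theorem~\ref{thm: new 7.2 - derogatory}; the polynomial formula is a lower bound for $\dee\vp$, not an upper bound. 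At derogatory matrices the inequality is strict and the bound you need is simply false: take $\wt X=0\in\C^{2\times 2}$, $f=\Realpart$, $Z=\Diag(1,-1)$. Then $\Phi_2'(\wt X)Z=-\tr(Z)\,\lam=0$, so your proposed upper bound is $\dee\sff(\lam^2)(0)=0$, while $\dee\vp(\wt X)(Z)=\max\{1,-1\}=1$; the subderivative of $\vp$ is not even a function of $\Phi_n'(\wt X)Z$. Equivalently, your ``matching'' step presupposes the reverse chain-rule inclusion $\rsd\vp(\wt X)\subset(\Phi_n'(\wt X))^\adj\rsd\sff(p)$, which is essentially the conclusion of the theorem and which Theorem~\ref{thm: chain rule} cannot supply when $\Phi_n'(\wt X)$ is not surjective --- i.e.\ exactly in the derogatory case. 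Your closing sentence names the attainability obstacle but only restates it; what actually closes it in the cited proofs is abandoning the polynomial subderivative and expanding $\vp$ along explicit matrix curves whose spectra are computable by hand: shifts $J_j\mapsto J_j+t\delta I_{n_j}$ for (b), and a perturbation $t\eps$ in the $(2,1)$ entry of a Jordan block of size $m_j\ge 2$ for (c), whose eigenvalues split as $\tlam_j\pm\sqrt{t\eps}$; choosing $\eps=-c^2(\nabla f(\tlam_j))^2$ makes the split tangent to $\pm\sfi\nabla f(\tlam_j)$, the first-order change of $f$ vanishes, and the second-order Taylor expansion produces $\eta_j$ and the inequality \eqref{eqn: theta f' inner product - max}. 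A smaller but real issue: your part (a) invokes continuity of $f$ at \emph{inactive} eigenvalues, which the hypotheses do not provide ($f$ is only proper, convex, lsc, and \eqref{eq:non-zero continuous grad} concerns active eigenvalues only); if an inactive $\tlam_j$ lies on the boundary of $\dom f$, the perturbed eigenvalues can exit $\dom f$, $\vp$ jumps to $+\infty$, and your two-sided test against $\pm Z$ collapses, so that step needs an additional assumption such as $\tlam_j\in\interior\dom f$.
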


\subsection{The regular subdifferential and its recession cone}
\label{ss: regular subdifferential}
The main result of Section~\ref{sec: general case} now follows.
\begin{theorem}[Regular subdifferential formula]
\label{thm: new 7.2 - derogatory}
Let $\wt X$ have Jordan form \eqref{eqn: P and J, 1} with the distinct eigenvalues $\tXi$ 
having geometric multiplicities $q_1, \dots, q_m$.
Suppose $f$ satisfies 
\eqref{f-twice-diffable-and-psd}  at each $\tlam_j$  
for $j\in \Icalvp(\wt X)$ and \eqref{eq:non-zero continuous grad} holds.
Then 	
	$Y \in \rsd \vp(\wt X)$
if and only if $Y$ satisfies 
	\eqref{W defn}-\eqref{eqn: Y and W, 2}
	and
	\eqref{eqn: theorem 6.1 vanlsf}-\eqref{eqn: theta f' inner product - max}.
\end{theorem}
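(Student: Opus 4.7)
The necessity direction is immediate, since Theorems~\ref{theorems A B} and~\ref{thm: section 6} together establish that every regular subgradient $Y$ of $\vp$ at $\wt X$ satisfies the block-diagonal Toeplitz structure \eqref{W defn}--\eqref{eqn: Y and W, 2} along with the conditions \eqref{eqn: theorem 6.1 vanlsf}--\eqref{eqn: theta f' inner product - max}. The substantive task is sufficiency: any $Y$ meeting all these conditions should be shown to lie in $\rsd\vp(\wt X)$.

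The plan is to apply the nonsmooth chain rule of Theorem~\ref{thm: chain rule} to the representation $\vp = \sff \circ \Phi_n$, which yields the inclusion
\[
\rsd \vp(\wt X) \;\supset\; \Phi_n'(\wt X)^\adj \rsd \sff(\tilp), \qquad \tilp := \Phi_n(\wt X) = \mbox{$\prod_{j=1}^m$}\,\elem{n_j}{\tlam_j},
\]
where the adjoint is taken with respect to the Taylor-coefficient inner product $\cip{\cdot}{\cdot}_{(\Pcal{\tiln},\tilp)}$ of Section~\ref{factorization space section}. I would then produce, for each admissible $Y$, an explicit polynomial $z\in\rsd\sff(\tilp)$ with $Y=\Phi_n'(\wt X)^\adj z$. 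By Theorem~\ref{Thm: Burke-Eaton 12}, such $z$ are parameterized as $z = F'_\tilp(0)\circ\Tau_\tilp^{-1}(v)$ with $v\in D_\tilp \subset \C^{\tiln+1}$, so the problem reduces to a choice of $v$.

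To construct $v$, I would read the Toeplitz parameters $\theta_{j\ell}$ directly off $W := \wt P^{-*}Y\wt P^*$, set $v_0=0$, and, for each $j\in\Icalvp(\wt X)$, populate the $j$-th block by $\mu_{j1}:=-\theta_{j1}$, with $\mu_{j2}$ defined from $\theta_{j2}$ via the normalization dictated by Lemma~\ref{lem: 7.1}, and $\mu_{js}$ arbitrary for $s\geq 3$; the $j$-th block is zero for $j\notin\Icalvp(\wt X)$. I then need to verify $v\in D_\tilp$: the simplex identity \eqref{eqn: sig 1} realizes the collection $(\mu_{j1})_{j\in\Icalvp(\wt X)}$ as a convex combination of extreme points drawn from $-\sd f(\tlam_j)/n_j$, placing $v$ in the convex hull defining $D_\tilp$; the inequality \eqref{eqn: theta f' inner product - max} translates (after normalization) into the defining inequality of $\Dcal(n_j,\tlam_j)$; the free choices for $s\geq 3$ occupy the unconstrained $\C^{n_j-2}$ factor of $\Gamma(n_j,\tlam_j)$; and the inactive-block condition \eqref{eqn: theorem 6.1 vanlsf} matches $\Gamma(n_j,\tlam_j)=\{0\}$ for $j\notin\Icalvp(\wt X)$.

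The hard part will be the identification $\Phi_n'(\wt X)^\adj z = Y$ in the derogatory case. Applying Lemma~\ref{lem: 7.1} and unfolding $\cip{z}{\Phi_n'(\wt X)Z}_{(\Pcal{\tiln},\tilp)}$ would reduce the required equality $\cip{Y}{Z}_{\Cnn}=\cip{\Phi_n'(\wt X)^\adj z}{Z}_{\Cnn}$, for all $Z\in\Cnn$, to a collection of trace identities involving $\tr(N_{[j]}^{\ell-1}V_{jj})$ weighted by the $\theta_{j\ell}$ for $\ell=1,\dots,m_j$, where $V=\wt P Z\wt P^{-1}$. The decisive observation is that the block-diagonal lower-triangular-Toeplitz structure of $W$ from \eqref{eqn: Y and W, 2}---forced by regularity, and strictly stronger than the merely block-lower-triangular-Toeplitz form of \eqref{eqn: Wj} allowed for general subgradients---makes $\cip{W}{V}_{\Cnn}=\tr(W^*V)$ depend on $V$ through exactly the same diagonal-block traces, so both expressions agree for every $Z$. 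This matching argument extends the technique used for $f=\Realpart$ in \cite[Theorem 7.2]{burke-overton:01b} (the spectral abscissa) to the general convexly generated case, with \eqref{eqn: theta f' inner product - max} playing the role that the $\R$-linearity of $\Realpart$ plays there.
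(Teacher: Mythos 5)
Your proposal reaches the right conclusion, and its necessity half coincides with the paper's (both quote Theorems~\ref{theorems A B} and~\ref{thm: section 6}); but your sufficiency argument takes a genuinely different route. The paper never produces a polynomial subgradient: it verifies, for every direction $Z\in\Cnn$, the inequality $\ip{Y}{Z}_{\Cnn}\le\dee\vp(\wt X)(Z)$ characterizing regular subgradients, by combining the \emph{subderivative} half of Theorem~\ref{thm: chain rule}, namely $\dee\vp(\wt X)(Z)\ge\dee\sff(\tilp)(\Phi_n'(\wt X)Z)$, with the explicit subderivative formula of Theorem~\ref{subderivative general}; the hypotheses on $Y$ enter through an inequality chain in the quantities $\om_{j\ell}:=-\tr(N_{[j]}^{\ell-1}V_{jj})$. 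You instead use the \emph{subgradient-inclusion} half of Theorem~\ref{thm: chain rule} together with the subdifferential formula of Theorem~\ref{Thm: Burke-Eaton 12}, and construct an explicit $z\in\rsd\sff(\tilp)$ with $\Phi_n'(\wt X)^\adj z=Y$. Your ``decisive observation'' is correct and is the crux of either version: since \eqref{eqn: Y and W, 2} forces $W$ to be block diagonal with the same Toeplitz parameters across all Jordan blocks of a given eigenvalue,
\[
\ip{Y}{Z}_{\Cnn}=\ip{W}{V}_{\Cnn}
=\mbox{$\sum_{j\in\Icalvp(\wt X)}\sum_{\ell=1}^{m_j}$}\,
\Realpart\bigl(\overline{\theta_{j\ell}}\,\tr(N_{[j]}^{\ell-1}V_{jj})\bigr),
\]
which depends on $Z$ through exactly the functionals that span the range of $\Phi_n'(\wt X)$ in Lemma~\ref{lem: 7.1}; with respect to $\cip{\cdot}{\cdot}_{(\Pcal{\tiln},\tilp)}$ the identification then pins down $\mu_{j\ell}=-\theta_{j\ell}$ for $\ell\le m_j$ (coordinates with $\ell>m_j$ are annihilated by the adjoint and may be set to zero), and your dictionary $\gam_j=n_j\sig_j$, with \eqref{eqn: sig 1} giving the simplex weights and \eqref{eqn: theta f' inner product - max} giving the $\Dcal(n_j,\tlam_j)$ inequality, is the right one. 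Your version is more constructive, exhibiting the polynomial subgradient behind each matrix subgradient; the paper's is direction-by-direction and needs only the subderivative formula, which makes it insensitive to the boundary issue described next.

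There is one concrete point where your membership verification $v\in D_{\tilp}$ is incomplete: the boundary of the simplex. Your construction forces $\mu_{j1}=-\theta_{j1}=-\sig_j\nabla f(\tlam_j)$ and $\mu_{j2}=-\theta_{j2}$. If $\sig_j=0$ for some active $j$ with $m_j\ge2$, then \eqref{eqn: theta f' inner product - max} still permits $\theta_{j2}\ne0$ (any $\theta_{j2}$ with $\langle{\theta_{j2}},{(\nabla f(\tlam_j))^2}\rangle_\C\ge0$), and such $Y$ do belong to $\rsd\vp(\wt X)$; but the corresponding $v$ then carries a nonzero $j$th block on a zero weight, so it is \emph{not} a convex combination of points of the single-block sets $\Gamma(n_j,\tlam_j)$, contrary to your description. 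It lies in $D_{\tilp}$ only because $D_{\tilp}$ is closed and the offending block is a horizon direction: $\Qcal(\tlam_j)$ is precisely the half-plane $\set{\mu}{\langle{\mu},{(\nabla f(\tlam_j))^2}\rangle_\C\le 0}$, so $\mu_{j2}=-\theta_{j2}\in\Qcal(\tlam_j)$ and the needed direction sits in $\Gamma(n_j,\tlam_j)^\infty$ as given in Theorem~\ref{Thm: Burke-Eaton 12}. This is repairable from the cited theorem (closedness of the regular subdifferential plus $C+C^\infty\subset C$ for closed convex $C$), but it must be argued; as written, your convex-combination step fails exactly at these subgradients, which the paper's inequality argument handles with no special care.
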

\begin{proof}  
We follow the pattern of proof established in  \cite[Theorem 7.2]{burke-overton:01b} for the spectral abscissa.
Let 	
	\( Y \in \rsd \vp(\wt X) \). 
By Theorems~\ref{theorems A B} and \ref{thm: section 6},  
$Y$ satisfies 
\eqref{W defn}-\eqref{eqn: Y and W, 2} and
%
\eqref{eqn: theorem 6.1 vanlsf}-\eqref{eqn: theta f' inner product - max}, respectively.

Next suppose that $Y\in \Cnn$ satisfies conditions
	\eqref{W defn}-\eqref{eqn: Y and W, 2}
and
	\eqref{eqn: theorem 6.1 vanlsf}-\eqref{eqn: theta f' inner product - max}.
We prove that $Y$ is a regular subgradient, that is,
	\begin{equation}
	\label{eqn: subdifferential inequality}
		\ip{Y}{Z}_{\Cnn} \leq \dee \vp(\wt X)(Z), \quad\mbox{for all} \quad Z\in\Cnn.
	\end{equation}
By applying the chain rule (see Theorem~\ref{thm: chain rule}), we obtain 
	\begin{equation}
	\label{temp}
		\dee \vp(\wt X)(Z) 
		= \dee (\sff \circ \Phi_n)(\wt X)(Z) 
		\geq \dee\sff(\Phi_n(\wt X)) ( \Phi_n'(\wt X)Z),	
		\end{equation}
where the action $\Phi_n'(\wt X)Z$ is given in \eqref{Phi' action}
and $\sff$ is defined in \eqref{defn:prmf}.
Set $p:=\Phi_n(\wt X)$ in \eqref{temp}  
to obtain
	\begin{equation}
	\label{eqn: old 7.14}
		\dee \vp(\wt X)(Z) \geq \dee \sff( p) (\Phi_n'(\wt X)Z).
	\end{equation}
By 
	\eqref{W defn}-\eqref{eqn: Y and W, 2}
and
\eqref{eqn: theorem 6.1 vanlsf},
	\[
	\ip{Y}{Z}_{\Cnn} = \ip{W}{V}_{\Cnn} = \mbox{$\sum_{j\in\Icalvp(\wt X)} \sum_{\ell=1}^{m_j}$}  
	\Realpart
		 (
			\overline{ \theta_{ j\ell }} \tr  ( N_{[j]}^{\ell-1} V_{jj}  )
		 )  , 
	\]
where $V$ is defined in \eqref{eqn: V defn for q}.
Set
	\( \om_{j\ell}:= -\tr (N_{[j]}^{\ell-1} V_{jj}  ) \)
for 
	 $\ell=1, \dots, m_j$, \( j\in\Icalvp(\wt X) \).
Then 
	\[
	\ip{W}{V}_{\Cnn} = \mbox{$\sum_{j\in\Icalvp(\wt X)} \sum_{\ell=1}^{m_j}$}  
	\Realpart
		 (
			\overline{ \theta_{ j\ell }}  (-\om_{j\ell}) 
		 )  .
	\]
If either \eqref{tc-second} or \eqref{tc-third} in Theorem~\ref{subderivative general} fails for some $j\in\Icalvp(\wt X)$, then 
the subderivative
	\( \dee \sff(p)(\Phi_n'(\wt X)Z)\)
is infinite, in which case \eqref{eqn: subdifferential inequality} is trivially satisfied.
If \eqref{tc-second} and \eqref{tc-third} in Theorem~\ref{subderivative general} hold for all 
	\( j\in \Icalvp(\wt X) , \) 
then
 	\begin{alignat}{2}
	\nonumber
		\ip{Y}{Z}_{\Cnn}
		& \!=\! \mbox{$\sum_{j\in\Icalvp(\wt X)}$}   ( \,
		\Realpart (\overline{\theta_{j1}}  (-\om_{j1})  )  
		\!+\! \Realpart (\overline{\theta_{j2}}(- \om_{j2}) )   \,),  
		&& \\ 
	\nonumber
		&\!=\! 
		\mbox{ \(
		\sum_{j\in\Icalvp(\wt X)}  
		\Realpart (\overline{\sig_j \nabla f ( \tlam_j)} (-\om_{j1}) )  
		\!-\! \Realpart (\overline{\theta_{j2}}  \,   \om_{j2}   )    \)}
		&& \ \mbox{by \eqref{eqn: sigj defn - max},} \\ 
	\nonumber
		&\!=\! 
		\mbox{ \(
		\sum_{j\in\Icalvp(\wt X)}  
		\sig_j\Realpart (\overline{ \nabla f ( \tlam_j)} (-\om_{j1}) )  
		\!-\!\Realpart (\overline{\theta_{j2}} \,   \om_{j2}   )    		\)}
		&& \ \mbox{by \eqref{eqn: sig 1},  } \\ 
	\label{eqn: pause for 2nd term}
		&\!=\! 
		\mbox{ \(
		\sum_{j\in\Icalvp(\wt X)}   
		\sig_j    f' ( \tlam_j; -\om_{j1})
		\!-\!\Realpart\left(\overline{\theta_{j2}} \,    \om_{j2} \right)  \)}  
		&& \ \mbox{by \eqref{subderivative is directional}}    ,
	\end{alignat}
where the second term in each line does not appear if $m_j=1$ (see \eqref{mj defn}).  
Recall that for all $j\in\Icalvp(\wt X)$, $\om_{j2}$ satisfies \eqref{tc-second}, which, 
since $f$ satisfies \eqref{f-twice-diffable-and-psd}  
at $\tlam_j$,  
is equivalent to 
  	\begin{equation}
	\label{eqn: omj2 tj}
		\om_{j2} = t_j (\nabla f ( \tlam_j))^2\quad\mbox{for some $t_j \geq 0$}
 	\end{equation} 
by \cite[Lemma 4]{burke-lewis-overton:05a}.
Therefore,
	\begin{align}
	\nonumber
		\langle{\theta_{j2}} , {f'( \tlam_j)^2} \rangle_{\C} & \geq -\sig_j \eta_j 
		&&\!\!\mbox{by \eqref{eqn: theta f' inner product - max}, which implies}   \\
	\nonumber
		\Realpart (\overline{\theta_{j2}} \, \om_{j2}  )  
		& \geq - t_j\sig_j \eta_j 
		&&\!\!\mbox{by \eqref{eqn: omj2 tj},  which implies } \\
	\nonumber
		-\Realpart (\overline{\theta_{j2}} \, \om_{j2}  ) 
		& \leq  t_j \sig_j f''( \tlam_j; \sfi \, f'( \tlam_j), \sfi \, f'( \tlam_j)), 
		&&\!\!\mbox{by \eqref{etaj defn}, which implies}\\
	\label{eqn: plug back in}
		-\Realpart (\overline{\theta_{j2}} \, \om_{j2}  )  
		& \leq  \sig_j f''( \tlam_j;    \sqrt{-\om_{j2}},   \sqrt{-\om_{j2}}\,).
	\end{align}
Plugging \eqref{eqn: plug back in} into  \eqref{eqn: pause for 2nd term} yields 
	\begin{alignat*}{2}
	\nonumber
		\ip{Y}{Z}_{\Cnn}
		&\!\!= 
		\mbox{\(
		\sum_{j\in\Icalvp(\wt X)}  
		(  
		\sig_j   f'( \tlam_j; -\om_{j1})
		-\Realpart (\overline{\theta_{j2} }    \om_{j2}   )   
		) 
		\)}
		\\
		&\!
		\mbox{\(
		\leq \sum_{j\in\Icalvp(\wt X)} (  
			\sig_j  f'( \tlam_j; -\om_{j1})
		+\sig_j f''( \tlam_j;    \sqrt{-\om_{j2}},    \sqrt{-\om_{j2}} )   
		) 
		\)}  \\
		&\! \!= 
		\mbox{\(
		\sum_{j\in\Icalvp(\wt X)}  \sig_j n_j
		(   
		f'( \tlam_j; -\om_{j1})
		+  f''( \tlam_j; \sqrt{-\om_{j2}},  \sqrt{-\om_{j2}} )
		) /  n_j  
		\)}
		  \\
		&\! \leq 
		\mbox{\(
		\max_{j\in\Icalvp(\wt X)}   
		\!\left\{( 
		f'( \tlam_j; -\om_{j1})
		+  f''( \tlam_j;   \sqrt{-\om_{j2}},   \sqrt{-\om_{j2}} ) )/n_j 
		\right \}
		\)}
		&&\mbox{ by \!\eqref{eqn: sig 1}}\\
		&\! = \dee \sff( p) (\Phi_n'(\tX)Z),
	\end{alignat*}
where  the last equality holds by Theorem~\ref{subderivative general}.
Combining this with \eqref{eqn: old 7.14} gives \eqref{eqn: subdifferential inequality}.
\end{proof}

\begin{remark}
Theorem~\ref{thm: new 7.2 - derogatory} establishes the formula for the regular subdifferential of $\vp$ 
at a matrix with any number of derogatory or nonderogatory active eigenvalues.
The theorem recovers the formula for the regular subdifferential in the case where all active eigenvalues are nonderogatory
(see Corollary~\ref{cor:nonderogatory explicit}).
In Section 7, it is shown that although
Theorem~\ref{thm: new 7.2 - derogatory} does not directly apply to the spectral radius, it can still be used
to derive its regular subdifferential.
\end{remark}

\begin{theorem}
[Recession cone of the regular subdifferential]
\label{thm: recession cone}  
Assume the hypotheses of Theorem \ref{thm: new 7.2 - derogatory}.  
Then $Y\in\rsd\vp(\wt X)^\infty$
if and only if $Y$ satisfies 
	\eqref{W defn}-\eqref{eqn: Y and W, 2},
	\eqref{eqn: theorem 6.1 vanlsf}, 
and, for
	$j \in \Icalvp(\wt X)$, 
the diagonal entries 
	$\theta_{j1}$ in \eqref{eqn: Y and W, 2} satisfy
	\begin{equation}
	\label{theta1s are zero}
	\theta_{j1}=0,
	\end{equation}
and, if in addition $m_j\geq 2$ (see \eqref{mj defn}), the subdiagonal entries 
	$\theta_{j2}$ in \eqref{eqn: Y and W, 2} satisfy
	\begin{equation}
	\label{eqn: Z subdiagonal}
	 \langle 	{\theta_{j2}}, { (\nabla f ( \tlam_j))^2} \rangle_\C \geq 0. 
	\end{equation}
\end{theorem}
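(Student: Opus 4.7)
The plan is to exploit the fact that $\rsd \vp(\wt X)$ is a closed convex subset of $\Cnn$, so its horizon cone coincides with the classical recession cone. Hence $Y \in \rsd \vp(\wt X)^\infty$ if and only if $Y_0 + tY \in \rsd \vp(\wt X)$ for every $t \geq 0$, once a fixed $Y_0 \in \rsd \vp(\wt X)$ is chosen. I would anchor the argument by exhibiting a concrete element: fix $j^* \in \Icalvp(\wt X)$ and let $Y_0$ correspond to $\sig_{j^*} = 1/n_{j^*}$, $\sig_j = 0$ for $j\ne j^*$, and $\theta_{js} = 0$ for all $s\ge 2$ and all $j$. Theorem~\ref{thm: new 7.2 - derogatory} (with $\eta_j \geq 0$ guaranteed under \eqref{f-twice-diffable-and-psd}) confirms $Y_0\in\rsd\vp(\wt X)$.

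Next, I would read off from Theorem~\ref{thm: new 7.2 - derogatory} the parametric description of $\rsd \vp(\wt X)$ in terms of the scalars $\theta_{js}$ and examine when $Y_0 + tY$ continues to satisfy each listed condition for every $t \geq 0$. The linear requirements — the Jordan-conjugated block structure \eqref{W defn}--\eqref{eqn: Y and W, 2} and the vanishing of inactive blocks \eqref{eqn: theorem 6.1 vanlsf} — cut out a linear subspace, so they transfer verbatim to $Y$. The constraint \eqref{eqn: sig 1} splits into an equality and an inequality: applied to $Y_0+tY$ for all $t\ge 0$, the equality $\sum_{j\in\Icalvp(\wt X)} n_j\sig_j = 1$ forces $\sum n_j\sig_j^Y = 0$, while the nonnegativity $\sig_j^{Y_0}+t\sig_j^Y\ge 0$ forces $\sig_j^Y\ge 0$; together they yield $\sig_j^Y = 0$ and hence $\theta_{j1}^Y = 0$, which is \eqref{theta1s are zero}. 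With $\sig_j^{Y_0+tY}=\sig_j^{Y_0}$ now independent of $t$, the second-order bound \eqref{eqn: theta f' inner product - max} reduces for $m_j\ge 2$ to
\[
\langle\theta_{j2}^{Y_0},(\nabla f(\tlam_j))^2\rangle_\C + t\langle\theta_{j2}^Y,(\nabla f(\tlam_j))^2\rangle_\C \ \ge\ -\sig_j^{Y_0}\eta_j \qquad\forall\, t\ge 0;
\]
dividing by $t$ and letting $t\to\infty$ yields \eqref{eqn: Z subdiagonal}.

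For the converse, I would verify that any $Y$ satisfying \eqref{W defn}--\eqref{eqn: Y and W, 2}, \eqref{eqn: theorem 6.1 vanlsf}, \eqref{theta1s are zero}, and \eqref{eqn: Z subdiagonal} makes $Y_0 + tY$ meet every hypothesis of Theorem~\ref{thm: new 7.2 - derogatory} for every $t \geq 0$: the linear conditions are preserved trivially; since $\sig_j^Y = 0$, the simplex identity \eqref{eqn: sig 1} is inherited from $Y_0$; and since $\langle\theta_{j2}^Y,(\nabla f)^2\rangle_\C\ge 0$, the bound \eqref{eqn: theta f' inner product - max} for $Y_0$ is preserved under translation by $tY$. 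Thus $Y_0 + tY \in \rsd \vp(\wt X)$ for every $t\ge 0$ and therefore $Y \in \rsd \vp(\wt X)^\infty$.

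The main obstacle is not technical but conceptual bookkeeping: the crucial point is that \eqref{eqn: sig 1} is an \emph{equality}, which upgrades the naive recession inequality $\sig_j^Y \geq 0$ to the sharper identity $\sig_j^Y = 0$ and hence $\theta_{j1}^Y = 0$. Had one treated the simplex constraint as pure nonnegativity, the conclusion \eqref{theta1s are zero} would be lost. The rest of the argument is a matter of disentangling how each Toeplitz parameter responds to the $t\to\infty$ limit in the subgradient characterization.
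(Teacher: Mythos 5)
Your proof is correct and follows essentially the same route as the paper's: both translate elements of $\rsd\vp(\wt X)$ along $Y$, invoke the characterization of Theorem~\ref{thm: new 7.2 - derogatory}, extract \eqref{theta1s are zero} from the fact that \eqref{eqn: sig 1} is an equality (not mere nonnegativity), and obtain \eqref{eqn: Z subdiagonal} by letting $t\to\infty$ in \eqref{eqn: theta f' inner product - max}. The only difference is cosmetic: you test recession along a single explicitly constructed anchor $Y_0$ (legitimate because $\rsd\vp(\wt X)$ is closed and convex), whereas the paper runs the identical computation with an arbitrary $Z\in\rsd\vp(\wt X)$; your variant has the small added virtue of making the nonemptiness of $\rsd\vp(\wt X)$ explicit.
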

\begin{proof}
First suppose 
	\( Y \in \rsd \vp(\wt X)^\infty . \)
Since $\rsd \vp(\wt X)$ is convex,  
	\( Z + tY \in  \rsd \vp(\wt X) \)
for all 
	\( Z  \in \rsd \vp(\wt X)$ and $t \geq 0   \) 
by the definition of recession cone.
By Theorem~\ref{theorems A B}, both 
	$\wt P^{-*} Z \wt P^*$ 
and 
	$\wt P^{-*} (Z+tY) \wt P^*$ 
satisfy 
	\eqref{W defn}-\eqref{eqn: Y and W, 2} 
for all 
	$Z\in\rsd\vp(\wt X)$ 
and 
	$t\geq0$.
In particular, both 
	$\wt{P}^{-*} Z \wt{P}^*$ 
and 
	$\wt{P}^{-*} (Z+tY) \wt{P}^*$ 
are block diagonal matrices, with each block a lower triangular Toeplitz matrix 
where the blocks corresponding to the same eigenvalue have the same diagonal entries.
Therefore,
	$\wt P^{-*} Y \wt P^*$ 
has the block structure specified in \eqref{W defn}-\eqref{eqn: Y and W, 2}.
By Theorem~\ref{thm: section 6}, $Y$ also satisfies \eqref{eqn: theorem 6.1 vanlsf}.

For 
	$Z \in \rsd \vp (\wt X)$, 
denote the diagonal entries of the $j$th block of 
	$\wt P^{-*} Z \wt P^*$ 
by 
	$z_{j1}$. 
By  Theorem~\ref{thm: section 6}, $Z+tY$ satisfies  
\eqref{eqn: sig 1} for all $t\geq 0$, that is,
	\[
		\mbox{$\sum_{j\in \Icalvp(\wt X)}$}  n_j    (z_{j1} + t \theta_{j1}) / \nabla f ( \tlam_j)  = 1
	\] 
and
	\(
	 (z_{j1} + t \theta_{j1}) / \nabla f ( \tlam_j) \geq 0 \)
	for all
	$j \in\Icalvp(\wt X)$ and
$t\geq 0$.  Therefore, 
	$\theta_{j1} = 0$ 
for all 
	$j \in\Icalvp(\wt X)$, proving \eqref{theta1s are zero}.
If, in addition, $m_j\geq 2$ (see \eqref{mj defn}), denote the subdiagonal entries of $\wt P^{-*} Z \wt P^*$  by 
	$z_{j2}$.
By Theorem~\ref{thm: section 6}, $Z+tY$ satisfies
	\eqref{eqn: theta f' inner product - max}, that is, 
	\[ 
		\langle { (z_{j2} + t \theta_{j2}) } ,{ (\nabla f ( \tlam_j))^2 } \rangle_\C \geq      -(z_{j1} + t \theta_{j1})\eta_j   / \nabla f ( \tlam_j) 
	\]
for all 
	$j \in \Icalvp(\wt X)$ 
and 
	$t \geq 0$.
Since $\theta_{j1}=0$, this becomes 
	\[
		\langle { z_{j2}  } ,{ ( \nabla f ( \tlam_j))^2 } \rangle_\C  + t \langle {    \theta_{j2} } ,{ (\nabla f ( \tlam_j))^2 } \rangle_\C 
		\geq -   z_{j1}   \eta_j/ \nabla f ( \tlam_j)    
	\]
for all $t \geq 0$ and $Z \in \rsd\vp(\wt X)$.
Since, by \eqref{eqn: theta f' inner product - max}, 	
	\[ \langle { z_{j2}  } ,{ (\nabla f ( \tlam_j))^2 } \rangle_\C  \geq -   z_{j1}   \eta_j/ \nabla f ( \tlam_j)   
	\]
	for all $j \in \Icalvp(\tX)$ with $m_j\geq 2$, 
	\eqref{eqn: Z subdiagonal} follows. 

Now suppose $Y \in \C^{n\times n}$ satisfies 
	\eqref{W defn}-\eqref{eqn: Y and W, 2},
	\eqref{eqn: theorem 6.1 vanlsf}, 
	\eqref{theta1s are zero}, and, for $j\in \Icalvp(\tX)$ with $m_j\geq 2$,
	\eqref{eqn: Z subdiagonal}.
Then for any $t \geq 0$ and $Z \in \rsd \vp(\wt X)$, the matrix $Z+tY$ satisfies the conditions of Theorem \ref{thm: new 7.2 - derogatory},
which implies that $Z+tY \in \rsd \vp(\wt X)$,  proving the reverse inclusion.
\end{proof} 

\subsection{Nonderogatory active eigenvalues are necessary for regularity}
\label{ss: nonderogatory necessary}
Theorem~\ref{thm:vp is subdifferentially regular} shows that nonderogatory active eigenvalues are 
sufficient for the subdifferential regularity of $\vp$.
We now show this condition is also necessary.

\begin{lemma}
\label{thm: new 8.2} 
In \eqref{eqn: distinct evals matrix}, let 
	$\tXi$ 
be the complete set of distinct eigenvalues 
of
	$\wt X \in \Cnn$,
and suppose $f$ satisfies \eqref{eq:non-zero continuous grad}.
 If 
 	$\tlam_j$ 
is derogatory for some 
	$j\in \Icalvp(\wt X)$, 
then 
	\( \sd\vp(\wt X) \supsetneq \rsd\vp(\wt X). \)
\end{lemma}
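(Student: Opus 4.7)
The plan is to produce an element $Y\in\sd\vp(\wt X)\setminus\rsd\vp(\wt X)$ by constructing a sequence $X^\nu\to\wt X$ at which the derogatory active eigenvalue $\tlam_{j_0}$ (with geometric multiplicity $q:=q_{j_0}\ge 2$) has been split into nonderogatory pieces, exhibiting an explicit $Y\in\rsd\vp(X^\nu)$ that is independent of $\nu$, and observing that $Y$ violates the uniform-diagonal requirement of \eqref{eqn: Y and W, 2} at $\wt X$. Writing $\wt X=\wt P^{-1}J\wt P$ as in \eqref{eqn: P and J, 1} and fixing distinct positive reals $t_2,\dots,t_q$, I would set $X^\nu:=\wt P^{-1}(J+D^\nu)\wt P$, where $D^\nu$ is block-diagonal, with scalar block $-(t_k/\nu)\nabla f(\tlam_{j_0})\,I_{m_{j_0 k}}$ placed in the slot of $J_{j_0}^{(k)}$ for $k=2,\dots,q$ and zero elsewhere, leaving the first block $J_{j_0}^{(1)}$ untouched. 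Since $J+D^\nu$ is already in Jordan form, the eigenvalues of $X^\nu$ near $\tlam_{j_0}$ are $\tlam_{j_0}$ itself (from the unperturbed first block, still Jordan of size $m_{j_0 1}$) together with the $q-1$ distinct shifted eigenvalues $\tlam_{j_0}-(t_k/\nu)\nabla f(\tlam_{j_0})$, each nonderogatory. By \eqref{eq:non-zero continuous grad}, the first-order expansion
\[
  f\bigl(\tlam_{j_0}-(t_k/\nu)\nabla f(\tlam_{j_0})\bigr)
  = f(\tlam_{j_0}) - (t_k/\nu)\,|\nabla f(\tlam_{j_0})|^2 + o(1/\nu)
\]
lies strictly below $f(\tlam_{j_0})$ for all large $\nu$, so $\tlam_{j_0}$ remains the only active eigenvalue of $X^\nu$ near $\tlam_{j_0}$, and it is nonderogatory at $X^\nu$ with algebraic multiplicity $m_{j_0 1}$; any other original active eigenvalues persist by continuity.

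Next, I would define the $\nu$-independent matrix $Y:=\wt P^* E\, \wt P^{-*}$, where $E$ has its only nonzero block equal to $(\nabla f(\tlam_{j_0})/m_{j_0 1})\,I_{m_{j_0 1}}$, placed in the slot of $J_{j_0}^{(1)}$, and verify via Theorem~\ref{thm: new 7.2 - derogatory} applied at $X^\nu$ that $Y\in\rsd\vp(X^\nu)$ for all large $\nu$. Indeed $Y$ is lower-triangular Toeplitz block-diagonal and vanishes on every inactive block (so \eqref{eqn: theorem 6.1 vanlsf} holds); the $\sigma$-parameter for the (nonderogatory) active $\tlam_{j_0}$ at $X^\nu$ is $1/m_{j_0 1}$ and all others vanish, giving $\sum_j n_j^{X^\nu}\sigma_j=1$ as required by \eqref{eqn: sig 1}; and the subdiagonal parameters of $Y$ vanish, so \eqref{eqn: theta f' inner product - max} reduces to $0\ge -(1/m_{j_0 1})\,\eta_{j_0}$, valid since $\eta_{j_0}\ge 0$ by \eqref{f-twice-diffable-and-psd}. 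Because $Y$ is independent of $\nu$, the definition \eqref{general-sd-formula} of the general subdifferential yields $Y\in\sd\vp(\wt X)$ directly.

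Finally, Theorem~\ref{theorems A B} forces every element of $\rsd\vp(\wt X)$ to satisfy \eqref{eqn: Y and W, 2} at $\wt X$, and in particular to exhibit a common diagonal entry $\theta_{j_0,1}$ across all $q$ blocks $W_{j_0}^{(kk)}$. For our $Y$, one has $\wt P^{-*} Y \wt P^* = E$, whose $(1,1)$-block has diagonal $\nabla f(\tlam_{j_0})/m_{j_0 1}\ne 0$ while its $(k,k)$-blocks vanish for $k\ge 2$, violating the uniformity; hence $Y\notin\rsd\vp(\wt X)$ and the strict inclusion follows. The main obstacle is the verification step at $X^\nu$: the active-set analysis must confirm that the chosen perturbation direction $-\nabla f(\tlam_{j_0})$ genuinely demotes the unwanted $q-1$ eigenvalues out of $\Icalvp(X^\nu)$, and the subdiagonal inequality \eqref{eqn: theta f' inner product - max} must still be checked for any derogatory $\tlam_j$ ($j\ne j_0$) that persist as active at $X^\nu$; forcing those blocks of $Y$ to vanish makes the inequality trivial and completes the argument.
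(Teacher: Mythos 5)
Your proposal is correct and takes essentially the same approach as the paper's proof: perturb the Jordan form diagonally so that the derogatory active eigenvalue splits and its remaining active structure is nonderogatory, use Theorem~\ref{thm: new 7.2 - derogatory} to exhibit an explicit regular subgradient at the perturbed matrices $X^\nu$ supported on a single Jordan sub-block, pass to the limit via \eqref{general-sd-formula}, and conclude the limit is not a regular subgradient at $\wt X$ because its sub-blocks violate the equal-diagonal requirement of \eqref{eqn: Y and W, 2}. The only real difference is the direction of the split---the paper moves the first block $J_j^{(1)}$ to a nearby point $\lam^\nu$ with $f(\lam^\nu)>f(\tlam_j)$ so that $\lam^\nu$ is the unique (nonderogatory) active eigenvalue of $X^\nu$, whereas you push blocks $2,\dots,q$ down along $-\nabla f(\tlam_{j_0})$, so the other originally active eigenvalues persist and must be handled by zero blocks of $Y$---and, like the paper's own proof (which applies Theorem~\ref{thm: new 7.2 - derogatory} at $X^\nu$), your verification invokes \eqref{f-twice-diffable-and-psd} beyond the lemma's stated hypothesis \eqref{eq:non-zero continuous grad}.
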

\begin{proof} 
We follow the method of proof in \cite[Theorem 8.2]{burke-overton:01b} for the spectral abscissa.
Let 
	$j\in \Icalvp(\wt X)$ 
be such that 
	$ \tlam_j$ 
is derogatory. 
Then the geometric multiplicity of 
	$ \tlam_j$ 
is at least 2. 
Since $f$ is differentiable at 
	$ \tlam_j$, $ \tlam_j \in \interior(\dom(f))$. 
This together with the fact that 
	$\nabla f ( \tlam_j) \neq 0$ 
implies 
	$ \tlam_j$ 
is not a local maximizer of $f$. 
Therefore, there exists 
	$\lam^\nu \to  \tlam_j$ with $
	f(\lam^\nu)>f( \tlam_j)$. 
Set 
	$\beta^\nu := \lam^\nu- \tlam_j$ 
and define
	\( X^\nu := \wt P^{-1} (J + \beta^\nu E) \wt P \)
where $\wt  P$ and $J$ are defined in \eqref{eqn: P and J, 1},
and the entries of 
$E$ are one in the $m_{j1}$ diagonal positions corresponding to the Jordan sub-block 
	$J_j^{(1)}$
with all other entries zero.
Clearly $X^\nu \to \wt X$.
The only active eigenvalue of 
	$X^\nu$ 
is 
	$\lam^\nu$, 
with multiplicity 
	$m_{j1}$,
and 
	\(J + \beta^\nu E\) 
is the Jordan form of 
	$X^\nu,$
up to re-ordering of the Jordan blocks. 
By Theorem \ref{thm: new 7.2 - derogatory}, 
	$\rsd\vp(X^\nu)$ 
includes the matrix
	\( ( \nabla f (\lam^\nu) /  m_{j1} ) \wt P^* E \wt P^{-*} \)
for all $\nu$, which implies 
	\( (\nabla f ( \tlam_j) /m_{j1}) \wt P^* E \wt P^{-*} \in \sd\vp(\wt X) . \) 
However, 
	\(  (\nabla f ( \tlam_j)/m_{j1}) \wt P^* E \wt P^{-*} \notin \rsd\vp(\wt X) \)  
since 
	\( (\nabla f ( \tlam_j) / m_{j1}) \wt P^* E \wt P^{-*} \)
does not satisfy 
the conditions in  Theorem \ref{thm: new 7.2 - derogatory}
because the diagonal entries of 
	$W_j$ 
are $0$ for blocks 
	$m_{j2}$ to $m_{jq_j}$ 
but are nonzero for block 
	$m_{j1}$.  
Therefore, 
	$\sd\vp(\wt X) \supsetneq \rsd\vp(\wt X)$. 
\end{proof}

The following  result follows as an immediate consequence of 
Theorem~\ref{thm:vp is subdifferentially regular} and the preceding lemma.
\begin{theorem}\label{main result}
Let $f:\C\to \eR$ be proper, convex, lsc, and 
let $\vp$ be the associated spectral max function as in \eqref{eqn: vp defn - original}. 
Suppose that
	$\wt{X}\in\Cnn$ 
$f$ satisfies \eqref{f-twice-diffable-and-psd} at all active eigenvalues of $\wt{X}$.
Then $\vp$ is subdifferentially regular at $\wt X$ if and only if the active eigenvalues of $\wt X$ are nonderogatory.
\end{theorem}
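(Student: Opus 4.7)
The plan is to establish the biconditional by splicing together the two results that precede this theorem: the sufficiency ``nonderogatory $\Rightarrow$ regular'' is exactly the content of Theorem~\ref{thm:vp is subdifferentially regular}, while the necessity ``regular $\Rightarrow$ nonderogatory'' is obtained as the contrapositive of Lemma~\ref{thm: new 8.2}. No new analytical tool is needed; the entire proof reduces to verifying that the hypotheses of the two cited results are in force under the hypotheses of the main theorem.

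For the ``if'' direction I would proceed as follows. Assuming all eigenvalues in $\Acalvp(\wt X)$ are nonderogatory, set $\tXi := \Acalvp(\wt X)$ as in \eqref{eqn: distinct evals matrix} and let $\tilp$ denote the resulting active factor of $\Phi_n(\wt X)$ given by \eqref{active factor defn}. The Arnold form decomposition in Theorem~\ref{thm: Arnold Form} then applies, so the framework of Section~\ref{sec: derivative of characteristic factor} is available at $\wt X$. By hypothesis $f$ satisfies \eqref{f-twice-diffable-and-psd} at each $\tlam_j \in \Acalvp(\wt X)$, and since \eqref{f-twice-diffable-and-psd} forces $f$ to be differentiable at $\tlam_j$ we have $\sd f(\tlam_j) = \{\nabla f(\tlam_j)\}$, so the nondegeneracy hypothesis \eqref{eq:not zero} of Theorem~\ref{thm:vp is subdifferentially regular} reduces to $\nabla f(\tlam_j)\ne 0$, which is part of the standing assumption. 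The second assertion of Theorem~\ref{thm:vp is subdifferentially regular} then yields $\sd\vp(\wt X)=\rsd\vp(\wt X)$ and $\sd^\infty\vp(\wt X)=\rsd\vp(\wt X)^\infty$, i.e., subdifferential regularity at $\wt X$.

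For the ``only if'' direction I argue by contraposition. Suppose some $\tlam_{j_0}\in\Acalvp(\wt X)$ is derogatory (so its geometric multiplicity is at least~$2$). Since $f$ satisfies \eqref{f-twice-diffable-and-psd} at each active eigenvalue, $\nabla f$ is continuous at each such $\tlam_j$ with $\nabla f(\tlam_j)\ne 0$, so the hypothesis \eqref{eq:non-zero continuous grad} of Lemma~\ref{thm: new 8.2} holds. Lemma~\ref{thm: new 8.2} then produces an element $(\nabla f(\tlam_{j_0})/m_{j_0 1})\wt P^\adjoint E\wt P^{-\adjoint}$ of $\sd\vp(\wt X)\setminus\rsd\vp(\wt X)$, giving the strict inclusion $\sd\vp(\wt X)\supsetneq\rsd\vp(\wt X)$. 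This contradicts subdifferential regularity, which requires equality of these two sets.

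The theorem is therefore immediate from the two cited results, and in fact this is how the paper explicitly advertises the conclusion. There is no real obstacle; the only point requiring attention is the implicit nondegeneracy hypothesis $\nabla f(\tlam_j)\ne 0$ at active eigenvalues, which is needed both to apply \eqref{eq:not zero} in the sufficiency argument and to apply \eqref{eq:non-zero continuous grad} in the necessity argument, and which is carried as a standing assumption throughout Sections~\ref{sec: nonderogatory case} and~\ref{sec: general case}.
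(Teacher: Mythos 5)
Your proposal is correct and is essentially identical to the paper's own argument: the paper states Theorem~\ref{main result} as an immediate consequence of Theorem~\ref{thm:vp is subdifferentially regular} (sufficiency) and Lemma~\ref{thm: new 8.2} (necessity, via contraposition), exactly as you do. Your attention to the nondegeneracy conditions \eqref{eq:not zero} and \eqref{eq:non-zero continuous grad} is in fact more careful than the paper's statement of the theorem, which omits them even though \eqref{f-twice-diffable-and-psd} alone does not force $\nabla f(\tlam_j)\ne 0$.
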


\section{Application to the spectral radius}
\label{sec: radius}
Let $r: \C \to \R$ be the complex modulus map
	\( r(\zeta) := \abs{\zeta}   \).
The polynomial radius  $\sfr: \Pcal{n} \to \eR$
and the spectral radius  $\rho: \Cnn \to \eR$ are 
given by 
	\(	\sfr(p) = \max\{ r(\lam) \,|\, p(\lam) = 0 \}
	\)
and
	\(	
		\rho(X) = \max\{ r(\lam) \,|\, \det(\lam I - X) = 0 \},
	\)	
respectively. 
The results of the previous sections do not directly apply to $\rho$ as
\eqref{f-twice-diffable-and-psd} is not satisfied, since
	 \( r''(\lam; t\lam,t\lam)    =   0 \)    
for all $\lam\neq 0$ and $t\in \R$, and $r'(0)$ does not exist (see \S\ref{Derivatives}).
As in \cite{burke-lewis-overton:05a}, we overcome these hurdles by introducing the function
	\( r_2(\zeta) := \abs{\zeta}^2/2 . \)
Since
	$\nabla^2 r_2 (\lam)$ is the identity map on $\C$,
$r_2$  satisfies 
\eqref{f-twice-diffable-and-psd} on all of $\C$. 
Set 
	\( \rho_2(\wt X) := \max  \{   \abs{\lam}^2/2 \ \vert \ \det(\lam I - \wt X) = 0   \} . \)
The  strategy is to first consider the case 
$\rho(\wt X)>0$ using $\rho_2$ and then to directly consider the case 
 $\rho(\wt X)=0$.
When $\rho(\wt X)>0$,
the relationship between the regular subdifferentials of $\rho$ and $\rho_2$ 
is a consequence of the following result. 
\begin{lemma}\label{lem: rho and rho2} 
Let 
	$\tX \in  \Cnn$ 
be such that 
	$\rho(\wt X)>0$.
Then
	\[\begin{aligned}
		T_{\epi{\rho}}(\wt X,\mu)  &=  \{  (V,  \eta/\mu  )  \vert
					(V,\eta) \in T_{\epi{\rho_2}} (\wt X, \mu^2/2 )	
					  \}, 
	\ \mbox{and}\\
		\widehat{N}_{\epi{\rho}}(\wt X,\mu) 
			  &=
			\{  (W, \mu\tau )  \vert
			(W,\tau) \in \widehat{N}_{\epi{\rho_2}} (\wt X, \mu^2/2 )	
			 \}.
	\end{aligned}\]
\end{lemma}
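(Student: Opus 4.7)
The key observation is that $\rho_{2}(X) = \rho(X)^{2}/2$ for every $X\in\Cnn$, since the maximum of $|\lam|^{2}/2$ over the spectrum is achieved at the same eigenvalue(s) as the maximum of $|\lam|$ (both being nonnegative). Consequently the two epigraphs are related by a smooth reparametrization of the vertical axis, which is a local diffeomorphism precisely because $\rho(\wt X)>0$.

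Concretely, my plan is to introduce the map
\[
\Psi:\Cnn\times(0,\infty)\to\Cnn\times(0,\infty),\qquad \Psi(X,\beta):=(X,\beta^{2}/2),
\]
with inverse $\Psi^{-1}(X,\tilde\beta)=(X,\sqrt{2\tilde\beta})$. Since $\mu\ge\rho(\wt X)>0$ we may choose a neighborhood $U$ of $(\wt X,\mu)$ contained in $\Cnn\times(0,\infty)$ on which $\Psi$ is a $\mathcal{C}^{\infty}$-diffeomorphism onto $\Psi(U)$, a neighborhood of $(\wt X,\mu^{2}/2)$. Using that $\rho(X)\ge 0$, the chain of equivalences
\[
(X,\beta)\in\epi{\rho}\cap U\ \Longleftrightarrow\ \rho(X)\le\beta\ \Longleftrightarrow\ \rho(X)^{2}/2\le\beta^{2}/2\ \Longleftrightarrow\ \Psi(X,\beta)\in\epi{\rho_{2}}
\]
shows $\Psi\big(\epi{\rho}\cap U\big)=\epi{\rho_{2}}\cap\Psi(U)$.

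Next I would apply the standard invariance of the tangent and regular normal cones under a local $\mathcal{C}^{1}$-diffeomorphism between embedded sets (a direct consequence of the definitions \eqref{eqn: tangent cone defn} and \eqref{eqn: regular normal cone defn}; see e.g.\ \cite[Exercise 6.7]{rw:98a}). Writing $L:=\Psi'(\wt X,\mu)$, one has $L(V,\eta)=(V,\mu\eta)$ since $\partial_{\beta}(\beta^{2}/2)\big|_{\beta=\mu}=\mu$. Therefore
\[
T_{\epi{\rho_{2}}}(\wt X,\mu^{2}/2)\;=\;L\bigl(T_{\epi{\rho}}(\wt X,\mu)\bigr)\;=\;\{(V,\mu\eta)\,:\,(V,\eta)\in T_{\epi{\rho}}(\wt X,\mu)\},
\]
which, after solving for $(V,\eta)$, is exactly the first claim.

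For the regular normal cone I would take polars: since $\widehat N_{C}(\cdot)$ is the polar of $T_{C}(\cdot)$ under the real inner product $\ip{\cdot}{\cdot}$ on $\Cnn\times\R$, and since $L$ is an $\R$-linear isomorphism,
\[
\widehat N_{\epi{\rho_{2}}}(\wt X,\mu^{2}/2)\;=\;L^{-\adj}\bigl(\widehat N_{\epi{\rho}}(\wt X,\mu)\bigr).
\]
The real adjoint of $L(V,\eta)=(V,\mu\eta)$ is $L^{\adj}(W,\tau)=(W,\mu\tau)$, so $L^{-\adj}(W,\tau)=(W,\tau/\mu)$. Writing $(W,\tau)\in\widehat N_{\epi{\rho}}(\wt X,\mu)$ as $L^{\adj}$ applied to an element of $\widehat N_{\epi{\rho_{2}}}(\wt X,\mu^{2}/2)$ yields the second claim.

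\textbf{Main obstacle.} The only nontrivial step is justifying that $\Psi$ truly locally identifies $\epi{\rho}$ with $\epi{\rho_{2}}$ in a neighborhood, which requires restricting attention to $\beta>0$; this is where the hypothesis $\rho(\wt X)>0$ is essential, as the map $\beta\mapsto\beta^{2}/2$ fails to be a diffeomorphism at $\beta=0$. Once that is handled, the rest reduces to the functorial behavior of tangent and regular normal cones under diffeomorphisms, which is routine.
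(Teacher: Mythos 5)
Your proof is correct and follows essentially the same route as the paper: both rest on the identity $\rho_2=\rho^2/2$, the resulting local identification of the two epigraphs under the vertical reparametrization $(X,\beta)\mapsto(X,\beta^2/2)$ near $\beta=\mu>0$, and polarity to pass from the tangent cone to the regular normal cone. The paper merely carries out the tangent-cone step by the direct sequential argument of \cite[Lemma 7]{burke-lewis-overton:05a} (with polynomials replaced by matrices), where you instead invoke the standard diffeomorphism invariance of tangent cones; the content is the same.
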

\begin{proof} 
Recall the definitions of the tangent and regular normal cones given in \eqref{eqn: tangent cone defn} and 
\eqref{eqn: regular normal cone defn}, respectively.
The elementary proof of the tangent cone equality is identical to that in \cite[Lemma 7]{burke-lewis-overton:05a} 
with polynomials replaced by matrices and the root max function replaced by the spectral max function.
The regular normal cone expression follows by taking polars \eqref{eqn: regular normal cone defn}.
\end{proof}


\begin{theorem}[$\rho(\wt X)>0$]
\label{thm: new 7.2 - radius}
Let $\tX$ have Jordan form \eqref{eqn: P and J, 1} 
with
	$\rho(\tX) > 0$. 
Then 
	$Y\in\rsd \rho(\tX)$
if and only if $Y$ satisfies
	\eqref{W defn}-\eqref{eqn: Y and W, 2},
	\eqref{eqn: theorem 6.1 vanlsf}, 
and, for $j\in \Ical_\rho(\tX)$, 
the  diagonal entries $\theta_{j1}$ of $W_j$ satisfy 
	\begin{equation}
	\label{radius rsd 1} 
	\theta_{j1}/  \tlam_j   \in \R,  \ \ 
	\theta_{j1} /  \tlam_j  \geq 0, \ \mbox{ and } \
	 \mbox{$\sum_{j\in\Ical_\rho(\wt X)}$} n_j  \theta_{j1} | \tlam_j|  /   \tlam_j   =1,
	 \end{equation}
and, if in addition $m_j \geq 2$ (see \eqref{mj defn}), the  subdiagonal entries $\theta_{j2}$ of $W_j$ satisfy 
	\begin{equation}
	\label{radius rsd 2}
	\Realpart \left( \overline{ \theta_{j2} }    \tlam_j^2  \right) 
	\geq -  \theta_{j1}|\tlam_j|^2/  \tlam_j.  
	\end{equation}
%
Moreover,
	$Y \in \rsd\rho(\tX)^\infty$ if and only if
	$Y$ satisfies
	\eqref{W defn}-\eqref{eqn: Y and W, 2},
	$\theta_{j1}\!=\!0$ for all $j\in\{1, \dots m\}$, and, for 
	$j\!\in\! \Ical_\rho(\tX)$ with $m_j\!\geq\! 2$,
	$\Realpart( \overline{ \theta_{j2} }    \tlam_j^2 )\! \geq\! 0$.
Finally, $\rho$ is subdifferentially regular at $\tX$ if all active eigenvalues are nonderogatory.
\end{theorem}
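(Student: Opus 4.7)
The plan is to reduce the entire theorem to its counterpart for $\rho_2$, which falls directly under the scope of the paper's main subdifferential results since $r_2(\zeta) = |\zeta|^2/2$ is a smooth quadratic function with $\nabla r_2(\zeta) = \zeta$ and $\nabla^2 r_2 \equiv I$. The hypothesis $\rho(\tX) > 0$ ensures that $\Ical_\rho(\tX) = \Ical_{\rho_2}(\tX)$ and that every active eigenvalue $\tlam_j$ is nonzero, so $\nabla r_2(\tlam_j) = \tlam_j \neq 0$ gives \eqref{eq:non-zero continuous grad}, while \eqref{f-twice-diffable-and-psd} is automatic from $r_2$ being quadratic. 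Hence Theorems~\ref{thm:vp is subdifferentially regular}, \ref{thm: new 7.2 - derogatory} and \ref{thm: recession cone} all apply to $\rho_2$ at $\tX$.

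The bridge between the two functions is the scaling identity
\[
 \rsd \rho(\tX) = \rho(\tX)^{-1}\, \rsd \rho_2(\tX), \qquad \rsd\rho(\tX)^\infty = \rho(\tX)^{-1}\,\rsd\rho_2(\tX)^\infty,
\]
which I would derive from Lemma~\ref{lem: rho and rho2} with $\mu = \rho(\tX)$: the pair $(Y,-1)$ lies in $\widehat N_{\epi\rho}(\tX,\rho(\tX))$ precisely when $(\rho(\tX) Y,-1)$ lies in $\widehat N_{\epi\rho_2}(\tX,\rho_2(\tX))$, since $\widehat N$ is a cone; and positive scalar multiplication commutes with taking horizon cones. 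Setting $Y' := \rho(\tX) Y$, so that the Jordan-block entries transform as $\theta'_{jk} = \rho(\tX)\theta_{jk}$, Theorem~\ref{thm: new 7.2 - derogatory} applied to $\rho_2$ characterizes $Y' \in \rsd\rho_2(\tX)$ via \eqref{W defn}--\eqref{eqn: Y and W, 2} and \eqref{eqn: theorem 6.1 vanlsf}, together with $\sig_j := \theta'_{j1}/\tlam_j \ge 0$ satisfying $\sum_j n_j \sig_j = 1$, and $\Realpart(\overline{\theta'_{j2}}\tlam_j^2) \ge -\sig_j |\tlam_j|^2$ when $m_j \ge 2$ (using the computation $\eta_j = r_2''(\tlam_j;\sfi\tlam_j,\sfi\tlam_j) = |\tlam_j|^2$, valid because $\nabla^2 r_2 \equiv I$). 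Dividing each scalar condition by $\rho(\tX)$ and using $|\tlam_j| = \rho(\tX)$ for every $j\in\Ical_\rho(\tX)$ rewrites these statements as \eqref{radius rsd 1} and \eqref{radius rsd 2}. The recession cone formula is obtained identically, applying Theorem~\ref{thm: recession cone} to $\rho_2$ and translating via the same scaling.

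For the final regularity claim, suppose every active eigenvalue of $\tX$ is nonderogatory. Then Theorem~\ref{thm:vp is subdifferentially regular} yields subdifferential regularity of $\rho_2$ at $\tX$. On a neighborhood of $\tX$ one has $\rho = \phi \circ \rho_2$ with $\phi(t) := \sqrt{2t}$, which is $\mathcal{C}^\infty$ and strictly increasing near $\rho_2(\tX) > 0$; equivalently, the diffeomorphism $(X,\mu)\mapsto (X,\mu^2/2)$ locally identifies $\epi\rho$ with $\epi\rho_2$, and this is precisely what Lemma~\ref{lem: rho and rho2} records at the level of tangent and regular normal cones. Since subdifferential regularity is preserved under composition with a smooth strictly increasing scalar outer function (equivalently, under a smooth local diffeomorphism of the epigraph), it follows that $\rho$ is subdifferentially regular at $\tX$. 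The principal obstacle is the bookkeeping in the translation step---propagating the factor $\rho(\tX)$ cleanly through the block-Toeplitz conditions and recognizing that the normalizations involving $\sig_j$ and $\eta_j$ collapse to exactly the stated real-valued inequalities; once this is organized, the regularity transfer is a short consequence of the lemma together with a standard composition fact from \cite{rw:98a}.
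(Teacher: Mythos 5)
Your proposal is correct, and for the subdifferential and recession-cone formulas it takes a genuinely different route from the paper. The paper does \emph{not} reduce those formulas to $\rho_2$: it re-runs the argument of Theorem~\ref{thm: new 7.2 - derogatory} with $f=r$ itself, observing that the necessary-condition machinery (Theorems~\ref{theorems A B} and~\ref{thm: section 6}) needs only differentiability of $r$ on $\C\setminus\{0\}$ rather than \eqref{f-twice-diffable-and-psd}, computing $\eta_j = 1/|\tlam_j|$ from $\nabla r(\zeta)=\zeta/|\zeta|$ so that \eqref{eqn: sig 1} and \eqref{eqn: theta f' inner product - max} become \eqref{radius rsd 1} and \eqref{radius rsd 2}, and substituting the radius-specific polynomial subderivative formula of Remark~\ref{radius subderivative formula} for Theorem~\ref{subderivative general} in the sufficiency half; only the final regularity assertion is proved through $\rho_2$ and Lemma~\ref{lem: rho and rho2}, exactly as you do. Your alternative is sound: the scaling identity $\rsd\rho(\tX)=\rho(\tX)^{-1}\rsd\rho_2(\tX)$ follows from Lemma~\ref{lem: rho and rho2} with $\mu=\rho(\tX)$ because $\widehat N$ is a cone; the hypotheses \eqref{f-twice-diffable-and-psd} and \eqref{eq:non-zero continuous grad} do hold for $r_2$ when $\rho(\tX)>0$ (quadratic generator, $\nabla r_2(\tlam_j)=\tlam_j\neq 0$), so Theorems~\ref{thm: new 7.2 - derogatory} and~\ref{thm: recession cone} apply to $\rho_2$ as black boxes; and your translation, using $\eta_j=|\tlam_j|^2$ and $|\tlam_j|=\rho(\tX)$ for $j\in\Ical_\rho(\tX)$, reproduces \eqref{radius rsd 1}--\eqref{radius rsd 2} exactly, with $(cS)^\infty=cS^\infty$ for $c>0$ handling the recession cone. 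What your route buys is economy and uniformity: one reduction covers the subdifferential formula, the recession cone, and regularity, you never invoke Remark~\ref{radius subderivative formula}, and you never confront the failure of \eqref{f-twice-diffable-and-psd} for $r$. What the paper's direct route buys is content with independent value---it shows the matrix-theoretic necessary conditions hold for $r$ under mere differentiability and records the standalone subderivative formula for the polynomial radius---and it keeps the method aligned with the case $\rho(\tX)=0$ (Theorem~\ref{thm: rsd for rho at 0}), where your scaling reduction is unavailable since Lemma~\ref{lem: rho and rho2} and condition \eqref{eq:non-zero continuous grad} for $r_2$ both break down at the eigenvalue $0$.
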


\begin{remark}
\label{radius subderivative formula} 
The proof uses the following formula for the  subderivative of the polynomial radius when $p \in \Mcal{n}$ and $\sfr(p)>0$:
	\[
	\dee \sfr(\tilp)(v) 
	=
	\mbox{$\max_{j\in\Ical_\sfr(\tilp) }$}
	  \left \{ 
	 \left(\, \abs{\om_{j2}} - \Realpart ( \overline{ \tlam_j} \om_{j1}   ) \right)  /( \vert \tlam_j \vert n_j )
	  \right\},
	 \] 
where 
	$v=\sum_{j=1}^m \prod_{k\neq j} 
	\elem{n_k}{\tlam_k} 
	\sum_{s=1}^{n_j} 
	\om_{js}
	\elem{n_j-s}{\tlam_j}$,
provided that
	\( \om_{j2}    \in \cone( \tlam_j^2)   \)
	and
	\(  \om_{js}=0 \) for $s = 3, \dots, \ n_j $;
otherwise, 
	$d\sfr(\tilp)(v)=+\infty$.
\end{remark}

\begin{proof}
We first address the regular subdifferential formula.
We omit the full proof since it parallels the proof of Theorem~\ref{thm: new 7.2 - derogatory}.
We describe a few key ingredients specific to the radius.
First, the following formulas are needed (see \cite[Section 5]{burke-lewis-overton:05a})
	\begin{equation}
	\label{r gradient}
		\nabla r (\zeta)=\zeta/|\zeta| \quad \forall \ \zeta \in \C\setminus\{0\},
	\end{equation}
and
	\[
	 	r''(\zeta;\delta,\delta) = (1/|\delta|) \left(|\delta|^2 - \ip{ \zeta/|\zeta| }{\delta}_\C^2\right)
		\quad \forall \ \zeta, \delta \in \C\setminus\{0\}.
	\]
Applying these formulas to the quantity $\eta_j$ in \eqref{etaj defn} imply that
	\( \eta_j = 1 / | \tlam_j |, \)
	that display \eqref{eqn: sig 1} is equivalent to \eqref{radius rsd 1}, 
	and that display \eqref{eqn: theta f' inner product - max} is equivalent to \eqref{radius rsd 2}.
Second, note that the hypotheses of Theorems~\ref{theorems A B} and \ref{thm: section 6}, which are applied in the proof of
Theorem~\ref{thm: new 7.2 - derogatory},  do not require that $r$ satisfy \eqref{f-twice-diffable-and-psd}. 
They only require that $r$ be differentiable, which is the case on $\C\setminus\{0\}$. 
Finally, the explicit formula for the subderivative of $\sfr$ in Remark~\ref{radius subderivative formula} is utilized. 

Next, the formula for the recession cone of the regular subdifferential parallels that of 
Theorem~\ref{thm: recession cone}. 

Lastly, we discuss the subdifferential regularity in the case where $\tX$ has nonderogatory active eigenvalues and $\rho(\tX)>0$.
The map $r_2: \C \to \R$ satisfies \eqref{f-twice-diffable-and-psd} at $\lam$ for all $\lam \in \C$.
Therefore  $\rho_2$ is subdifferentially regular at $\tX$ by Theorem~\ref{thm:vp is subdifferentially regular}.
Using Lemma~\ref{lem: rho and rho2}, it is straightforward to establish the subdifferential regularity of $\rho$ (see \cite[Definitions 6.4 and 7.25]{rw:98a}---Clarke regularity of the epigraph is subdifferential regularity). 
\end{proof}

%

When
	$\rho(\tX) = 0$, $\wt X$ 
has a single eigenvalue, $0$, with algebraic multiplicity $n$. 
For this reason, we suppress the index ``$j$" in
	\eqref{W defn}-\eqref{eqn: Y and W, 2}.

\begin{theorem}[$\rho(\wt X)=0$]
\label{thm: rsd for rho at 0}
Let 
	$\wt X \in  \Cnn$ 
be such that
	$\rho(\wt X) = 0$.
Then 
	$Y \in \rsd \rho(\wt X)$ 
if and only if 
	$Y$ satisfies
	\eqref{W defn}-\eqref{eqn: Y and W, 2}
and
 	\begin{equation}
	\label{radius zero case diagonal}
	 	| \theta_1| \leq  1/n,
	\end{equation}
where $\theta_1$ is  the diagonal entry of $P^{-*}Y \wt P^*$.
Moreover, 
	$Y \in \rsd \rho(\wt X)^\infty$ if and only if 
$Y$ satisfies \eqref{W defn}-\eqref{eqn: Y and W, 2} and $\theta_1=0$. 
Finally, $\rho$ is subdifferentially regular at $\tX$  if $\tX$ is nonderogatory.  
\end{theorem}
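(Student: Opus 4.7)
The plan is to adapt the framework of Theorem~\ref{thm: new 7.2 - derogatory} to the case where the only active eigenvalue is $\tlam=0$, using condition \eqref{rspan is C} in place of \eqref{f-twice-diffable-and-psd}. The main obstacle is the absence of a single-valued gradient $\nabla r(0)$, which must be replaced throughout by the full subdifferential $\sd r(0)=\B$, the closed unit disk. When $\rho(\wt X)=0$, the matrix $\wt X$ has a single eigenvalue $0$ of algebraic multiplicity $n$, the active factor is $\tilp=\lam^n$, and $r$ satisfies \eqref{rspan is C} at $0$ since $\rspan(\B)=\C$. Specializing Theorem~\ref{Thm: Burke-Eaton 12} to this situation yields
\[
D_{\tilp}=\{0\}\times(-\B/n)\times\C\times\C^{n-2},
\qquad
D_{\tilp}^\infty=\{0\}\times\{0\}\times\C\times\C^{n-2},
\]
since $\Qcal(0)=-\cone(\B)+\sfi\,\rspan(\B)=\C$, and the associated polynomial-radius subderivative from Theorem~\ref{subderivative general} collapses to $\dee\sfr(\tilp)(v)=|\omega_{11}|/n$ when $\omega_{1s}=0$ for $s\ge 2$, and to $+\infty$ otherwise.

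For the necessary direction, Theorem~\ref{theorems A B} applies to any spectral function and supplies the block diagonal Toeplitz structure \eqref{W defn}-\eqref{eqn: Y and W, 2} of $W:=\wt P^{-*}Y\wt P^*$ for $Y\in\rsd\rho(\wt X)$. To derive the bound $|\theta_1|\le 1/n$, which plays the role of Theorem~\ref{thm: section 6}(b) here (as the hypothesis \eqref{eq:non-zero continuous grad} fails), I would test the regular subgradient inequality against the perturbation $X=\wt X+\beta I$ for small $\beta\in\C$: this yields $|\beta|=\rho(\wt X+\beta I)\ge\Realpart(\overline{\tr(Y)}\,\beta)+o(|\beta|)$, and since $\tr(Y)=\tr(W)=n\theta_1$ (because each of the $n$ diagonal entries of $W$ equals $\theta_1$), choosing $\beta=\epsilon\,\theta_1/|\theta_1|$ and letting $\epsilon\downarrow 0$ forces $|\theta_1|\le 1/n$.

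For sufficiency, I mimic the last half of the proof of Theorem~\ref{thm: new 7.2 - derogatory}: given $Y$ satisfying the block structure with $|\theta_1|\le 1/n$, expand $\ip{Y}{Z}_{\Cnn}=\ip{W}{V}_{\Cnn}=\sum_{\ell=1}^{m_1}\Realpart(\overline{\theta_\ell}(-\omega_{1\ell}))$ via Lemma~\ref{lem: 7.1}, writing $\omega_{1\ell}:=-\tr(N_{[1]}^{\ell-1}V_{11})$. Applying the chain rule (Theorem~\ref{thm: chain rule}) yields $\dee\rho(\wt X)(Z)\ge\dee\sfr(\tilp)(\Phi_n'(\wt X)Z)$; if the subderivative formula returns $+\infty$ the required inequality is trivial, and otherwise $\omega_{1s}=0$ for $s\ge 2$, so $\ip{Y}{Z}_{\Cnn}=\Realpart(\overline{\theta_1}(-\omega_{11}))\le|\theta_1|\,|\omega_{11}|\le|\omega_{11}|/n=\dee\sfr(\tilp)(\Phi_n'(\wt X)Z)\le\dee\rho(\wt X)(Z)$. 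The recession cone formula follows by the argument of Theorem~\ref{thm: recession cone}: since $0\in\rsd\rho(\wt X)$, membership of $tY$ in $\rsd\rho(\wt X)$ for all $t\ge 0$ forces $\theta_1=0$, while the remaining Toeplitz entries are unconstrained. Finally, subdifferential regularity when $\wt X$ is nonderogatory is obtained by applying the chain-rule framework of Theorem~\ref{thm:vp is subdifferentially regular} to $\rho=\sfr\circ F_{\tilp}\circ \Gee$, using the subdifferential regularity of $\sfr$ at $\tilp=\lam^n$ under \eqref{rspan is C} established in \cite{burke-eaton:12}.
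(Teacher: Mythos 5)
Your treatment of the regular subdifferential formula and of the recession cone is correct and is essentially the paper's own proof. The necessity test against $\wt{X}+\beta I$ with $\beta=\eps\,\theta_1/|\theta_1|$ is the same computation the paper performs with $Z=\theta_1 I$ and the subderivative (note $\wt{P}^{-1}(\theta_1 I)\wt{P}=\theta_1 I$, so the test direction is identical), and your sufficiency argument---chain rule, then $\Realpart\bigl(\overline{\theta_1}(-\om_{11})\bigr)\le|\theta_1|\,|\om_{11}|\le|\om_{11}|/n$---is the paper's, except that you specialize Theorem~\ref{subderivative general} to \eqref{rspan is C} at $0$ where the paper instead cites \cite[Theorem 11]{burke-lewis-overton:05a}; you correctly supply the $1/n$ normalization that is needed for the bound $|\theta_1|\le 1/n$ to come out. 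The recession-cone argument via $tY\in\rsd\rho(\wt{X})$ for all $t\ge 0$ is likewise what the paper sketches.

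The gap is in the final step. You justify regularity at nonderogatory $\tX$ by running the chain-rule framework of Theorem~\ref{thm:vp is subdifferentially regular} with outer function $\sfr$, ``using the subdifferential regularity of $\sfr$ at $\tilp=\elem{n}{0}$ under \eqref{rspan is C} established in \cite{burke-eaton:12}.'' No such general result exists: Theorem~\ref{Thm: Burke-Eaton 12} (and hence the regularity conclusion of Theorem~\ref{thm:vp is subdifferentially regular}, which it feeds) asserts regularity of the outer polynomial function only under \eqref{f-twice-diffable-and-psd}, and this restriction is not cosmetic. Regularity requires control of regular subgradients at all nearby polynomials, and \eqref{rspan is C} at $0$ does not persist under perturbation: any monic polynomial near $\elem{n}{0}$ other than $\elem{n}{0}$ itself has all of its active roots at nonzero points, where $r$ is differentiable with $r''(\lam;t\lam,t\lam)=0$, so neither \eqref{f-twice-diffable-and-psd} nor \eqref{rspan is C} holds there. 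This is exactly why the paper does not invoke the chain rule here; it proves $\sd\rho(\tX)\subset\rsd\rho(\tX)$ and $\sd^\infty\rho(\tX)\subset\rsd\rho(\tX)^\infty$ directly: take $Y^\nu\in\rsd\rho(X^\nu)$ with $X^\nu\to\tX$ nonderogatory, reduce to the case $\rho(X^\nu)>0$, apply Theorem~\ref{thm: new 7.2 - radius} at $X^\nu$, and pass to the limit using compactness of the simplex and of $\B$. Your route can be repaired: the input it actually needs is the radius-specific fact that $\sfr$ is subdifferentially regular at $\elem{n}{0}$, which is \cite[Theorem 7.3]{burke-eaton:12}---itself proved by the very limiting argument just described, not by a general \eqref{rspan is C} theorem. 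With that citation your chain-rule step is sound (the injectivity of $(H'(0,\tX))^\adj$ and the constraint qualification go through verbatim) and would even deliver $\sd\rho(\tX)$ and $\sd^\infty\rho(\tX)$ simultaneously; but as written, the regularity claim rests on a theorem that neither this paper nor \cite{burke-eaton:12} states.
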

\begin{proof} 
We first prove the regular subdifferential formula.
Note that the characteristic polynomial of $\tX$ is $\elem{n}{0}$.
We will use the fact that 
	\begin{equation}
	\label{eqn: iff for rsd}
	Y \in \rsd \rho(\wt X) \quad \mbox{ if and only if }\quad  \ip{Y}{Z} \leq \dee \rho(\wt X) (Z) \mbox{\ \  for all \  \ } Z\in  \Cnn .
	\end{equation}
Suppose 
	$Y \in  \Cnn $ 
satisfies 	\eqref{W defn}-\eqref{eqn: Y and W, 2}
and \eqref{radius zero case diagonal}.
By the chain rule \cite[Theorem 10.6]{rw:98a},
	\begin{equation}
	\label{eqn: chain rule - radius - 0}
		\dee \rho(\tX) (Z) 
		\geq \dee \sfr(\Phi_n( \tX))(	\Phi_n'(\tX)Z ) = \dee \sfr(\elem{n}{0})( 	\Phi_n'(\tX)Z  ) 
	\end{equation}
for all $Z \in \Cnn$.
By \cite[Theorem 11]{burke-lewis-overton:05a},
	\begin{equation}
	\label{eqn: subderivative sfr 0 - 2}
		\dee \sfr (\elem{n}{0}) ( \Phi_n'(\wt X)Z ) = \begin{cases}  \abs{\tr \left(V \right)}  /n
										& \mbox{ if }\tr  (N^{ s - 1} V  ) = 0 \mbox{ for } s=2,\dots, n \\
										+\infty & \mbox{otherwise},
							\end{cases}
	\end{equation}
where $V$ is given in \eqref{eqn: V defn for q} and $N\in\Cnn$ is the nilpotent matrix in \eqref{N sub bracket} (with $j$'s suppressed).
The inequality in   \eqref{eqn: iff for rsd} is trivially satisfied if 
	$\tr(N^{s-1}V) \neq 0$ 
for some 
	$s=2, \dots, n$ 
since in this case 
	$d\sfr(\elem{n}{0})(\Phi_n'(\tX)Z)$ 
is infinite, which implies  $d\rho(\tX)(Z)$ is infinite by \eqref{eqn: chain rule - radius - 0}.
So suppose  
	$\tr(N^{s-1} V) = 0$ 
for  
	$s=2, \dots, n$.
By
	\eqref{eqn: Y and W, 2}  
and
\eqref{eqn: theorem 6.1 vanlsf},
	\[
	\ip{Y}{Z}_{\Cnn} = \ip{W}{V}_{\Cnn} =  \sum_{\ell=1}^{n}  
	\Realpart
		 (
			\overline{ \theta_{  \ell }} \tr  ( N ^{\ell-1} V  )
		 )  , 
	\]
which simplifies to
	\[
	\Realpart
		 (
			\overline{ \theta_{  1 }} \tr  ( V  )
		 )   
		 = 		 
			\ip{  \theta_{  1 }}{ \tr  ( V  )}_\C.
	\]
Therefore,
	\begin{align*}
	\ip{Y}{Z}_{\Cnn}  
	&= \ip{\theta_1}{\tr(V)}_\C  \\
	& \leq \max_{\zeta \in \B}  \ip{\zeta/n}{\tr(V)}_\C   &&\mbox{by \eqref{radius zero case diagonal}}\\
	&=  \abs{\tr(V)}/n  \\
	&  = \dee \sfr(\elem{n}{0})(q(\lam)) &&  \mbox{by \eqref{eqn: subderivative sfr 0 - 2}} \\
	&   \leq \dee \rho(\wt X)(Z) &&\mbox{by \eqref{eqn: chain rule - radius - 0}}.
	\end{align*}
Condition \eqref{eqn: iff for rsd}  applied to the inequality above implies
 	$Y \in \rsd \rho(X)$.
 
Now suppose 
	$Y\in \rsd \rho(\wt X)$. 
By Theorem~\ref{theorems A B}, $Y$ satisfies \eqref{W defn}-\eqref{eqn: Y and W, 2}.
Choose 
	$Z:=\wt{P}^{-1}V\wt{P}$
with
	$V := \theta_1 I$.
By the definition of subderivative in \eqref{eqn: subderivative defn} (using $\rho(\wt X)=0$),
	\begin{align*}
	\dee \rho(\wt X)(Z) 
	&  = \liminf_{t \downarrow 0, \widetilde{Z}\to Z}  \rho(\wt X+t\widetilde{Z}  ) / t    \\
	&  \leq  \liminf_{t \downarrow 0}   \rho(\wt X+t Z ) /t   \\
	& = \liminf_{t \downarrow 0}   \rho(N+t V ) / t,   && \mbox{where $N$ is given in \eqref{N sub bracket} }\\
	&  = \liminf_{t \downarrow 0}   t\abs{ \theta_1}  / t,   && \mbox{since the only eigenvalue of $N+tV$ is $t\theta_1$, }  \\
	& =  \abs{\theta_1}. 
	\end{align*}
By combining \eqref{eqn: iff for rsd} and the inequality above,
	\[	\abs{\theta_1} 		  \geq \dee p(\wt X)(Z) 
						    \geq  \ip{Y}{Z}_{\Cnn}
						   = \langle {\wt P^{-*} Y \wt P^* },{V}   \rangle_{\Cnn}
						   = \Realpart( \overline{\theta_1} \tr(V) )  
						   = n \abs{\theta_1}^2,
	\]
which implies $1/n\geq\abs{\theta_1}$, giving \eqref{radius zero case diagonal}.

The formula for the recession cone of the regular subdifferential parallels that of 
Theorem~\ref{thm: recession cone}. 

Finally, we show that nonderogatory active eigenvalues are sufficient for regularity.
Suppose 
	$Y\in\sd\rho(\tX)$. 
Then, by Theorem \ref{theorems A B},
	\begin{equation}
	\label{radius Y}
	{\wt P^{-*}}Y{\wt P^{*}}=\sum_{\ell=1}^n\theta_\ell(N^*)^{\ell -1},
	\end{equation}
that is, $Y$ is lower triangular Toeplitz and has only one eigenvalue, $\theta_1$.
By definition of subgradient \eqref{general-sd-formula},
there exists 
	$\ds X^\nu \toarrow^\nu \wt X$ 
and 
	$Y^\nu \in \rsd\rho(X^\nu)$ 
with 
	$\ds Y^\nu\toarrow^\nu Y$.  
Since the set of nonderogatory matrices is open in $\Cnn$, we may without loss in generality assume that $X^\nu$ is nonderogatory
 for all $\nu$.	
If there is an infinite subsequence with 
	$\rho(X^\nu)=0$, 
then 
	$X^\nu = \wt X$ 
along this subsequence and there is nothing more to show.
So suppose 
	$\rho(X^\nu)>0$ 
for all $\nu$. Since there are only finitely many partitions of $n$, 
we may assume (by passing to a subsequence if necessary) that the distinct eigenvalues of 
	$X^\nu$ are  
	$\lam_1^\nu, \dots, \lam_m^\nu$ 
with multiplicities 
	$n_1, \dots, n_m$ 
with 
	$\lam_j^\nu \to 0$ 
for 
	$j=1, \dots, m$
and that the set of active indices 
	$\wt\Ical:=\Ical_\rho(X^\nu)$ 
is constant with respect to $\nu$. 
By Theorem~\ref{thm: new 7.2 - radius}, 
there exist nonsingular $P^\nu$ for all $\nu$ such that the diagonal of
	$(P^\nu)^{-*} Y^\nu (P^\nu)^* $ 
equals 
	$(\theta_{11}^\nu, \dots, \theta_{1n_1}^\nu, \dots, 
	\theta_{m1}^\nu, \dots, \theta_{mn_m}^\nu)$,   
which satisfy
	\begin{gather*}
		\theta_{j1}^\nu/  \lam_j^\nu  \!\in\! \R, 
		\    \theta_{j1} /  \lam_j^\nu  \geq 0, 
		 \mbox{ for }j \!\in\! \wt\Ical, 
		\  \
		\sum_{j\in\wt\Ical} n_j   \theta_{j1}^\nu \abs{ \lam_j^\nu}  /   \lam_j^\nu   \!=\!1, 
		\ \mbox{ and }  
		\theta_{j1}^\nu \!=\! 0
		\mbox{ for } j \!\notin\! \wt\Ical.
	\end{gather*}
Recall from \eqref{r gradient} that $\nabla r (\lam) = \lam/|\lam|$. 
By the compactness of $\B$, we may assume that there exist
	$\phi_j \in \B$ 
such that 
	$\nabla r (\lam_j^\nu) \to \phi_j$ for $j\in \wt \Ical$. 
The remainder of this proof follows that  of \cite[Theorem 7.3]{burke-eaton:12}.
Rewrite the summation in the previous displayed equation as
	\[
		\mbox{$\sum_{j\in\wt\Ical}$} \  n_j    \theta_{j1}^\nu/ \nabla r(\lam_j^\nu) = 1
    	\]
using \eqref{r gradient}. Define 
	\[
	 \gam_j^\nu := \begin{cases}  n_j    \theta_{j1}^\nu / \nabla r (\lam_j^\nu) , &\mbox{if } j\in \wt \Ical, \\
									0, &\mbox{if } j\notin\wt\Ical.\end{cases}
	\]
Then 
	$(\gam_1^\nu, \dots, \gam_m^\nu) \in \Delta^{m-1}$, the simplex defined in \eqref{simplex}.  
Since $\Delta^{m-1}$ is compact,
and by passing to a subsequence if necessary, there exists 
	$(\gam_1, \dots, \gam_m)\in \Delta^{m-1}$ such 
that
	$(\gam_1^\nu, \dots, \gam_m^\nu) \to (\gam_1, \dots, \gam_m)$.
The eigenvalues of $Y^\nu$ are 
	$\theta_{11}^\nu, \dots, \theta_{m1}^\nu$ 
with multiplicities 
	$n_1, \dots, n_m$, 
and they converge
to $\theta_1$, the only eigenvalue of $Y$.
Therefore, 
	\[
		\lim_{\nu \to \infty} \gam_j^\nu \nabla r (\lam_j^\nu)  = \gam_j \phi_j =   n_j \theta_1
	\]
for $j=1, \dots, m$.
Summing over $j$, we have
	\[
		\mbox{$\sum_{j=1}^m$} \gam_j \phi_j = 	\mbox{$\sum_{j=1}^m$}  n_j \theta_1 = \theta_1 \mbox{$\sum_{j=1}^m$} n_j = \theta_1 n .
	\]
Since 
	$(\gam_1,\dots,\gam_m)\in\Delta^{m-1}$, 
and 
	$\phi_j \in \B$ for all $j$,  
	\[ 
		|\theta_1| n 
		=  \left| \mbox{$\sum_{j=1}^m$} \gam_j \phi_j  \right| 
		\leq     \mbox{$\sum_{j=1}^m$} \gam_j |\phi_j| 
		\leq \mbox{$\sum_{j=1}^m$} \gam_j = 1,
	\] 
which implies 
	$|\theta_1|  \leq 1/n$.

Now suppose $Y\in\sd^\infty\rho(\tX)$.
By Theorem~\ref{theorems A B}, representation \eqref{radius Y} holds.
By definition of horizon subgradient \eqref{horizon-subdifferential},
there exist 
	$\ds X^\nu \toarrow^\nu \wt X$, 
	$s^\nu \downarrow 0$ 
and 
	$Y^\nu \in \rsd\rho(X^\nu)$ 
with 
	$\ds s^\nu Y^\nu\toarrow^\nu Y$.  
Set 
	$V :=  \theta_1 I$ 
and  
	$Z := \wt P^{-1} V \wt P$ . 
Computing the inner product,
	\begin{equation}
	\label{ip zero}
		\langle Y, (\wt X + tZ)-\wt X \rangle_{\Cnn} 	 
					   = t \ip{Y}{Z}_{\Cnn}  
					   = t\ip{W}{V}_{\Cnn}
					   =  t n \abs{\theta_1}^2  .
	\end{equation}
Since $Y^\nu \in \rsd\rho(X^\nu)$, 
	\[
		\rho(X^\nu  + tZ)  -  \rho(X^\nu) 
	 	 \geq 
			\ip{Y^\nu }{(X^\nu + tZ) - X^\nu}_{\Cnn}  +  o(||tZ||),
	\]
for all $t>0$, which implies
	\[
		s^\nu(\rho(X^\nu  + tZ)  -  \rho(X^\nu)) 
			  \geq  \ip{s^\nu Y^\nu\!}{(X^\nu  +  tZ)  -  X^\nu}_{\Cnn}  +  s^\nu o(||tZ||)
	\]		
for all $t>0$. Taking the limit as $\nu\to\infty$ gives
	\begin{align*}
		0    	& \geq  	\langle{Y},{(\wt X  +  tZ)  -  \tX}\rangle_{\Cnn}, &&\mbox{which, by \eqref{ip zero}, implies} \\
		0 	& \geq	t n \abs{\theta_1}^2     \geq 0, 	
	\end{align*}
that is, $\theta_1=0$.
\end{proof}

\begin{theorem}[Derogatory active eigenvalues]
Suppose 
	$\wt X \in\Cnn$ 
has a derogatory active eigenvalue.
Then 
	$\sd\rho(\tX) \supsetneq \rsd\rho(\tX)$.
\end{theorem}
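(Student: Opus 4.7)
The plan is to split into two cases based on whether $\rho(\wt X)>0$ or $\rho(\wt X)=0$. When $\rho(\wt X)>0$, every active eigenvalue $\tlam_j$ satisfies $|\tlam_j|=\rho(\wt X)>0$ and so lies in $\C\setminus\{0\}$, where the modulus $r(\zeta)=|\zeta|$ is continuously differentiable with nonzero gradient $\tlam_j/|\tlam_j|$. Thus $r$ satisfies \eqref{eq:non-zero continuous grad} at every active eigenvalue, and Lemma~\ref{thm: new 8.2} applies directly to yield $\sd\rho(\wt X)\supsetneq\rsd\rho(\wt X)$.

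When $\rho(\wt X)=0$, the origin is the only eigenvalue of $\wt X$, and by assumption it is derogatory, so its geometric multiplicity $q\geq 2$. The strategy mirrors the proof of Lemma~\ref{thm: new 8.2}: produce a sequence $X^\nu\to\wt X$ carrying regular subgradients $Y^\nu\in\rsd\rho(X^\nu)$ whose common limit violates the block structure required by Theorem~\ref{thm: rsd for rho at 0}. Let $m_1$ denote the size of the first Jordan sub-block $J_1^{(1)}$ of $\wt X$ and let $E\in\Cnn$ have ones on the $m_1$ diagonal positions corresponding to that sub-block and zeros elsewhere. For a sequence $\beta^\nu\downarrow 0$ of positive reals, set $X^\nu:=\wt P^{-1}(J+\beta^\nu E)\wt P$. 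Then $X^\nu\to\wt X$ has distinct eigenvalues $\beta^\nu$ (active, nonderogatory, with multiplicity $m_1$) and $0$ (inactive, with multiplicity $n-m_1$), and $\rho(X^\nu)=\beta^\nu\to 0=\rho(\wt X)$.

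Applying Theorem~\ref{thm: new 7.2 - radius} to $X^\nu$, the active block forces $\theta_{11}^\nu=1/m_1$ through \eqref{radius rsd 1} (since $\nabla r(\beta^\nu)=1$), while \eqref{eqn: theorem 6.1 vanlsf} zeros the inactive block; thus $Y^\nu:=(1/m_1)\wt P^{*}E\wt P^{-*}$ lies in $\rsd\rho(X^\nu)$. This sequence is constant in $\nu$, so its limit $Y$ belongs to $\sd\rho(\wt X)$ by the definition \eqref{general-sd-formula}. On the other hand, $\wt P^{-*}Y\wt P^{*}=(1/m_1)E$ has the diagonal entry $1/m_1$ in the first sub-block corresponding to $0$ but $0$ in the remaining $q-1\geq 1$ sub-blocks, contradicting the requirement from \eqref{eqn: Y and W, 2} (invoked via Theorem~\ref{thm: rsd for rho at 0}) that all sub-blocks associated with a single eigenvalue share a common diagonal entry $\theta_1$. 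Hence $Y\notin\rsd\rho(\wt X)$, and the strict inclusion is established.

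The principal subtlety is Case 2, where Lemma~\ref{thm: new 8.2} is unavailable because $r$ fails \eqref{eq:non-zero continuous grad} at $0$; restricting to $\beta^\nu$ positive real stabilizes $\nabla r(\beta^\nu)=1$ so that the regular subgradients $Y^\nu$ are constant in $\nu$, and the limiting subgradient $Y$ lands squarely outside $\rsd\rho(\wt X)$ by failing the uniform diagonal condition across the $q\geq 2$ Toeplitz sub-blocks.
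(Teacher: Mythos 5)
Your proposal is correct in substance and follows the paper's own proof almost exactly: the same split into $\rho(\wt X)>0$ and $\rho(\wt X)=0$, and, in the latter case, the same perturbation $X^\nu=\wt P^{-1}(J+\beta^\nu E)\wt P$, the same candidate subgradient certified at $X^\nu$ by Theorem~\ref{thm: new 7.2 - radius}, and the same violation of the common-diagonal requirement in \eqref{eqn: Y and W, 2} to conclude that the limit lies outside $\rsd\rho(\wt X)$. Your Case 2 is complete and careful: you record that $\rho(X^\nu)\to\rho(\wt X)$, as \eqref{general-sd-formula} requires, and your form $Y^\nu=(1/m_1)\wt P^{*}E\wt P^{-*}$ is the one consistent with \eqref{W defn} (i.e.\ $Y=\wt P^{*}W\wt P^{-*}$ when $W=\wt P^{-*}Y\wt P^{*}$), matching the convention used in the proof of Lemma~\ref{thm: new 8.2}.

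The one soft spot is Case 1, where you assert that Lemma~\ref{thm: new 8.2} ``applies directly.'' Its stated hypothesis \eqref{eq:non-zero continuous grad} is indeed satisfied by $r$ at nonzero active eigenvalues, but the \emph{proof} of that lemma certifies the regular subgradients at the perturbed matrices $X^\nu$ by invoking Theorem~\ref{thm: new 7.2 - derogatory}, whose hypothesis \eqref{f-twice-diffable-and-psd} the modulus fails identically: $r''(\lam;t\lam,t\lam)=0$ for all $\lam\ne 0$, as the paper notes at the start of Section~\ref{sec: radius}. So, read as a black box, the lemma has not actually been established for $f=r$, and this is precisely why the paper writes ``see the proof of'' rather than citing the lemma as a result: one must rerun its argument with Theorem~\ref{thm: new 7.2 - radius} (applicable since $\rho(X^\nu)\ge|\lam^\nu|>0$) supplying the membership $\bigl(\nabla r(\lam^\nu)/m_{j1}\bigr)\wt P^{*}E\wt P^{-*}\in\rsd\rho(X^\nu)$ --- exactly the substitution you yourself perform in Case 2. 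The repair is routine and changes nothing structural, but as written your Case 1 rests on a citation whose proof does not cover the spectral radius.
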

\begin{proof} Let $\tX\in\Cnn$. For the case $\rho (\tX)>0$, see the proof of Theorem \ref{thm: new 8.2}. 
Suppose $\rho(\tX)=0$ and that the eigenvalue $0$ is derogatory.
Let $\eps^\nu \downarrow 0$ and set
	\( X^\nu := \wt P(J + \eps^\nu E)\wt P^{-1}, \)
where $J$ and $\wt P$ are defined in \eqref{eqn: P and J, 1}  
(with the index $j$ suppressed and where the matrix $\wt B$ does not appear) 
and $E$ is one in the first $m_1$ diagonal positions corresponding to block $J_1$ and is zero elsewhere. 
The matrix $X^\nu$ has exactly one active eigenvalue, $\eps^\nu$, with multiplicity $m_1$, and 
	\( \wt P(J + \eps^\nu E) \wt P^{-1} \) 
is the Jordan form of $X^\nu$.
 By Theorem \ref{thm: new 7.2 - radius}, $\rsd\rho(X^\nu)$ includes the matrix
	\( (1/m_1) \wt P^{-*} E \wt P^* \)
for all $\nu$. 
Therefore, 
	\(  (1/m_1) \wt P^{-*} E \wt P^* \in \sd\rho(\tX) . \) 
Yet by Theorem~\ref{theorems A B},
	\( (1/m_1) \wt P^{-*} E \wt P^* \notin \rsd\rho(\tX) \)
since its diagonal entries are zero for blocks $m_2$ to $m_q$ but nonzero for block $m_1$, i.e. the diagonal entries are not equal.
Therefore, 
	$\sd\rho(\tX) \supsetneq \rsd\rho(\tX)$ 
if $\tX$ has a derogatory active eigenvalue. 
\end{proof}

\begin{theorem}
The spectral radius is subdifferentially regular at $\tX\in\Cnn$ if and only if all active eigenvalues of $\tX$ are nonderogatory.
\end{theorem}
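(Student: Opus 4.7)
The plan is to assemble this as an immediate consequence of the three preceding radius-specific theorems, splitting along whether $\rho(\wt X)>0$ or $\rho(\wt X)=0$ and treating the ``if'' and ``only if'' directions separately.

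For the ``if'' direction, suppose every active eigenvalue of $\wt X$ is nonderogatory. If $\rho(\wt X)>0$, the final assertion of Theorem~\ref{thm: new 7.2 - radius} gives subdifferential regularity of $\rho$ at $\wt X$ directly. If $\rho(\wt X)=0$, then $0$ is the only eigenvalue of $\wt X$ and it is active by definition; the hypothesis that all active eigenvalues are nonderogatory then means $\wt X$ itself is nonderogatory, so the final assertion of Theorem~\ref{thm: rsd for rho at 0} applies and yields subdifferential regularity.

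For the ``only if'' direction, I argue by contrapositive: assume $\wt X$ has a derogatory active eigenvalue and conclude that $\rho$ is not subdifferentially regular at $\wt X$. This is precisely the content of the theorem immediately preceding the statement, which produces an element of $\sd \rho(\wt X)\setminus \rsd \rho(\wt X)$ (handled in two cases $\rho(\wt X)>0$ and $\rho(\wt X)=0$ there via the construction $X^\nu=\wt P(J+\eps^\nu E)\wt P^{-1}$). Since subdifferential regularity requires $\sd \rho(\wt X)=\rsd \rho(\wt X)$, this strict containment rules out regularity.

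The two directions together give the stated equivalence. The only thing to verify carefully is that in the $\rho(\wt X)=0$ case, ``all active eigenvalues are nonderogatory'' is the same as ``$\wt X$ is nonderogatory,'' which holds because $0$ is the unique (hence unique active) eigenvalue; I do not anticipate any real obstacle here, as all technical work has already been done in Theorems~\ref{thm: new 7.2 - radius} and~\ref{thm: rsd for rho at 0} and in the preceding derogatory-eigenvalue theorem.
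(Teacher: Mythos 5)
Your proposal is correct and follows exactly the route the paper intends: the theorem is stated as an immediate consequence of the final assertions of Theorems~\ref{thm: new 7.2 - radius} and~\ref{thm: rsd for rho at 0} (the ``if'' direction, split on $\rho(\wt X)>0$ versus $\rho(\wt X)=0$) together with the preceding derogatory-eigenvalue theorem (the ``only if'' direction via the strict containment $\sd\rho(\wt X)\supsetneq\rsd\rho(\wt X)$). Your careful check that, when $\rho(\wt X)=0$, ``all active eigenvalues nonderogatory'' coincides with ``$\wt X$ nonderogatory'' is the only gluing step needed, and it is handled correctly.
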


\subsection{Examples}
\label{examples}

First consider the matrix
\[
		A =  \left[\begin{array}{ccc}
				1 & 1 & 0 \\
				0 & 1 & 0 \\
				0  & 0 & -1
				\end{array}\right], 
	\] 
which has two nonderogatory active eigenvalues, $1$ and $-1$. 
By Theorem~\ref{thm: new 7.2 - radius},
$Y \in \sd \rho(A)$ if and only if there exist $\theta_{11}, \theta_{12}, \theta_{21} \in \C$ such that
	\[
	Y =  
 		\left[ 
		\begin{array}{c c c}
		 \theta_{11} &  0 			& 0  \\ 
		\theta_{12} & \theta_{11}	& 0 \\
				0  &     		0	& \theta_{21} \\
		\end{array} \right] ,
	\]
where
	\( \Realpart(   \theta_{12}    ) \geq -  \theta_{11}\), \ \
	\(  \theta_{11}  \in [0,\infty)\), \ \
	 \(\theta_{21}   \in (-\infty,0]\),  \ \
and 
	  \(	2\theta_{11}  - \theta_{21} = 1\).
Second consider the matrix\footnote{The matrix $B-I$ is in the family of matrices considered in \cite{grundel:13} for the 
spectral abscissa.} 
	\[		B=  \left[\begin{array}{ccc}
				1 & 1 & 0 \\
				0 & 1 & 0 \\
				0  & 0 & 1
				\end{array}\right].
	\] 
By Theorem~\ref{thm: new 7.2 - radius},
$Y \in \rsd \rho(B)$ if and only if  
	\begin{equation}
	\label{eqn: der case ex}
	Y =  \frac{1}{3}I +
		\left[ 
		\begin{array}{c c c}
		0 & 0 & 0  \\ 
		\theta & 0& 0 \\
		0  &0	& 0 
		\end{array} \right],
	\end{equation}
where
	\( \Realpart(   \theta     ) \geq - 1/3 \). 
Next consider the sequence of matrices 
	\[ B^\nu := B + \Diag(0,0,1/\nu) \]
for 
	$\nu \in \mathbb{N}$.
The only active eigenvalue of $B^\nu$ is $(1+1/\nu)$, which is nonderogatory.
By Theorem~\ref{thm: new 7.2 - radius}, 
	$ M:=\Diag(0,0,1) \in \rsd \rho (B^\nu)$ 
for all $\nu$, 
which implies 
	$M\in \sd\rho(B)$, 
yet 	
	$M \notin\rsd\rho(B)$ 
since it does not satisfy the form given in \eqref{eqn: der case ex}, i.e.  the diagonal entries are not equal.

\section{Summary}
This paper extends the variational results for the spectral abscissa mapping in \cite{burke-overton:01b}
to convexly generated spectral max functions. 
Two very different methods of analysis are applied.
The first uses the Arnold form (\S\ref{sec: arnold}) and tools from \cite{burke-lewis-overton:01a}. 
A nonsmooth chain rule is applied to the composition of the characteristic polynomial mapping
and a max root function for polynomials \eqref{diagram}.
The subdifferential theory for polynomial max root functions is developed in \cite{burke-eaton:12},
which in turn builds on the work in \cite{burke-lewis-overton:04a,burke-lewis-overton:05a}. 
The key technical breakthroughs for the first approach appear in \S\ref{subsec:char poly} culminating in Theorem \ref{thm: G is injective},
which describes the variational behavior of the mapping $G$ taking a matrix to its distinct active monomial factors on the space $\Scal_{\tilp}$.
This yields our first main result, Theorem~\ref{thm:vp is subdifferentially regular},
which gives a formula for the subdifferential and establishes subdifferential regularity when all active eigenvalues are nonderogatory.

The major drawback to Theorem~\ref{thm:vp is subdifferentially regular} 
is the requirement that all of the active eigenvalues be nonderogatory. 
It is this hypothesis that gives access to our polynomial results through Theorem \ref{thm: G is injective}. 
Our second line of attack avoids the polynomial results by appealing directly to underlying matrix structures. 
This approach extends results in \cite{burke-overton:01b} to convexly generated spectral max functions when possible. In Theorem \ref{thm: new 7.2 - derogatory}, we characterize the regular subgradients of a convexly generated spectral max
function without assuming nonderogatory active eigenvalues.
In addition, Lemma \ref{thm: new 8.2}  shows that nonderogatory
active eigenvalues are necessary for subdifferential regularity. Combined with our earlier results, 
we obtain Theorem \ref{main result}, which shows that subdifferential regularity 
occurs if and only if all active eigenvalues are nonderogatory. This neatly extends the 2001 result of
Burke and Overton for the spectral abscissa \cite{burke-overton:01b} to the class
of convexly generated spectral max functions satisfying \eqref{f-twice-diffable-and-psd}.

The results are applied in Section \ref{sec: radius} to 
obtain new variational properties for the spectral radius mapping.

\bibliographystyle{plain}

\end{document}